\newtheorem{remark}{Remark}[section]
\newtheorem{definition}{Definition}[section]
\newtheorem{proposition}{Proposition}[section]
\newtheorem{lemma}{Lemma}[section]
\newtheorem{theorem}{Theorem}[section]
\date{}
\begin{document}
\title{Existence of free boundaries for some overdetermined-value problems}
\author{Mohammed Barkatou
}
\date{}
\maketitle{
\begin{abstract}
The aim of this paper is first to give necessary and sufficient condition of existence (of free boundaries) for both Laplacian and bi-Laplacian operators in the case where the overdetermined condition is not constant. Second, by using some classical inequalities, we get existence result for several (other) overdetermined free boundary problems.
\end{abstract}

\maketitle {\small {\noindent {\bf Keywords:} Functional inequalities, free boundaries, Laplacian, Bi-Laplacian, maximum principle, mean curvature, shape derivative, shape optimization, quadrature surfaces problem.

\noindent {\bf 2010 Mathematics Subject Classification:} 35A15, 35J65, 35B50

\section{Introduction}
Assuming throughout that: $D\subset \mathbb{R}^{N}\quad (N=2,\;3)$
is a hold-all which contains all the domains we use. If
$\Omega$ is an open subset of $D$, let $\nu$ be the outward normal
to $\partial\Omega$ and let $\left| \partial \Omega \right|$
(respectively $\left| \Omega \right|$) be the perimeter
(respectively the volume) of $\Omega$. Let $g$ and $f$ be positive functions  in $L^2(\mathbb{R}^{N})$ such that $f$ has a compact support
with nonempty interior. Denote by $C_f$ the convex hull of $\text{supp}f$.\\
We are interested by proving the existence of a solution for the two following overdetermined free boundary value problems.
\begin{enumerate}
  \item $\mathcal{QS}(f,g)$: Find $\Omega\subset D$ which strictly contains $C_f$ such that
  \begin{equation*}
\left\{
\begin{array}{c}
-\Delta u_{\Omega}=f \quad \text{in  }\Omega\,, \\
u_{\Omega}=0\text{ on  }\partial \Omega\,, \\
 -\frac{\partial u_{\Omega}}{\partial \nu }=g\text{  on  }\partial
 \Omega.
\end{array}
\right.
\end{equation*}
  \item $\mathcal{B}(f,g)$: Find $\Omega\subset D$ which strictly contains $C$ such that
  \begin{equation*}
\left\{
\begin{array}{c}
\Delta^2 v_{\Omega}=f \quad \text{ in  }\Omega\,,\\
 v_{\Omega}=\Delta v_{\Omega}=0\text{  on  }\partial\Omega\,, \\
-\frac{\partial(\Delta v_{\Omega})}{\partial \nu }\frac{\partial v_{\Omega}}{\partial \nu }=g\text{  on
}\partial\Omega.

\end{array}
\right.
\end{equation*}
\end{enumerate}

Notice that since $u_{\Omega}$ (resp. $v_{\Omega}$) vanishes on $\partial\Omega$ then
$-\frac{\partial u_{\Omega}}{\partial \nu }=|\nabla u_{\Omega}|$
($-\frac{\partial v_{\Omega}}{\partial \nu }=|\nabla v_{\Omega}|$).\\
The problem $\mathcal{QS}(f,g)$ is called the quadrature surfaces free
boundary problem and arises in many areas of physics (free
streamlines, jets, Hele-show flows, electromagnetic shaping,
gravitational problems etc.) It has been intensively studied from
different points of view, by several authors. For more details
about the methods used for solving this problem see the
\cite[Introduction]{gs}. Imposing boundary conditions for both $u_{\Omega }$ and $|\nabla
u_{\Omega }|$ on $\partial\Omega$ makes problem $\mathcal{QS}(f,g)$
\textit{overdetermined}, so that in general without any
assumptions on data this problem has no solution. When $g=k=constant$, Gustafsson and Shahgholian \cite{gs}
conclude their paper by giving \cite[Theorem 4.7]{gs} the
following sufficient condition: If $\text{Supp}f\subset B_R$ and
if $\int_{B_R}f(x)dx>(\frac{6^NNc}{3R}|B_R|)$ with
$B_{3R}\subset\Omega$ ($B_R$ being some ball of radius $R$) then
$\mathcal{QS}(f,g)$ has a solution. The method used by Gustafsson and
Shahgholian goes back to K. Friedrichs \cite{fr}, or even to T. Carleman \cite{ca}, and was
considerably developed by H. W. Alt and L. A. Caffarelli
\cite{ac}. In \cite{he}, in order to get solutions to Problem $\mathcal{QS}(f,g)$, A. Henrot used the method of subsolutions and supersolutions
introduced by A. Beurling \cite{be}. By combining the maximum principle to the
compatibility condition of the Neumann problem, Barkatou et al.
\cite{ba2} gave, $|\nabla u_{C_f}|>k\;\text{on}\;\partial C_f$ as a sufficient condition of existence for
$\mathcal{QS}(f,g)$. Later, Barkatou \cite{ba1} showed that this problem admits a solution if and only if
the condition $\int_{C_f} f(x)dx>k|\partial C_f|$ is valid.\\

The problem $\mathcal{B}(f,g)$ is equivalent to the following system $\mathcal{S}(f,g)$
\begin{equation*}
\left\{
\begin{array}{c}
-\Delta u_{\Omega}=f \quad \text{ in  }\Omega\,,
 u_{\Omega}=0\text{  on  }\partial\Omega \,, \\
-\Delta v_{\Omega}=u_{\Omega} \quad \text{ in  }\Omega\,,
v_{\Omega}=0 \text{ on  }\partial \Omega \,,
\\
\frac{\partial u_{\Omega}}{\partial \nu }\frac{\partial v_{\Omega}}{\partial \nu }=g\text{  on
}\partial\Omega.

\end{array}
\right.
\end{equation*}
The system $\mathcal{S}(1,c)$ arises from variational problem in
Probability \cite{fm}. Fromm and McDonald \cite{fm} related
this problem to the fundamental result of Serrin. Then using the
moving plane method combining with Serrin's boundary point Lemma,
they showed that if this problem admits a solution $\Omega$ then
it must be an $N$-ball. Huang and Miller \cite{hm} established the
variational formulas for maximizing the functionals (they
considered) over $C^{k}$ domains with a volume constraint and
obtained the same symmetry result for their maximizers. The existence for $\mathcal{S}(f,g)$ was first studied in \cite{kb} in the case where $g=c=constant$. By using the maximum principle, the authors showed that
if $|\nabla u_{C_f}||\nabla v_{C_f}|>c\text{  on }\partial C_f$, then
this problem has a solution. In \cite{ba3}, the author get the same result if $c|\partial C_f|^{2}<\int_{C_f}f\int_{C_f}u_{C_f}$. The aim here is to give a necessary and sufficient condition of
existence for the problems $\mathcal{QS}(f,g)$ and $\mathcal{B}(f,g)$ when $g$ is not constant. After, we will use sufficient conditions to prove existence for several overdetermined value problems. The use of the Cauchy-Schwarz's inequality is crucial for demonstrations. In some cases, we obtain an integral inequality that will either provide us with a solution of our free boundary problem or that $C_f$ is an $N$-ball.\\
The paper is organized as follows. In Section 2, we introduce some definitions and, for reader's convenience, recall some results which will use thereafter. Section 3 is dedicated to the main results (Theorems 3.1 and 3.2), their proofs and some corollaries. For Section 4, in order to get existence for several other overdetermined-value problems, some classical functional inequalities like Cauchy-Schwarz's inequality or eigenvalue inequalities are used. In Section 5, final remarks close the paper.

\section{Preliminaries}
This section is dedicated to some definitions and results which will be useful in the sequel.
\begin{definition} \cite{ba1}
Let $C$ be a compact convex set, the bounded domain
$\Omega$ satisfies $C$-\textsc{gnp} if
\begin{enumerate}
\item  $\Omega \supset \text{int}(C)$,

\item  $\partial \Omega \setminus C$ is locally Lipschitz,

\item  for any $c\in \partial C$ there is an outward normal ray
$\Delta _{c}$ such that $\Delta _{c}\cap \Omega $ is connected,
and

\item  for every $x\in \partial \Omega \setminus C$ the inward
normal ray to $\Omega $ (if exists) meets $C$.
\end{enumerate}
\end{definition}

\begin{remark}
If $\Omega $ satisfies the $C$-\textsc{gnp} and $C$ has a nonempty
interior, then $\Omega $ is connected.
\end{remark}
Put
\begin{equation*}
\mathcal{O}_{C}=\left\{ \omega \subset D: \omega\quad
\text{satisfies }C-\textsc{gnp}\right\}.
\end{equation*}
\begin{theorem}
If $\Omega _{n}\in \mathcal{O}_{C}$\,, then there exist an open
subset $\Omega \subset D$ and a subsequence (again denoted by
$\Omega _{n}$) such that $\Omega_{n}$ converges to $\Omega$ in the Hausdorff, compact and characteristic sense and
$\Omega\in \mathcal{O}_{C}$. Furthermore, the three convergence are equivalent in $\mathcal{O}_{C}$.
\end{theorem}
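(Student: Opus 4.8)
The plan is to argue by a compactness extraction in the bounded hold-all $D$, then to verify that the limit inherits the $C$-\textsc{gnp} structure, and finally to show that this structure forces the three notions of convergence to coincide. The decisive mechanism throughout is that the normal-ray conditions (3) and (4) of the $C$-\textsc{gnp}, applied uniformly over the sequence, should produce a \emph{uniform cone condition} on the boundaries $\partial\Omega_n\setminus C$.

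First I would extract a convergent subsequence. Since every $\Omega_n$ is contained in the fixed bounded set $D$, the closed complements $\overline{D}\setminus\Omega_n$ form a family of closed subsets of a compact set, so by the Blaschke selection principle one passes to a subsequence whose complements converge in the Hausdorff sense to a closed set; its complement defines an open set $\Omega\subset D$, and this gives Hausdorff convergence $\Omega_n\to\Omega$ along the subsequence. The inclusion $\text{int}(C)\subset\Omega$ persists in the limit because each $\Omega_n$ contains the fixed set $\text{int}(C)$ together with, by (3)--(4), a controlled outward neighbourhood of $\partial C$ that does not collapse.

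Next I would check that $\Omega\in\mathcal{O}_C$ by passing each of the four defining properties to the limit. Condition (1) is immediate from the previous step. The heart of the matter is to extract from (3)--(4) a uniform cone condition: at each point of $\partial\Omega_n\setminus C$ there is a cone of fixed opening and height, oriented toward $C$, lying outside $\Omega_n$, together with a reflected cone lying inside. This uniform geometric control passes to $\Omega$ and returns the local Lipschitz regularity (2); the connectedness of $\Delta_c\cap\Omega$ in (3) and the meeting property (4) are closed conditions that survive the Hausdorff limit once the cone openings are uniform, since no part of $\partial\Omega$ can develop a cusp or a reversed normal.

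Finally I would deduce the equivalence of the Hausdorff, characteristic, and compact convergences on $\mathcal{O}_C$. Here the uniform cone condition is exactly the hypothesis of Chenais' compactness theorem: it yields equicontinuity of the boundary graphs, so Hausdorff convergence forces convergence of the characteristic functions in $L^1$ (no measure is lost across the boundary) and compact convergence (no sudden collapse or filling of the domains), and each of these in turn implies the others under the same uniform control. The main obstacle is precisely the derivation of the uniform cone condition from the \emph{pointwise} normal-ray hypotheses (3)--(4): one must show that the opening angle and the height of the admissible cones can be chosen independently of $n$, using only the fixed convex set $C$ and the hold-all $D$. Once this uniformity is secured, both the closedness of $\mathcal{O}_C$ and the equivalence of the three convergences follow from the standard theory of domains satisfying a uniform cone condition.
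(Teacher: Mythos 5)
Your plan hinges entirely on upgrading the pointwise conditions (3)--(4) to a \emph{uniform} cone condition and then invoking Chenais-type compactness, and that is exactly where it fails. The paper offers no proof of this theorem (it refers to \cite{ba1}), and the title of that reference --- \og Some geometric properties for a class of non Lipschitz-domains\fg{} --- already signals the obstruction: $\mathcal{O}_{C}$ is \emph{not} contained in any uniform cone class. Concretely, the exterior cone available at $x\in\partial\Omega\setminus C$ is the normal cone $K_{x}$ of Definition 2.2, and its opening degenerates as $\mathrm{dist}(x,C)\to 0$: at a smooth point $c\in\partial C$, $K_{c}$ reduces to a single ray, so no opening angle independent of $n$ and of the point can exist near $\partial C$. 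Worse, the $C$-\textsc{gnp} provides no interior cone at all: $\mathrm{int}(C)$ itself belongs to $\mathcal{O}_{C}$, and so does $\mathrm{int}(C)$ augmented by an arbitrarily thin outward spike along a normal ray, whose Lipschitz constant blows up as the spike narrows. This also refutes your claim that each $\Omega_{n}$ contains a \og controlled outward neighbourhood of $\partial C$\fg{}; the persistence of $\mathrm{int}(C)\subset\Omega$ in the limit is in fact immediate from Hausdorff convergence of open sets (the closed complements stay inside the closed set $\overline{D}\setminus\mathrm{int}(C)$) and needs no collar. With the uniform cone condition gone, your derivations of property (2) for the limit, of the closedness of $\mathcal{O}_{C}$, and of the equivalence of the three convergences all collapse; recall that for general open sets Hausdorff convergence does not imply convergence of characteristic functions, so the equivalence must come from the specific geometry of the class, not from generic regularity.

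The argument that actually works, and which the present paper partially rehearses at the end of Section 2 for small deformations, goes through the equivalence $C$-\textsc{gnp} $\Leftrightarrow$ $C$-\textsc{sp} (Proposition 2.2). The exclusion condition $K_{x}\cap\Omega=\emptyset$ is a closed condition that passes to Hausdorff limits by exactly the kind of computation displayed there ($\epsilon(t)\to 0$), which gives $\Omega\in\mathcal{O}_{C}$ without any uniform boundary regularity. For the equivalence of the convergences one uses instead the section structure imposed by condition (3): along each outward normal ray $\Delta_{c}$, $c\in\partial C$, the trace $\Delta_{c}\cap\Omega_{n}$ is a connected segment, and every point of $D\setminus C$ lies on such a ray via the metric projection onto $C$; Hausdorff convergence then forces convergence of these one-dimensional sections, and dominated convergence yields $\chi_{\Omega_{n}}\to\chi_{\Omega}$ in $L^{1}$ together with convergence in the compact sense. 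In short, the monotone ray-wise parametrization of $\Omega\setminus C$ over $\partial C$ is the substitute for the equicontinuous boundary graphs you tried to obtain, and it is available precisely because of the $C$-\textsc{gnp}, not because of any uniform cone property.
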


Barkatou proved this theorem (see e.g. \cite{ba1}).

\begin{proposition}
Let $\left\{ \Omega _{n},\Omega \right\}\subset \mathcal{O}_{C}$
such that $\Omega _{n}\overset{H}{\longrightarrow }\Omega $\,. Let
$u_{n}$ and $u_{\Omega }$ be respectively the solutions of
$P(\Omega _{n},f)$ and $P(\Omega,f)$\,. Then $u_{n}$ converges
strongly in $H_{0}^{1}(D)$ to $u_{\Omega }$ ($u_{n}$ and
$u_{\Omega }$ are extended by zero in $D$).
\end{proposition}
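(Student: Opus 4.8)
The plan is to run the classical stability argument for the Dirichlet Laplacian, the whole point being that membership in $\mathcal{O}_{C}$ (the $C$-\textsc{gnp} property) supplies exactly the boundary regularity needed to pass to the limit. Here $P(\Omega,f)$ denotes the problem $-\Delta u=f$ in $\Omega$, $u=0$ on $\partial\Omega$, whose variational solution $u_{\Omega}\in H_{0}^{1}(\Omega)$ is characterized by $\int_{\Omega}\nabla u_{\Omega}\cdot\nabla\varphi=\int_{\Omega}f\varphi$ for all $\varphi\in H_{0}^{1}(\Omega)$. The first step is a uniform a priori bound: testing the equation with $u_{n}$ itself gives $\|\nabla u_{n}\|_{L^{2}(D)}^{2}=\int_{D}fu_{n}\le\|f\|_{L^{2}}\|u_{n}\|_{L^{2}}$, and since every $\Omega_{n}\subset D$ the Poincar\'e constant may be taken independent of $n$; hence $\|u_{n}\|_{H_{0}^{1}(D)}\le C$ with $C$ depending only on $f$ and $D$. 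By reflexivity and Rellich's theorem I then extract a subsequence with $u_{n}\rightharpoonup u^{\ast}$ weakly in $H_{0}^{1}(D)$ and $u_{n}\to u^{\ast}$ strongly in $L^{2}(D)$.

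Next I would identify $u^{\ast}$ with $u_{\Omega}$. Using Theorem 2.1, the Hausdorff convergence is equivalent to the compact and characteristic convergences. For a test function $\varphi\in C_{c}^{\infty}(\Omega)$ the compact convergence forces $\operatorname{supp}\varphi\subset\Omega_{n}$ for $n$ large, so that $\int_{D}\nabla u_{n}\cdot\nabla\varphi=\int_{D}f\varphi$; passing to the weak limit yields $-\Delta u^{\ast}=f$ in $\mathcal{D}'(\Omega)$. For the boundary behaviour, the characteristic convergence $\chi_{\Omega_{n}}\to\chi_{\Omega}$ together with $u_{n}=0$ a.e.\ on $D\setminus\Omega_{n}$ gives $u^{\ast}=0$ a.e.\ on $D\setminus\Omega$.

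The delicate point, and the one I expect to be the main obstacle, is promoting this to $u^{\ast}\in H_{0}^{1}(\Omega)$: for a general Hausdorff limit the Dirichlet energy can leak across the boundary and the limit need not lie in $H_{0}^{1}(\Omega)$. This is precisely where the $C$-\textsc{gnp} property is decisive. Since $\partial\Omega\setminus C$ is locally Lipschitz and the geometry near the convex core $C$ is controlled by the outward normal-ray conditions, the complement of $\Omega$ satisfies a uniform exterior capacity-density (segment) condition, so the family is stable in the sense of $\gamma$-convergence and one has $\{v\in H^{1}(D):v=0\ \text{q.e.\ on}\ D\setminus\Omega\}=H_{0}^{1}(\Omega)$. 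Consequently $u^{\ast}\in H_{0}^{1}(\Omega)$ solves $P(\Omega,f)$, and by uniqueness $u^{\ast}=u_{\Omega}$; as the limit does not depend on the extracted subsequence, the whole sequence converges.

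Finally I would upgrade weak convergence to strong. From $\|\nabla u_{n}\|_{L^{2}}^{2}=\int_{D}fu_{n}\to\int_{D}fu_{\Omega}=\|\nabla u_{\Omega}\|_{L^{2}}^{2}$ (using the strong $L^{2}$ convergence just obtained), the norms converge; combined with $u_{n}\rightharpoonup u_{\Omega}$ in the Hilbert space $\bigl(H_{0}^{1}(D),\|\nabla\cdot\|\bigr)$ this forces $\|\nabla(u_{n}-u_{\Omega})\|_{L^{2}}\to 0$, which is the asserted strong convergence in $H_{0}^{1}(D)$.
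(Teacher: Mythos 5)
Your skeleton is the classical $\gamma$-convergence argument and most of it is sound: the uniform energy bound via Poincar\'e on $D$, the subsequence extraction, the identification $-\Delta u^{\ast}=f$ in $\mathcal{D}'(\Omega)$ through compact convergence, the upgrade from weak to strong convergence via norm convergence, and the whole-sequence argument by uniqueness are all routine and correct. But note that the paper itself contains no proof of this proposition --- it is quoted verbatim from \cite[Theorem 4.3]{ba1}, where it is established only for $N=2$ or $3$ --- so the entire burden of your proposal rests on the one step you flag as ``delicate'' and then dispatch by assertion. That step is precisely the content of the cited theorem, and your treatment of it has a genuine gap, in fact two.

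First, there is an a.e./q.e.\ mismatch: from characteristic convergence you obtain $u^{\ast}=0$ \emph{almost everywhere} on $D\setminus\Omega$, but you then invoke the characterization $\{v\in H^{1}(D):v=0\ \text{q.e.\ on}\ D\setminus\Omega\}=H_{0}^{1}(\Omega)$, which requires \emph{quasi-everywhere} vanishing; the two are not interchangeable, since parts of $D\setminus\Omega$ near $\partial\Omega$ may have positive capacity but zero Lebesgue measure. To close this you would need either the stronger Keldysh-type stability ($v=0$ a.e.\ outside $\Omega$ already implies $v\in H_{0}^{1}(\Omega)$) or a capacitary estimate holding \emph{uniformly along the sequence} $\Omega_{n}$ in the style of Bucur--Zolesio; you establish neither. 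Second, the claim that the $C$-\textsc{gnp} supplies ``a uniform exterior capacity-density (segment) condition'' is exactly what needs proof, and as stated it is insufficient in the relevant dimension. The exterior set furnished by the $C$-\textsc{sp} characterization (Proposition 2.2) at a point $x\in\partial\Omega\setminus C$ is the normal cone $K_{x}$, and its aperture is controlled only by $\mathrm{dist}(x,C)$: as $x$ approaches $C$ the cone can degenerate toward a ray, and in $\mathbb{R}^{3}$ a ray or segment has zero $H^{1}$-capacity, so a segment-type exterior condition buys no capacity density whatsoever when $N=3$. Obtaining a density estimate that is uniform over the class, or otherwise handling the boundary points close to the convex core, is the heart of the matter; it is why the result in \cite{ba1} is proved only for $N=2,3$, and it is missing from your argument.
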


This proposition was proven for $N=2$ or $3$ \cite[Theorem
4.3]{ba1}).
The goal of the following theorem is to show that the minimizers of the functionals considered here are of class $C^{2}$.
\begin{theorem}(\cite{bsl})
 Let $L$ be a compact set of $\mathbb{R}^N)$. Let $f_{n}$ be a sequence of functions defined on $L$.
 $$|\frac{\partial f_{n}}{\partial x_{i}}|\leq M,\;|\frac{\partial^{2} f_{n}}{\partial x_{i}\partial x_{j}}|\leq M,\;|\frac{\partial^{3} f_{n}}{\partial x_{i}\partial x_{j}\partial x_{k}}|\leq M.$$
We define $\Omega_{n}=\{x\in L\;:\; f_{n}(x)>0\}$ and we suppose the existence of $\alpha>0$ such that $|f_{n}(x)|+|\nabla f_{n}(x)|\geq \alpha$ for all $x\in L$. If $\Omega_{n}$ has $C$-\textsc{gnp}, then there exists $\Omega$ of class $C^{2}$ and a subsequence (still denoted by $\Omega_{n}$) which converges to $\Omega$ in the compact sense.
\end{theorem}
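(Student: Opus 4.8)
The plan is to combine an Arzel\`a--Ascoli compactness argument for the defining functions $f_{n}$ with the implicit function theorem applied to the limiting level set. First I would observe that the uniform bounds on the partial derivatives up to order three make the family $(f_{n})$ (assuming, as is implicit here, that the $f_{n}$ are themselves uniformly bounded, say $|f_{n}|\leq M$) bounded in $C^{2}(L)$ with uniformly Lipschitz second derivatives. By the Arzel\`a--Ascoli theorem this family is relatively compact in $C^{2}(L)$, so after passing to a subsequence one has $f_{n}\to f$ in $C^{2}(L)$ for some $f\in C^{2}(L)$ inheriting the same derivative bounds.

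Next I would pass the nondegeneracy condition to the limit. Since the convergence holds in $C^{1}$, the inequality $|f_{n}(x)|+|\nabla f_{n}(x)|\geq\alpha$ survives, giving $|f(x)|+|\nabla f(x)|\geq\alpha$ for every $x\in L$. In particular, on the zero set $\{f=0\}$ one has $|\nabla f|\geq\alpha>0$, so $0$ is a regular value of $f$. Because $f$ is $C^{2}$, the implicit function theorem then provides, near each point of $\{f=0\}$, a $C^{2}$ graph representation of this set; covering the compact zero set by finitely many such charts shows that $\{f=0\}$ is a $C^{2}$ hypersurface. Setting $\Omega=\{f>0\}$, its topological boundary coincides with $\{f=0\}$, and hence $\Omega$ is of class $C^{2}$.

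It remains to identify $\Omega$ as the compact limit of the $\Omega_{n}$. For this I would invoke Theorem 2.1: since each $\Omega_{n}$ lies in $\mathcal{O}_{C}$, a subsequence converges in the compact (equivalently Hausdorff and characteristic) sense to some limit in $\mathcal{O}_{C}$. The uniform convergence $f_{n}\to f$ together with the uniform gradient lower bound near the zero set forces $\{f_{n}>0\}\to\{f>0\}$, so the limit produced by Theorem 2.1 is exactly $\Omega=\{f>0\}$. Matching this subsequence with the one from the Arzel\`a--Ascoli step yields a single subsequence along which $\Omega_{n}\to\Omega$ in the compact sense with $\Omega$ of class $C^{2}$.

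The main obstacle is the identification step: one must show that the topological boundary $\partial\Omega$ of the compact limit coincides exactly with the analytic zero set $\{f=0\}$, and that no boundary pieces are created or lost in passing to the limit. This is precisely where the uniform nondegeneracy $|f_{n}|+|\nabla f_{n}|\geq\alpha$ is essential: it rules out a degenerate tangential vanishing of $f$ and supplies a uniform transversality that upgrades the $C^{0}$ convergence of the sets to Hausdorff convergence of their boundaries, so that the regular level-set structure furnished by the implicit function theorem genuinely describes $\partial\Omega$.
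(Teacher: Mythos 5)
The paper gives no in-text proof of this statement: Theorem 2.2 is quoted with the citation \cite{bsl} and the argument is deferred entirely to that reference. Your proposal reconstructs what is indeed the standard proof of such level-set compactness results, and it is essentially correct: Arzel\`a--Ascoli applied to the third-order bounds yields $f_{n}\to f$ in $C^{2}$ along a subsequence, the nondegeneracy $|f|+|\nabla f|\geq\alpha$ passes to the limit, and the implicit function theorem makes $\{f=0\}$ a $C^{2}$ hypersurface bounding $\Omega=\{f>0\}$. You were also right to flag the missing hypothesis: the printed bounds on derivatives of orders one to three do not control $f_{n}$ itself (take $f_{n}\equiv n$, which satisfies every stated hypothesis), so the uniform bound $|f_{n}|\leq M$ you add is a necessary repair, not a luxury.

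Two refinements to your identification step. First, routing it through Theorem 2.1 is slightly incomplete as written: that theorem produces a compact-sense limit $\Omega'\in\mathcal{O}_{C}$ along \emph{some} subsequence, and your conclusion $\Omega'=\{f>0\}$ tacitly invokes uniqueness of compact-sense limits. It is cleaner to prove compact convergence to $\Omega=\{f>0\}$ directly from uniform convergence plus nondegeneracy: if $K\subset\Omega$ is compact then $f\geq\delta>0$ on $K$, so $K\subset\Omega_{n}$ for large $n$; and if $K\cap\overline{\Omega}=\emptyset$ then $f<0$ on $K$ (any zero of $f$ has $|\nabla f|\geq\alpha$ and hence lies in $\overline{\{f>0\}}$), so $f_{n}\leq-\delta/2$ near $K$ and $K\cap\overline{\Omega}_{n}=\emptyset$ for large $n$. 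On this direct route the $C$-\textsc{gnp} hypothesis is used only to conclude, via Theorem 2.1, that the limit again lies in $\mathcal{O}_{C}$ --- a fact the statement does not assert but the paper needs downstream, and which your write-up leaves implicit. Finally, a peripheral point: $\partial\Omega=\{f=0\}$ is guaranteed by the implicit function theorem only in the interior of $L$; if $\{f>0\}$ were to reach $\partial L$, part of $\partial\Omega$ could lie on $\partial L$, so one implicitly assumes the domains stay away from $\partial L$, consistent with the hold-all convention of the paper.
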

\begin{definition}
Let $C$ be a convex set. We say that an open subset $\Omega $ has
the $C$-\textsc{sp}, if

\begin{enumerate}
\item  $\Omega \supset \text{int}(C)$\,,

\item  $\partial \Omega \setminus C$ is locally Lipschitz,

\item  for any $c\in \partial C$ there is an outward normal ray
$\Delta _{c}$ such that $\Delta _{c}\cap \Omega $ is connected,
and

\item  for all $x\in \partial \Omega \setminus C \quad K_{x}\cap
\Omega =\emptyset$\,, where $K_{x}$ is the closed cone defined by
\begin{equation*}
\left\{ y\in \mathbb{R}^{N} : (y-x).(z-x)\leq 0\,, \text{ for all
} z\in C\right\}.
\end{equation*}
\end{enumerate}
\end{definition}

\begin{remark}
$K_{x}$ is the normal cone to the convex hull of $C$ and $\{x\}$.
\end{remark}

\begin{proposition}\cite[Proposition 2.3]{ba1}
$\Omega$ has the $C$-\textsc{gnp} if and only if $\Omega $
satisfies the $C$-\textsc{sp}.
\end{proposition}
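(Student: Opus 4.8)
The two definitions of $C$-\textsc{gnp} and $C$-\textsc{sp} share conditions 1, 2 and 3 verbatim; only the fourth condition differs. So the plan is to fix an open set $\Omega$ satisfying conditions 1--3 and prove that its fourth \textsc{gnp} condition (every inward normal ray, where it exists, meets $C$) is equivalent to its fourth \textsc{sp} condition ($K_x\cap\Omega=\emptyset$ for every $x\in\partial\Omega\setminus C$). Throughout I would exploit Remark 2.2: writing $N_x=\{w:\ w\cdot(z-x)\le 0\ \text{for all }z\in C\}$ for the normal cone, so that $K_x=x+N_x$, and letting $T_x$ be its polar cone, namely the closed cone generated by the directions $z-x$, $z\in C$ (the directions pointing from $x$ towards $C$), one has the duality $w\in N_x\iff w\cdot v\le 0$ for all $v\in T_x$. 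The key local fact I would establish first is that at a point $x$ where $\partial\Omega$ is differentiable with outward normal $\nu$, the inward half-ray $\{x-t\nu:\ t\ge 0\}$ points into $C$ (its direction lies in $T_x$) if and only if $\nu\in N_x$, and in that case every $w\in N_x$ satisfies $\nu\cdot w\ge 0$, so that $K_x$ leaves $\Omega$ immediately near $x$.

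Next, for \textsc{gnp} $\Rightarrow$ \textsc{sp}, I would argue by contradiction. Suppose some $x\in\partial\Omega\setminus C$ has $K_x\cap\Omega\neq\emptyset$, pick $y\in K_x\cap\Omega$ and set $w=(y-x)/|y-x|\in N_x$. Since $\Omega$ is open and bounded and $x\in\partial\Omega$, the ray $x+\mathbb{R}_{\ge 0}w$ has a last point $x^\ast=x+s^\ast w\in\partial\Omega$ with $s^\ast\ge|y-x|>0$, and the segment just before $x^\ast$ lies in $\Omega$. Using the local Lipschitz regularity of $\partial\Omega\setminus C$ (condition 2) I may take $x^\ast$ to be a differentiability point, with outward normal $\mu$ satisfying $\mu\cdot w\ge 0$ because $\Omega$ lies locally on the $-w$ side. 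By \textsc{gnp} the inward ray from $x^\ast$ meets $C$, i.e.\ $-\mu\in T_{x^\ast}$; I would then combine $w\in N_x$ (so $w\cdot(z-x)\le 0$ for all $z\in C$) with the position $x^\ast=x+s^\ast w$ and the fact that the inward normal at $x^\ast$ is forced to point away from $C$ along $w$, to contradict that the inward ray at $x^\ast$ actually reaches $C$.

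For the converse \textsc{sp} $\Rightarrow$ \textsc{gnp}, I would again argue by contradiction: let $x$ be a differentiability point of $\partial\Omega\setminus C$ whose inward normal ray $r(t)=x-t\nu$ misses $C$. Since $r(0^+)\in\Omega$ and $\Omega$ is bounded, $r$ first exits $\Omega$ at $x'=x-t_0\nu\in\partial\Omega\setminus C$ (it is not in $C$ because the ray misses $C$), and the points $r(t)=x'+(t_0-t)\nu\in\Omega$ for $t\uparrow t_0$ lie in the $+\nu$ direction from $x'$. The aim is to show $\nu\in N_{x'}$, which would place these points in $K_{x'}\cap\Omega$ and contradict \textsc{sp}. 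To get $\nu\in N_{x'}$, namely $C\subset\{y:\ \nu\cdot(y-x')\le 0\}$, I would use that the ray misses the convex set $C$ together with conditions 1 and 3 (the connectedness of the intersection of the outward normal rays from $\partial C$ with $\Omega$, and $\Omega\supset\mathrm{int}(C)$) to locate $C$ below the hyperplane through $x'$ orthogonal to $\nu$.

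I expect the main obstacle to be precisely this passage from a local statement at a single boundary point to the global cone condition, i.e.\ showing that the normal at the exit point really does separate $C$ in the required way; this is where the convexity of $C$ and conditions 1 and 3 must be used in an essential, quantitative manner, and where the ``if it exists'' clause for the normal (handled via the a.e.\ differentiability coming from condition 2) has to be reconciled with the everywhere statement $K_x\cap\Omega=\emptyset$. Once the separation is secured, both implications close immediately by the local duality between $N_x$ and $T_x$ recorded in the first step.
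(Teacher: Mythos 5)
The paper contains no proof of this proposition: it is imported verbatim from \cite{ba1} (Proposition 2.3 there), so your attempt can only be judged on its own terms, and on those terms it is a plan rather than a proof --- in each direction the decisive step is announced but never carried out, as you yourself concede in your final paragraph. Worse, in the direction \textsc{gnp} $\Rightarrow$ \textsc{sp} the ingredients you assemble are provably insufficient to produce the contradiction. From $w=(y-x)/|y-x|\in N_x$, the exit point $x^*=x+s^*w\in\partial\Omega\setminus C$ (this part of your setup is fine: $K_x\cap C\subset\{x\}$, since $z\in C\cap K_x$ would give $|z-x|^2\le 0$), the outward normal $\mu$ there with $\mu\cdot w\ge 0$, and the point $z_0=x^*-t\mu\in C$, $t>0$, furnished by the \textsc{gnp}, the only inequality you can extract from $w\in N_x$ is $(z_0-x)\cdot w=s^*-t\,\mu\cdot w\le 0$, i.e.\ $t\,\mu\cdot w\ge s^*$. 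This system of conditions is mutually consistent (take $\mu$ close to $w$ and $t\ge s^*$), so no contradiction follows from the facts you have listed; whatever excludes such a configuration must use conditions 1 and 3 and the convexity of $C$ in a way your sketch never specifies, and that is exactly the substance of the proposition.

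The converse half has the same hole at its crux: the claim that the first exit point $x'=x-t_0\nu$ of an inward normal ray missing $C$ satisfies $\nu\in N_{x'}$, i.e.\ $C\subset\{y:\nu\cdot(y-x')\le 0\}$, is simply not a consequence of the ray missing the convex set $C$ --- a ray can pass beside $C$ with part of $C$ strictly on the $+\nu$ side of the hyperplane through $x'$ orthogonal to $\nu$ --- and you invoke conditions 1 and 3 to close this gap without ever actually using them. Two further points fail even at the level of your setup. First, your ``key local fact'' is false as an equivalence: since the ray is a full half-line and $C$ is convex, the inward ray meets $C$ exactly when $-\nu\in T_x$, and this does imply $\nu\cdot w\ge 0$ for all $w\in N_x$, but it neither implies nor is implied by $\nu\in N_x$ (take $C$ a single point $z_0$: then $T_x$ is the single ray $\mathbb{R}_{\ge 0}(z_0-x)$ while $N_x$ is a closed half-space, and any $\nu$ orthogonal to $z_0-x$ lies in $N_x$ although the inward ray misses $C$). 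Second, the normal to the locally Lipschitz part of $\partial\Omega$ exists only almost everywhere, so declaring the exit point of one prescribed ray to be a differentiability point requires a perturbation or Fubini-type argument that you do not supply. In short, the skeleton (conditions 1--3 shared, equivalence of the two fourth conditions, duality of $N_x$ and $T_x$) is a reasonable way to organize the problem, but the load-bearing step of each implication is missing, so this does not yet establish the equivalence that the paper cites from \cite{ba1}.
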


\begin{proposition}\cite[Theorem 3.5]{bt}
Let $v_{n}$ and $\;v_{\Omega }$ be respectively the solutions of
the Dirichlet problems $P(\Omega _{n},g_{n})$ and $P(\Omega ,g )$.
If $g_{n}$ converges strongly in $H^{-1}(D)$ to $g$ then $v_{n}$
converges strongly in $H_{0}^{1}(D)$ to $v_{\Omega }$ ($v_{n}$ and
$v_{\Omega }$ are extended by zero in $D$).
\end{proposition}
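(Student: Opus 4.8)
The plan is to reduce the statement to the stability of the Dirichlet problem with a \emph{fixed} right-hand side (the preceding Proposition), by decoupling the two perturbations at work: the variation of the datum $g_n \to g$ and the variation of the domain $\Omega_n \to \Omega$. Recall that, in weak form, $v_\Omega \in H_0^1(\Omega)$ is characterized by
\[
\int_{\Omega}\nabla v_\Omega\cdot\nabla\varphi\,dx=\langle g,\varphi\rangle\qquad\text{for all }\varphi\in H_0^1(\Omega),
\]
and similarly $v_n\in H_0^1(\Omega_n)$ solves the analogous identity with datum $g_n$ on $\Omega_n$; all functions are extended by zero to $D$. The idea is to introduce the auxiliary solutions $w_n\in H_0^1(\Omega_n)$ of $P(\Omega_n,g)$, that is, the \emph{limit} datum $g$ posed on the \emph{moving} domain $\Omega_n$, and to write $v_n-v_\Omega=(v_n-w_n)+(w_n-v_\Omega)$.

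First I would control the data-variation term. The difference $z_n:=v_n-w_n$ lies in $H_0^1(\Omega_n)$ and solves $P(\Omega_n,g_n-g)$. Testing with $z_n$ itself gives
\[
\|\nabla z_n\|_{L^2(D)}^2=\langle g_n-g,\,z_n\rangle\le\|g_n-g\|_{H^{-1}(D)}\,\|z_n\|_{H_0^1(D)}.
\]
Since $D$ is a bounded hold-all and every $\Omega_n\subset D$, the Poincaré inequality on $D$ bounds $\|z_n\|_{H_0^1(D)}$ by $C_D\|\nabla z_n\|_{L^2(D)}$ with a constant independent of $n$; hence $\|z_n\|_{H_0^1(D)}\le C\,\|g_n-g\|_{H^{-1}(D)}\to 0$ by hypothesis. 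For the domain-variation term, the datum is now fixed equal to $g$, so I would invoke the preceding Proposition, whose argument (from \cite[Theorem 4.3]{ba1}) applies verbatim to any fixed element of $H^{-1}(D)$ and not merely to $f$; together with the fact that the Hausdorff, compact and characteristic convergences coincide on $\mathcal{O}_C$ and that the limit domain $\Omega$ again belongs to $\mathcal{O}_C$, this yields $w_n\to v_\Omega$ strongly in $H_0^1(D)$. The triangle inequality then gives $\|v_n-v_\Omega\|_{H_0^1(D)}\le\|z_n\|_{H_0^1(D)}+\|w_n-v_\Omega\|_{H_0^1(D)}\to 0$.

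The hard part is the domain-variation step $w_n\to v_\Omega$, because there the solution spaces $H_0^1(\Omega_n)$ themselves move while the equation stays fixed; the genuine content is that Hausdorff (equivalently characteristic) convergence of $C$-\textsc{gnp} domains forces the \emph{Mosco} convergence of $H_0^1(\Omega_n)$ to $H_0^1(\Omega)$, so that on the one hand any test function in $H_0^1(\Omega)$ is reachable as a strong limit of functions in $H_0^1(\Omega_n)$ (allowing passage to the limit in the weak formulation), and on the other hand no energy leaks in the limit (so that $\|\nabla w_n\|_{L^2}\to\|\nabla v_\Omega\|_{L^2}$ and weak convergence upgrades to strong). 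This is precisely where the $C$-\textsc{gnp}/\textsc{sp} geometry is indispensable and is the reason the result fails for arbitrary open sets; fortunately it is already established, so in the write-up it suffices to cite it and concentrate the verification on the uniform bound in the data-variation estimate above.
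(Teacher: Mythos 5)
The paper offers no proof of this proposition at all: it is recalled verbatim as a known result, with the reference \cite[Theorem 3.5]{bt} standing in for the argument, so there is no internal proof to compare yours against. Your proposal is correct and is essentially the standard argument that underlies such stability statements: the splitting $v_n-v_\Omega=(v_n-w_n)+(w_n-v_\Omega)$, with $w_n$ solving $P(\Omega_n,g)$, cleanly decouples the data perturbation --- dispatched by the energy estimate $\|\nabla z_n\|_{L^2(D)}^2\le\|g_n-g\|_{H^{-1}(D)}\|z_n\|_{H_0^1(D)}$ together with the Poincar\'e inequality on the bounded hold-all $D$, which is uniform in $n$ because every $\Omega_n\subset D$ --- from the domain perturbation, which is the content of the preceding Proposition (\cite[Theorem 4.3]{ba1}). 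Two points deserve care in a final write-up. First, the statement as printed omits the hypothesis that $\Omega_n$ converges to $\Omega$ (Hausdorff convergence within $\mathcal{O}_{C}$); you use it implicitly by invoking the preceding Proposition, and it must be made explicit, since without some domain convergence the claim is false. Second, that Proposition is stated only for the specific datum $f$, so your parenthetical claim that its proof applies verbatim to an arbitrary fixed $g\in H^{-1}(D)$ carries the real weight of the argument; your identification of Mosco convergence of $H_0^1(\Omega_n)$ to $H_0^1(\Omega)$ as the mechanism (reachability of test functions, plus the automatic upgrade from weak to strong convergence via $\|\nabla w_n\|_{L^2}^2=\langle g,w_n\rangle\to\langle g,v_\Omega\rangle=\|\nabla v_\Omega\|_{L^2}^2$) is the right one, and if one prefers not to re-inspect the proof in \cite{ba1}, the same conclusion follows by density: approximate $g$ in $H^{-1}(D)$ by data for which the fixed-datum stability is known, and absorb the error uniformly in $n$ using the very energy estimate you already established.
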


\begin{lemma}\cite{bz,pi}
Let $\Omega_{n}$ be a sequence of open and bounded subsets of $D$.
There exist a subsequence (again denoted by $\Omega_{n}$) and some
open subset $\Omega$ of $D$ such that
\begin{enumerate}
    \item $\Omega_{n}$ converges to $\Omega$ in the Hausdorff
    sense, and
    \item
    $|\Omega|\leq\liminf_{n\rightarrow\infty}|\Omega_{n}|$.
\end{enumerate}
\end{lemma}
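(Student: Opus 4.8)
The plan is to establish the two conclusions separately: first the Hausdorff precompactness, via a Blaschke-type selection applied to the complements, and then the lower semicontinuity of the volume, via a compact-exhaustion estimate. The compactness half will be essentially automatic from a classical selection theorem, so the real content lies in the semicontinuity.

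First I would recall that Hausdorff convergence of open subsets of $D$ is formulated through their relative complements. Writing $K_{n}:=\overline{D}\setminus\Omega_{n}$, each $K_{n}$ is a closed, hence compact, subset of the fixed compact set $\overline{D}$ (and contains $\partial D$, since $\Omega_{n}\subset D$). The space of compact subsets of a compact metric space, equipped with the Hausdorff distance, is itself compact by the Blaschke selection theorem. I would therefore pass to a subsequence, still denoted $K_{n}$, converging in the Hausdorff distance to a compact set $F\subset\overline{D}$ with $\partial D\subset F$, and set $\Omega:=\overline{D}\setminus F$. Because $F$ is closed and contains $\partial D$, the set $\Omega$ equals $D\cap(\mathbb{R}^{N}\setminus F)$ and is therefore an open subset of $D$; moreover the corresponding subsequence $\Omega_{n}$ converges to $\Omega$ in the Hausdorff sense by construction. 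This yields conclusion (1).

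For conclusion (2) the decisive step is the standard consequence of Hausdorff convergence: every compact set $K\subset\Omega$ is contained in $\Omega_{n}$ for all sufficiently large $n$. Indeed, $K$ and $F$ are disjoint compact sets, so $\delta:=\operatorname{dist}(K,F)>0$; once $d^{H}(K_{n},F)<\delta$, the set $K_{n}$ lies in the $\delta$-neighborhood of $F$ and hence is disjoint from $K$. Since $K\subset\overline{D}$ and $K_{n}=\overline{D}\setminus\Omega_{n}$, disjointness from $K_{n}$ means exactly $K\subset\Omega_{n}$. Consequently $|K|\le|\Omega_{n}|$ for all large $n$, so that $|K|\le\liminf_{n\to\infty}|\Omega_{n}|$.

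It then remains to invoke the inner regularity of the Lebesgue measure, namely $|\Omega|=\sup\{\,|K|:K\subset\Omega\text{ compact}\,\}$. Taking the supremum over all compact $K\subset\Omega$ in the previous inequality gives $|\Omega|\le\liminf_{n\to\infty}|\Omega_{n}|$, which is conclusion (2). The only point requiring genuine care is the bookkeeping relating the open sets to their complements in the definition of Hausdorff convergence; once the limit set $F$ of the complements is produced by Blaschke selection, the exhaustion estimate delivers the semicontinuity with no further difficulty.
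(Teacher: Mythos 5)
Your proposal is correct and coincides with the standard argument: the paper states this lemma without giving a proof, citing \cite{bz,pi}, and your route --- Blaschke selection applied to the compact complements $K_{n}=\overline{D}\setminus\Omega_{n}$ in $\overline{D}$, the eventual inclusion $K\subset\Omega_{n}$ for every compact $K\subset\Omega$ obtained from $\operatorname{dist}(K,F)>0$, and then inner regularity of Lebesgue measure --- is exactly the classical proof found in those references (see also Henrot--Pierre \cite{hp}). Nothing is missing.
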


\begin{lemma}
Let $u_{\Omega}\in C^{2,\alpha}(\bar\Omega)$ be the solution of $P(\Omega,N)$. Then $\Omega$ is an $N$-ball if and only if
  $|\nabla u_{\Omega}(x)|=\frac{1}{H_{\Omega}(x)},$ for every $x\in\partial\Omega$. $H_{\Omega}(x)$ is the mean curvature of $\partial\Omega$.
\end{lemma}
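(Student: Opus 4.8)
The statement is an equivalence, and the two directions are of very unequal difficulty. The forward implication is a one-line verification: if $\Omega=B_R(x_0)$, the unique solution of $P(\Omega,N)$ is $u_\Omega(x)=\tfrac12\big(R^2-|x-x_0|^2\big)$, so $|\nabla u_\Omega|=|x-x_0|=R$ on $\partial\Omega$, while every principal curvature of the sphere equals $1/R$, whence $H_\Omega\equiv 1/R$. Thus $|\nabla u_\Omega|=1/H_\Omega$ on $\partial\Omega$. All the work is in the converse, which is an overdetermined (Serrin-type) rigidity statement tailored to the mean curvature, and the natural engine for it is \emph{Reilly's formula}.

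Before applying it I would record two preliminary facts. First, since $-\Delta u_\Omega=N>0$ with $u_\Omega=0$ on $\partial\Omega$, the maximum principle gives $u_\Omega>0$ in $\Omega$ and Hopf's lemma gives $\partial u_\Omega/\partial\nu<0$; hence $\nabla u_\Omega=-|\nabla u_\Omega|\,\nu\neq 0$ on $\partial\Omega$, so $\partial\Omega$ is a genuine $C^{2}$ hypersurface and $H_\Omega$ is well defined (the hypothesis $1/H_\Omega=|\nabla u_\Omega|$ then forces $H_\Omega>0$). Second, integrating the equation and using $\partial u_\Omega/\partial\nu=-|\nabla u_\Omega|$,
\[
\int_{\partial\Omega}|\nabla u_\Omega|\,dS=\int_\Omega(-\Delta u_\Omega)\,dx=N|\Omega|.
\]

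Now write $\mathcal H_\Omega=(N-1)H_\Omega$ for the sum of the principal curvatures. Applying Reilly's formula in $\mathbb{R}^N$ (zero Ricci curvature) to $u_\Omega$, and using that $u_\Omega=0$ on $\partial\Omega$ kills the tangential terms, one obtains
\[
\int_\Omega\big[(\Delta u_\Omega)^2-|D^2u_\Omega|^2\big]\,dx=\int_{\partial\Omega}\mathcal H_\Omega\,\Big(\tfrac{\partial u_\Omega}{\partial\nu}\Big)^2 dS.
\]
On the left, the pointwise Cauchy--Schwarz inequality $|D^2u_\Omega|^2\ge \tfrac1N(\Delta u_\Omega)^2=N$ together with $(\Delta u_\Omega)^2=N^2$ gives the bound $\le N(N-1)|\Omega|$. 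On the right, the overdetermined condition $\mathcal H_\Omega=(N-1)/|\nabla u_\Omega|$ and $(\partial u_\Omega/\partial\nu)^2=|\nabla u_\Omega|^2$ turn the integrand into $(N-1)|\nabla u_\Omega|$, so by the identity above the right-hand side equals exactly $N(N-1)|\Omega|$. Hence the inequality on the left is saturated.

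Equality in the Cauchy--Schwarz step holds iff $D^2u_\Omega=\tfrac{\Delta u_\Omega}{N}\,I=-I$ a.e., and by continuity everywhere; then $u_\Omega(x)=-\tfrac12|x|^2+b\cdot x+c=\tfrac12\big(\rho^2-|x-b|^2\big)$ for suitable $b,c$, so the superlevel set $\Omega=\{u_\Omega>0\}$ is a ball, completing the converse. (Equivalently, the displayed boundary identity $\int_{\partial\Omega}1/H_\Omega\,dS=N|\Omega|$ together with the Heintze--Karcher inequality, whose equality case is precisely the ball, yields the same conclusion; I would mention this as an alternative since Heintze--Karcher is itself a corollary of Reilly's formula.) The only delicate point is regularity: I must make sure that $u_\Omega\in C^{2,\alpha}(\bar\Omega)$ and the induced $C^2$ regularity of $\partial\Omega$ are enough to legitimately apply Reilly's formula and to define $H_\Omega$ pointwise, and that the curvature normalization ($H_\Omega=1/R$ for the sphere, i.e.\ $H_\Omega$ the average of the principal curvatures, matching $\mathcal H_\Omega=(N-1)H_\Omega$) is fixed consistently throughout; once these conventions are pinned down the computation is routine.
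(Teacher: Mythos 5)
Your argument is correct, and it is essentially the paper's own proof: the paper does not argue the lemma inline but defers to Theorem 2.4 of Magnanini--Poggesi \cite{mp}, whose mechanism is exactly what you use --- Reilly's identity for the torsion function with $u=0$ killing the tangential boundary terms, the Newton/Cauchy--Schwarz inequality $|D^2u|^2\geq(\Delta u)^2/N$, the identity $\int_{\partial\Omega}|\nabla u_\Omega|\,dS=N|\Omega|$, and the equality case $D^2u=-I$ forcing $u$ quadratic and $\Omega$ a ball. Your closing remarks on regularity, the curvature normalization $\mathcal{H}_\Omega=(N-1)H_\Omega$, and the Heintze--Karcher reformulation are all consistent with the cited source.
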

For the proof of this lemma, see Theorem 2.4 \cite{mp}.
\begin{theorem}
Let $u_{\Omega}$ (respectively $v_{\Omega}$) be the solution of $P(\Omega,1)$ (respectively $P(\Omega,u_{\Omega})$). If one of the following conditions is satisfied, then $\Omega$ is an $N$-ball.
\begin{enumerate}
  \item $|\nabla v_{\Omega}|=c\text{ on }\partial\Omega.$
  \item $|\nabla v_{\Omega}|=cx.\nu\text{ on }\partial\Omega.$
  \item $|\nabla v_{\Omega}|=c|\nabla u_{\Omega}|\text{ on }\partial\Omega.$
\end{enumerate}
\end{theorem}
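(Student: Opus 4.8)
The plan is to reformulate the problem as the cooperative system $-\Delta u_\Omega=1,\ u_\Omega=0$ on $\partial\Omega$ and $-\Delta v_\Omega=u_\Omega,\ v_\Omega=0$ on $\partial\Omega$ (equivalently the biharmonic problem with Navier data $v_\Omega=\Delta v_\Omega=0$, since $\Delta v_\Omega=-u_\Omega=0$ on $\partial\Omega$), and then to convert each of the three overdetermined conditions into an integral equality whose saturation is possible only on a ball. First I would record the two divergence identities valid for every admissible $\Omega$, namely $|\Omega|=\int_{\partial\Omega}|\nabla u_\Omega|\,dS$ and $\int_\Omega u_\Omega\,dx=\int_{\partial\Omega}|\nabla v_\Omega|\,dS$, obtained by integrating $-\Delta u_\Omega=1$ and $-\Delta v_\Omega=u_\Omega$ and using $\partial_\nu u_\Omega=-|\nabla u_\Omega|$, $\partial_\nu v_\Omega=-|\nabla v_\Omega|$ on $\partial\Omega$. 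I would also need the Rellich--Pohozaev identities for $u_\Omega$ and $v_\Omega$, which bring in the support function $x\cdot\nu$ and the boundary integrals $\int_{\partial\Omega}(x\cdot\nu)|\nabla u_\Omega|^2\,dS$ and $\int_{\partial\Omega}(x\cdot\nu)|\nabla v_\Omega|^2\,dS$; these are exactly the quantities to which the data $|\nabla v_\Omega|=c\,x\cdot\nu$ of case (2) is tailored.

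The second step is the case analysis, each reduced to a Cauchy--Schwarz inequality on $\partial\Omega$. For case (3), the condition $|\nabla v_\Omega|=c|\nabla u_\Omega|$ is precisely the equality case of $\big(\int_{\partial\Omega}|\nabla u_\Omega|\,|\nabla v_\Omega|\,dS\big)^2\le \int_{\partial\Omega}|\nabla u_\Omega|^2\,dS\,\int_{\partial\Omega}|\nabla v_\Omega|^2\,dS$, so I would feed this equality, together with the divergence identities above, into the Rellich--Pohozaev relations in order to force $|\nabla u_\Omega|$ to be constant on $\partial\Omega$. Case (1) is the specialization $|\nabla u_\Omega|\equiv\mathrm{const}$ handled the same way, while case (2) is treated by pairing $|\nabla v_\Omega|$ with $x\cdot\nu$ and using $\int_{\partial\Omega}(x\cdot\nu)\,dS=N|\Omega|$ to again reach a round boundary. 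In every case the target is the Serrin condition $|\nabla u_\Omega|=\mathrm{const}$ on $\partial\Omega$, or directly the pointwise curvature identity $|\nabla u_\Omega|\,H_\Omega=1/N$.

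The third step closes the argument with the mean-curvature Lemma above: since $w:=N u_\Omega$ solves $-\Delta w=N$, that Lemma gives that $\Omega$ is an $N$-ball iff $|\nabla w|=1/H_\Omega$, i.e. iff $|\nabla u_\Omega|=1/(N H_\Omega)$ on $\partial\Omega$. To produce this pointwise identity from the constancy of $|\nabla u_\Omega|$, I would run Weinberger's $P$-function for $u_\Omega$: the function $P=|\nabla u_\Omega|^2+\tfrac{2}{N}u_\Omega$ is subharmonic (by $\Delta u_\Omega=-1$ together with the Cauchy--Schwarz bound $|D^2u_\Omega|^2\ge \tfrac{1}{N}(\Delta u_\Omega)^2$), it is constant on $\partial\Omega$ under the Serrin condition, and the integrated identity then forces $\Delta P\equiv0$; equality in the Hessian Cauchy--Schwarz gives $D^2u_\Omega=-\tfrac{1}{N}I$, so the level set $\{u_\Omega=0\}=\partial\Omega$ is a sphere.

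The hard part will be the equality analysis for the coupled system, because the single-equation Weinberger argument does not apply to $v_\Omega$ directly: the right-hand side of $-\Delta v_\Omega=u_\Omega$ is nonconstant, so $\nabla v_\Omega\cdot\nabla(\Delta v_\Omega)$ no longer vanishes and the natural $P$-function for $v_\Omega$ fails to be subharmonic. The crux is therefore to transfer the boundary information on $|\nabla v_\Omega|$ back to $u_\Omega$ through the divergence and Rellich identities cleanly enough that only the ball saturates the resulting chain of inequalities; case (2), whose data $c\,x\cdot\nu$ breaks translation invariance, is the most delicate and is where the support-function identities must be handled with most care. A robust alternative, should the integral route stall on any case, is the moving-plane method of Serrin applied to the cooperative pair $(u_\Omega,v_\Omega)$ in the spirit of Fromm and McDonald, using Serrin's boundary-point lemma at the tangency planes; the reflection invariance it requires is exactly what conditions (1)--(3) supply.
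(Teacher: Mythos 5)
There is a genuine gap: in every case the crux is to pass from the boundary datum on $|\nabla v_\Omega|$ to Serrin's condition $|\nabla u_\Omega|=\mathrm{const}$ on $\partial\Omega$ (or to the curvature identity of Lemma 2.2), and your outline never accomplishes this for any of the three cases --- you say so yourself (``the hard part will be the equality analysis''). In case (3), remarking that the hypothesis is the equality case of Cauchy--Schwarz is contentless: equality in $\bigl(\int_{\partial\Omega}|\nabla u_\Omega||\nabla v_\Omega|\,dS\bigr)^2\le\int_{\partial\Omega}|\nabla u_\Omega|^2\,dS\int_{\partial\Omega}|\nabla v_\Omega|^2\,dS$ holds if and only if the two gradients are proportional on $\partial\Omega$, which merely restates the hypothesis. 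After that you are left with finitely many scalar identities (the two divergence identities and the two Pohozaev identities), and a finite list of global equalities cannot force a pointwise conclusion on $\partial\Omega$; some pointwise mechanism is indispensable --- for instance a subharmonic $P$-function adapted to the coupled system, or the elementary but decisive observation that in case (3) the function $h=v_\Omega-cu_\Omega$ satisfies $-\Delta h=u_\Omega-c$ with $h=\partial h/\partial\nu=0$ on $\partial\Omega$, whence $\int_\Omega(u_\Omega-c)\phi\,dx=0$ for every harmonic $\phi$. Nothing of this sort appears in your plan. Case (1) is moreover mis-stated: you call it ``the specialization $|\nabla u_\Omega|\equiv\mathrm{const}$'', but condition (1) constrains $|\nabla v_\Omega|$, not $|\nabla u_\Omega|$; no constancy of $|\nabla u_\Omega|$ is given or derived, so the Weinberger endgame of your third paragraph has no input --- and, as you yourself correctly note, the natural $P$-function for $v_\Omega$ fails to be subharmonic because the right-hand side $u_\Omega$ is nonconstant.

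The fallback does not close the gap either, and it is internally inconsistent for case (2). Moving planes in the style of Fromm--McDonald is indeed viable for the cooperative pair under condition (1): reflect, apply the maximum principle first to $u_\Omega-u_\lambda$ and then to $v_\Omega-v_\lambda$, and finish with Serrin's boundary-point lemma. But your closing claim that ``the reflection invariance it requires is exactly what conditions (1)--(3) supply'' contradicts your own earlier, correct remark that the datum $c\,x\cdot\nu$ of case (2) breaks translation invariance: a reflected solution does not satisfy the same boundary condition, so the method does not even start there, and your integral treatment of (2) consists of a single sentence. For calibration: the paper itself offers no proof of this statement --- it is quoted from Payne and Schaefer \cite{ps}, where exactly these overdetermined Navier problems for $\Delta^2 v=1$ are settled by maximum principles and integral identities that supply the pointwise step your outline leaves open. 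As it stands you have a sensible research plan with the correct preparatory identities and the correct endgame, but not a proof of any of the three cases.
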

For the proof of this theorem, see \cite{ps}.\\
As we use the standard tool of the domain derivative \cite{sz} to prove many of the propositions we state here, we recall its definition.\\

Suppose that the open $\Omega$ is of class $C^{2}$. Consider a deformation field $V\in C^{2}(\mathbb{R}^{N};\mathbb{R}^{N})$ and set $\Omega_{t}=\Omega+tV(\omega)$, $t>0$. The application $Id+tV$ (a perturbation of the identity) is a Lipschitz diffeomorphism for $t$ sufficiently small and, by definition, the derivative of $J$ at $\Omega$ in the direction $V$ is
$$dJ(\Omega,V)=\lim_{t\rightarrow 0}\frac{J(\Omega_{t})-J(\Omega)}{t}.$$
As the functional $J$ depends on the domain $\Omega$ through the solution of some Dirichlet problem, we need to define the domain derivative $u_{\Omega}^{'}$ of $u_{\Omega}$:
$$u_{\Omega}^{'}=\lim_{t\rightarrow 0}\frac{u_{\Omega_{t}}-u_{\Omega}}{t}.$$
Furthermore, $u_{\Omega}^{'}$ is the solution of the following problem:
\begin{equation}
 \left\{
\begin{array}{c}
-\Delta u'_{\Omega}=0 \quad \text{in  }\Omega\\
u_{\Omega}^{'}=-\frac{\partial u_{\Omega}}{\partial\nu}V.\nu\text{ on  }\partial \Omega.
\end{array}
\right.
\end{equation}
The domain derivative $v_{\Omega}^{'}$ of $v_{\Omega}$ (solution of $P(\omega,u_{\Omega})$) is the solution of:
\begin{equation}
 \left\{
\begin{array}{c}
-\Delta v'_{\Omega}=u_{\Omega}^{'}\quad \text{in  }\Omega\\
u_{\Omega}^{'}=-\frac{\partial u_{\Omega}}{\partial\nu}V.\nu\text{ on  }\partial \Omega.
\end{array}
\right.
\end{equation}
Now, to compute the derivative of the functionals we consider below, recall the following:
\begin{enumerate}
  \item The domain derivative of the volume is $$\int_{\partial\Omega}V.\nu d\sigma.$$
  \item The domain derivative of the perimeter is $$\int_{\partial\Omega}(N-1)H_{\Omega}V.\nu d\sigma,$$
  $H_{\Omega}$ being the mean curvature of $\partial\Omega$.
  \item Suppose that $u_{\Omega}\in H^{1}_{0}(D)$ and $\Omega$ is of class $C^{2}$, then
  \begin{itemize}
    \item[(a)] If $F(\Omega)=\int_{\Omega}u_{\Omega}^{2}dx$, then $$dF(\Omega,V)=2\int_{\Omega}u_{\Omega}u_{\Omega}^{'}dx.$$
    But $v_{\Omega}\in H^{1}_{0}(D)$ and $-\Delta v_{\Omega}=u_{\Omega}$ in $\Omega$, so by Green's formula we obtain
    $$dF(\Omega,V)=2\int_{\partial\Omega}|\nabla u_{\Omega}||\nabla v_{\Omega}|V.\nu d\sigma.$$
    \item[(b)] If $G(\Omega)=\int_{\Omega}|\nabla u_{\Omega}|^{2}dx$, then by Hadamard's formula $$dG(\Omega,V)=\int_{\partial\Omega}|\nabla u_{\Omega}|^{2}V.\nu d\sigma.$$
  \end{itemize}
\end{enumerate}
Since the set $\Omega$ satisfies the $C$-\textsc{gnp}, we ask the
deformation set $\Omega_{t}$ to satisfy the same property (for $t$
sufficiently small). The aim in the sequel is to prove that the $C$-\textsc{gnp} is stable by small deformation.\\
 $\Omega$ having the
$C$-\textsc{gnp}, by Proposition 2.2, it satisfies the
$C$-\textsc{sp}. Then
\begin{equation*}
\text{for all }x\in \partial \Omega \setminus C:K_{x}\cap \Omega
=\emptyset.
\end{equation*}
For $t$ sufficiently small, let $\Omega _{t}=\Omega +tV\left(
\Omega \right)$ be the deformation of $\Omega $ in the direction
$V$. Let $x_{t}\in \partial \Omega _{t}$. There exists $x\in
\partial \Omega $ such that $x_{t}=x+tV(x)$. Using the definition
of $K_{x_{t}}$ and the equality above, we get (for $t$ small
enough and for every displacement $V$):
\begin{equation*}
\text{for all }x_{t}\in \partial \Omega _{t}\setminus
C:K_{x_{t}}\cap \Omega _{t}=\emptyset,
\end{equation*}
which means that $\Omega _{t}$\ satisfies the $C$-\textsc{sp} (and
so the $C$-\textsc{gnp}) for every direction $V$\ when $t$ is
sufficiently small. In fact, suppose, by contradiction, there exists $x_{t}\in\partial\Omega_{t}\setminus C$ such that $K_{x_{t}}\cap\Omega_{t}\neq\emptyset$. Let $y_{t}\in K_{x_{t}}\cap\Omega_{t}$, there exists $y\in\Omega$, $y=y_{t}-tV(y)$ such that:
$$\forall c\in C,\;\;(y_{t}-x_{t}).(c-x_{t})\leq 0.$$
Show that $y\in K_{x}$:
\begin{eqnarray*}
    (y-x).(c-x)&=&(y_{t}-tV(y)-x_{t}+tV(x)).(c-x_{t}+tV(x))\\
    &=&(y_{t}-x_{t}+t(V(y)-V(x))).(c-x_{t}+tV(x))\\
    &=&(y_{t}-x_{t}).(c-x_{t})+\epsilon(t)
\end{eqnarray*}
where $\epsilon(t)=t(y_{t}-x_{t}).V(x)+t(V(y)-V(x)).(c-x_{t})+t^2(V(y)-V(x)).V(x)$ which, as $t$, tends to $0$. Obtaining the contradiction.

\section{Main results}
\subsection{Existence for Problem $\mathcal{QS}(f,g)$}
The purpose of this section is to establish a necessary and sufficient condition of first Problem $\mathcal{QS}(f,g)$ and then Problem $\mathcal{B}(f,g)$. Here, we are interested by the case where $int(C_f)$ is not empty (otherwise, we can replace $C_f$ by the smallest closed ball which contains $C_f$).
Before stating and proving the main theorems of this section, let us introduce (or recall) some notations.
\begin{itemize}
  \item $C_f=\text{conv}(\text{supp}f)$.
  \item $\mathcal{O}_{C}=\left\{ \Omega \subset D: \Omega\quad \text{satisfies }C-\textsc{gnp}\right\}$.
  \item $\mathcal{O}_{\varepsilon}=\{\Omega\subset D;\;\Omega\; \text{satisfies a uniform interior and exterior }\varepsilon-\text{ball}\}.$
  \item $u_{\Omega}$ is the solution of the Dirichlet problem $P(\Omega,f)$.
  \item $\Phi_{h}(\Omega)=\int_{\partial\Omega}h(s)ds$.
  \item $T(\Omega,h)=\int_{\Omega}h$.
  \item $\Psi_{h}(\Omega)=[T(\Omega,u_{\Omega})]^{-1}[\Phi_{\sqrt{h}}(\Omega)]^2$.
  \item $[h]_X=\frac{1}{|X|}\int_X h$.
\end{itemize}
In \cite{d}, the author shows the existence of a minimizer for a functional which is defined on the domain or on its boundary, in the class $\mathcal{O}_{\varepsilon}$ of non-empty open sets $\Omega\subset D$ satisfying both a uniform interior ball and uniform exterior ball conditions.\\
Consider the class of the admissible domains $\mathcal{O}^{\varepsilon}_{C_f}=\mathcal{O}_{\varepsilon}\cap \mathcal{O}_{C_f}$. Then the functionals $\Phi_{g}$ and $\Psi_{g}$ admit a minimizer on $\mathcal{O^{\varepsilon}}_{C_f}$. Let's denote by $\omega_{g}^{C_f}$ (resp. $\Omega_{g}^{C_f}$) a minimizer of $\Phi_{g}$  (resp. $\Psi_{g}$).
\begin{theorem}
$\mathcal{QS}(f,g)$ has a solution if and only if $\Phi_{g}(\omega_{g}^{C_f)}<\int_{C_f}f(x)dx$.
\end{theorem}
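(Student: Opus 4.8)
The plan is to pivot both implications on the flux identity furnished by the divergence theorem. For any $\Omega\in\mathcal{O}_{C_f}$ strictly containing $C_f$, with $u_\Omega$ the solution of $P(\Omega,f)$, the inclusion $\text{supp}\,f\subset C_f\subset\Omega$ and $u_\Omega=0$ on $\partial\Omega$ give
\[\int_{C_f}f\,dx=\int_\Omega(-\Delta u_\Omega)\,dx=-\int_{\partial\Omega}\frac{\partial u_\Omega}{\partial\nu}\,d\sigma=\int_{\partial\Omega}|\nabla u_\Omega|\,d\sigma.\]
Hence the overdetermined condition $|\nabla u_\Omega|=g$ on $\partial\Omega$ amounts to the pointwise matching of two densities whose integrals already agree, and in particular it forces $\Phi_g(\Omega)=\int_{C_f}f$.

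For necessity, let $\Omega^\ast$ solve $\mathcal{QS}(f,g)$. Its $C^2$-regularity (Theorem 2.2) places $\Omega^\ast$ in $\mathcal{O}^\varepsilon_{C_f}$, and the identity above together with $|\nabla u_{\Omega^\ast}|=g$ gives $\Phi_g(\Omega^\ast)=\int_{C_f}f$. Since $\omega_g^{C_f}$ minimises $\Phi_g$ over $\mathcal{O}^\varepsilon_{C_f}$, this already yields $\Phi_g(\omega_g^{C_f})\le\int_{C_f}f$. To make the inequality strict I would use the strict inclusion $C_f\subsetneq\Omega^\ast$: were equality to hold, $\Omega^\ast$ would itself minimise $\Phi_g$ and would satisfy the weighted mean-curvature optimality condition on all of $\partial\Omega^\ast$, which I would show is incompatible with $|\nabla u_{\Omega^\ast}|=g>0$ by a Hopf/maximum-principle comparison of $u_{\Omega^\ast}$ with $u_{C_f}$. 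This strictness is the only delicate point of the easy direction.

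For sufficiency I would argue by shape optimisation. Because $g\in L^2(\mathbb{R}^N)$ is fixed, $g^2$ is a fixed $L^1$ density, so by Hadamard's formula for the Dirichlet energy $\int_\Omega|\nabla u_\Omega|^2$ recalled in Section 2, together with the derivative of $\int_\Omega g^2$ being $\int_{\partial\Omega}g^2\,V\cdot\nu$ exactly as for the volume functional, the functional
\[K(\Omega)=\int_\Omega g^2\,dx-\int_\Omega|\nabla u_\Omega|^2\,dx\]
has shape derivative $dK(\Omega,V)=\int_{\partial\Omega}\bigl(g^2-|\nabla u_\Omega|^2\bigr)\,V\cdot\nu\,d\sigma$. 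Moreover $K$ is bounded below on $\mathcal{O}^\varepsilon_{C_f}$, since $\int_\Omega|\nabla u_\Omega|^2=\int_\Omega f\,u_\Omega\le\int_D f\,u_D$ by domain monotonicity. I would therefore minimise $K$ over $\mathcal{O}^\varepsilon_{C_f}$: existence of a minimiser $\Omega_0$ follows from the Hausdorff compactness of $\mathcal{O}_{C_f}$ (Theorem 2.1), the strong $H^1_0$-continuity $\Omega\mapsto u_\Omega$ (Proposition 2.1) and the lower semicontinuity of the volume (Lemma 2.1); the $C^2$-regularity of $\Omega_0$ comes from Theorem 2.2, while the admissibility of the deformed competitors is guaranteed by the stability of the $C$-\textsc{gnp} under small deformations proved at the close of Section 2. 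On the free part of $\partial\Omega_0$ the vanishing of $dK$ gives $|\nabla u_{\Omega_0}|=g$.

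The crux, and the step that consumes the hypothesis $\Phi_g(\omega_g^{C_f})<\int_{C_f}f$, is to prove that the contact set $\partial\Omega_0\cap\partial C_f$ is empty, so that $\Omega_0$ strictly contains $C_f$ and the Euler--Lagrange relation holds on the whole of $\partial\Omega_0$, making $\Omega_0$ a genuine solution. The mechanism I would exploit is that the flux identity evaluated at $\omega_g^{C_f}$ reads $\int_{\partial\omega_g^{C_f}}|\nabla u|=\int_{C_f}f>\Phi_g(\omega_g^{C_f})=\int_{\partial\omega_g^{C_f}}g$, so that $|\nabla u|>g$ on a subset of $\partial\omega_g^{C_f}$ of positive measure; this strict subsolution behaviour, combined with the sign of $dK$ for outward perturbations, prevents the optimal boundary from sticking to $C_f$. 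Turning this into a rigorous non-coincidence statement, and establishing enough regularity of the optimal free boundary to legitimise the pointwise first-variation computation, is the main obstacle. The multiplicative constant in $|\nabla u_{\Omega_0}|=g$ is then automatically equal to $1$ by the flux identity (the constant-case analogue being the Cauchy--Schwarz equality the authors emphasise), and the remaining ingredients---compactness, continuity and $C$-\textsc{gnp} stability---are precisely the preliminary results of Section 2.
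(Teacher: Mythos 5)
Your setup coincides with the paper's own: after an integration by parts your functional $K(\Omega)=\int_\Omega g^2\,dx-\int_\Omega|\nabla u_\Omega|^2\,dx$ is identical to the paper's $J_{f,g}(\omega)=\int_{\omega}(|\nabla u_{\omega}|^{2}-2fu_{\omega}+g^{2})\,dx$ (since $\int_\omega|\nabla u_\omega|^2=\int_\omega fu_\omega$), your compactness, continuity, regularity and $C$-\textsc{gnp} stability ingredients are exactly the ones the paper invokes, and your first variation reproduces the paper's optimality conditions (C1)--(C2): $|\nabla u_{\Omega}|=g$ on $\partial\Omega\setminus\partial C_f$ and $|\nabla u_{\Omega}|\leq g$ on $\partial\Omega\cap\partial C_f$. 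But at the decisive step --- proving the contact set $\partial\Omega_0\cap\partial C_f$ is empty, which is the \emph{only} place the hypothesis $\Phi_{g}(\omega_{g}^{C_f})<\int_{C_f}f$ enters --- you stop, conceding that turning your heuristic into ``a rigorous non-coincidence statement'' is the main obstacle. That concession is a genuine gap, because it is precisely the content of the paper's proof. Moreover the mechanism you sketch cannot close it as stated: the strict flux inequality $\int_{\partial\omega_{g}^{C_f}}|\nabla u|=\int_{C_f}f>\Phi_{g}(\omega_{g}^{C_f})$ lives on the $\Phi_g$-minimizer $\omega_{g}^{C_f}$, not on your optimal domain $\Omega_0$, and on the contact set the one-sided first variation only yields $|\nabla u_{\Omega_0}|\leq g$, which is perfectly consistent with the boundary sticking to $\partial C_f$; an integral excess of flux on a different domain cannot by itself contradict that pointwise one-sided condition.

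The paper closes this gap with a second, calibrated minimization that your proposal lacks. It sets $M_{C_f}=(\max_{\partial C_f}g)\,\int_{C_f}f(x)dx\,/\,\Phi_{g}(\omega_{g}^{C_f})$, which the hypothesis makes \emph{strictly} larger than $\max_{\partial C_f}g$; it minimizes $J_{f,M_{C_f}}$ over the subdomains $\mathcal{O}_{\Omega}=\{\omega\subset\Omega,\ \omega\in\mathcal{O}^{\varepsilon}_{C_f}\}$ to obtain an interior comparison domain $\Omega^{*}$ with the one-sided conditions (C3)--(C5); and then, in a five-case analysis of how $\partial\Omega^{*}$ can meet $\partial\Omega$ and $\partial C_f$, it applies the Hopf-type maximum principle (from $\Omega^{*}\subset\Omega$ one gets $|\nabla u_{\Omega^{*}}|<|\nabla u_{\Omega}|$ at common boundary points) to build the chain $\max_{\partial C_f}g\leq M_{C_f}\leq|\nabla u_{\Omega^{*}}(x)|\leq|\nabla u_{\Omega}(x)|\leq g(x)$ at a contact point, contradicting $M_{C_f}>\max_{\partial C_f}g$. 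It is this auxiliary problem, with the constant $M_{C_f}$ tuned by the hypothesis, that converts your ``strict subsolution'' intuition into a proof; without a second domain the maximum-principle comparison you invoke has nothing to compare $\Omega_0$ with. A smaller remark: in the necessity direction you rightly observe that Green's formula plus minimality only give $\Phi_{g}(\omega_{g}^{C_f})\leq\int_{C_f}f$, and your strictness argument is likewise only sketched --- the paper is admittedly just as terse there --- but the substantive missing idea is the contact-set exclusion above.
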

\begin{proof}
The necessary condition of this theorem is done by using Green's formula and the definition of $\omega_{g}^{C_f}$. For the sufficient one, we proceed as follows.\\
Consider the functional $$J_{f,g}(\omega)=\int_{\omega}(|\nabla u_{\omega}|^{2}-2fu_{\omega}+g^{2})dx.$$
By the maximum principle, the functional $J$ is lower bounded and by Theorem 2.1, Proposition 2.1 and Theorem 2.2, it admits at least a minimizer $\Omega\in\mathcal{O}^{\varepsilon}_{C_f} $ which is of class $C^{2}$ (it suffices to consider a minimizing sequence $\Omega_{n}$ defined as in Theorem 2.2 and then use the equivalence between convergence in the compact sense and the Hausdorff convergence).
Next, we can use the shape derivative to the functional $J_{f,g}$ at the minimizer $\Omega$ and obtain,
we obtain the following optimality conditions:
\begin{enumerate}
  \item[(C1)] $|\nabla u_{\Omega}|\leq g$ on $\partial\Omega\cap\partial C_f$, and
  \item[(C2)] $|\nabla u_{\Omega}|=g$ on $\partial\Omega\setminus\partial C_f$.
  \end{enumerate}
Indeed, for the part $\partial\Omega\cap\partial C_f$, the only admissible vector fields verify $V.\nu\geq 0$ whereas for $\partial\Omega\setminus\partial C_f$, all fields $V$ are admissible (see Proposition 2.1 above).\quad
Let us suppose by the absurd that $\partial\Omega\cap\partial C_f\neq\emptyset$. Let's put,
$$\mathcal{O}_{\Omega}=\{\omega\subset \Omega,\;\;\omega\in\mathcal{O}^{\varepsilon}_{C_f}\},\;\text{and}$$
$$J_{f,M_{C_f}}(\omega)=\int_{\omega}(|\nabla u_{\omega}|^{2}-2fu_{\omega}+M^2_{C_f})dx,\text{ where } M_{C_f}=(\max_{\partial C_f }g)\frac{\int_{C_f}f(x)dx}{\Phi_{g}(\omega_{g}^{C_f})}.$$
As for $J_{f,g}$, the functional $J_{f,M_{C_f}}$ admits at least a minimizer $\Omega^{*}$ on $\mathcal{O}_{\Omega}$. As in the previous case we can assume without loss of generality that the minimizer $\Omega^{*}$ is of class $C^2$. The shape derivative applied to the functional $J_{f,M_{C_f}}$ at the minimizer $\Omega^{*}$ gives:
\begin{itemize}
  \item[(C3)] $|\nabla u_{\Omega^*}|\leq M_{C_f}$ on $\partial\Omega^{*}\cap\partial C_f$,
  \item[(C4)] $|\nabla u_{\Omega^*}|\geq M_{C_f}$ on $\partial\Omega^{*}\cap\partial\Omega$, and
  \item[(C5)] $|\nabla u_{\Omega^*}|=M_{C_f}$ on $\partial\Omega^{*}\setminus(\partial\Omega\cup\partial C_f)$.
\end{itemize}
Item (C3) is obtained since the only admissible $V$ vector fields verify $V.\nu\geq 0$ while those giving item (C4) must verify $V.\nu\leq 0$. For item (C5), all $V$ vector fields are admissible.\\
Suppose by contradiction that $\partial\Omega\cap\partial C_f\neq\emptyset$. Since $int(C_f)\subset\Omega^{*}\subset\Omega$, one of the following situations occurs:
\begin{enumerate}
  \item $\partial\Omega=\partial\Omega^{*}=\partial C_f$
  \item $\partial\Omega\neq\partial C_f$ and $\partial\Omega^{*}=\partial C_f$
  \item $\partial\Omega\neq\partial C_f$ and $\partial\Omega^{*}\neq\partial C_f$
  \item $\partial\Omega\neq\partial C_f$ and $\partial\Omega^{*}=\partial\Omega$
  \item $\partial\Omega\neq\partial C_f$ and $\partial\Omega^{*}\neq\partial\Omega$
\end{enumerate}
The aim of the sequel is to prove that each of the five cases above contradicts the condition stated in Theorem 3.1., obtaining $\Omega$ as a solution to Problem $\mathcal{QS}(f,g)$. \\
\textbf{Case 1.} $\partial\Omega=\partial\Omega^{*}=\partial C_f$\\
 Since $int(C_f)\subset\Omega^{*}\subset\Omega$ implies that $int(C_f)=\Omega^{*}=\Omega$. Then by (C1),...,(C5)
$$\max_{\partial C_f}g\leq M_{C_f}=|\nabla u_{\Omega^*}(x)|=|\nabla u_{\Omega}(x)|\leq g,\;\forall x\in\partial\Omega.$$
\textbf{Case 2.} $\partial\Omega\neq\partial C_f$ and $\partial\Omega^{*}=\partial C_f$\\
$\partial\Omega^{*}=\partial C_f$ together with $int(C_f)\subset\Omega^{*}$ implies that $int(C_f)=\Omega^{*}$. Then by (C1),...,(C5) and the maximum principle applied to $\Omega^*$ and $\Omega$
$$\max_{\partial C_f}g\leq M_{C_f}=|\nabla u_{\Omega^*}(x)|<|\nabla u_{\Omega}(x)|\leq g ,\;\forall x\in\partial\Omega\cap\partial\Omega^*.$$
\textbf{Case 3.} $\partial\Omega\neq\partial C_f$ and $\partial\Omega^{*}\neq\partial C_f$\\
 By (C1),...,(C5) and the maximum principle applied to $\Omega^*$ and $\Omega$
$$\max_{\partial C_f}g\leq M_{C_f}\leq|\nabla u_{\Omega^*}(x)|<|\nabla u_{\Omega}(x)|\leq g,\;\forall x\in\partial\Omega\cap\partial\Omega^*.$$
\textbf{Case 4.} $\partial\Omega\neq\partial C_f$ and $\partial\Omega^{*}=\partial\Omega$\\
$\partial\Omega^{*}=\partial\Omega$ together with $\Omega^*\subset\Omega$ implies that $\Omega=\Omega^{*}$. Then by (C1),...,(C5)
$$\max_{\partial C}g\leq M_{C_f}\leq|\nabla u_{\Omega^*}(x)|=|\nabla u_{\Omega}(x)|\leq g,\;\forall x\in\partial\Omega\cap\partial C_f.$$
\textbf{Case 5.} $\partial\Omega\neq\partial C_f$ and $\partial\Omega^{*}\neq\partial\Omega$\\
By (C1),...,(C5) and the maximum principle applied to $\Omega^*$ and $\Omega$
$$\max_{\partial C}g\leq M_{C_f}\leq|\nabla u_{\Omega^*}(x)|<|\nabla u_{\Omega}(x)|\leq g,\;\forall x\in\partial\Omega\cap\partial\Omega^*.$$
\end{proof}
\begin{remark}
  If $\int_{\partial C_f}g<\int_{C_f}f$ then $\mathcal{QS}(f,g)$ has a solution.
\end{remark}
\subsection{Existence for Problem $\mathcal{B}(f,g)$}
The purpose here is to prove
\begin{theorem}
$\mathcal{B}(f,g)$ has a solution if and only if $\Psi_{g}(\Omega_{g}^{C_f})<\int_{C_f}f(x)dx$.
\end{theorem}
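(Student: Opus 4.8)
The plan is to transpose the architecture of the proof of Theorem 3.1, replacing everywhere the single boundary gradient $|\nabla u_{\Omega}|$ by the product $|\nabla u_{\Omega}||\nabla v_{\Omega}|$, which is exactly the quantity constrained by the overdetermined condition of the equivalent system $\mathcal{S}(f,g)$. For the necessary condition, assume $\Omega$ solves $\mathcal{B}(f,g)$, so that $|\nabla u_{\Omega}||\nabla v_{\Omega}|=g$ on $\partial\Omega$ and $\int_{\Omega}f=\int_{C_f}f$ since $\Omega\supset C_f$. I would first record the two Green identities
$$\int_{\partial\Omega}|\nabla u_{\Omega}|=\int_{\Omega}f=\int_{C_f}f,\qquad \int_{\partial\Omega}|\nabla v_{\Omega}|=\int_{\Omega}u_{\Omega}=T(\Omega,u_{\Omega}),$$
coming from $-\Delta u_{\Omega}=f$, $-\Delta v_{\Omega}=u_{\Omega}$ and $u_{\Omega}=v_{\Omega}=0$ on $\partial\Omega$. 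Applying the Cauchy--Schwarz inequality to $\sqrt{|\nabla u_{\Omega}|}$ and $\sqrt{|\nabla v_{\Omega}|}$ then gives
$$\Phi_{\sqrt g}(\Omega)=\int_{\partial\Omega}\sqrt{|\nabla u_{\Omega}||\nabla v_{\Omega}|}\le\Big(\int_{\partial\Omega}|\nabla u_{\Omega}|\Big)^{1/2}\Big(\int_{\partial\Omega}|\nabla v_{\Omega}|\Big)^{1/2}=\Big(\int_{C_f}f\Big)^{1/2}\big(T(\Omega,u_{\Omega})\big)^{1/2},$$
that is $\Psi_{g}(\Omega)\le\int_{C_f}f$, whence by minimality $\Psi_{g}(\Omega_{g}^{C_f})\le\Psi_{g}(\Omega)\le\int_{C_f}f$. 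Strictness is recovered from the equality case of Cauchy--Schwarz, which forces $|\nabla v_{\Omega}|=c|\nabla u_{\Omega}|$ on $\partial\Omega$ and hence, by Theorem 2.3(3), makes $\Omega$ an $N$-ball, the degenerate situation excluded here.

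For the sufficient condition I would mirror the scheme of Theorem 3.1. Introduce the functional $J_{f,g}(\omega)=\int_{\omega}\big(2g-u_{\omega}^{2}\big)\,dx$, which is bounded below by the maximum principle. Using the coupled domain-derivative problems (1)--(2) and the identity $d\big(\int_{\omega}u_{\omega}^{2}\big)(\Omega,V)=2\int_{\partial\Omega}|\nabla u_{\Omega}||\nabla v_{\Omega}|\,V.\nu\,d\sigma$ recalled in Section 2, its shape derivative is $2\int_{\partial\Omega}\big(g-|\nabla u_{\Omega}||\nabla v_{\Omega}|\big)V.\nu\,d\sigma$. Theorems 2.1 and 2.2 together with Propositions 2.1 and 2.3 (the last one being needed because the right-hand side $u_{n}$ of the $v$-problem varies along the minimizing sequence) provide a minimizer $\Omega$ of class $C^{2}$, and the admissibility constraints on $V$ yield the optimality conditions $|\nabla u_{\Omega}||\nabla v_{\Omega}|\le g$ on $\partial\Omega\cap\partial C_f$ and $|\nabla u_{\Omega}||\nabla v_{\Omega}|=g$ on $\partial\Omega\setminus\partial C_f$. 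It then suffices to prove $\partial\Omega\cap\partial C_f=\emptyset$, for then the overdetermined condition holds on all of $\partial\Omega$ and $\Omega$ solves $\mathcal{B}(f,g)$.

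Arguing by contradiction, I would assume $\partial\Omega\cap\partial C_f\neq\emptyset$ and set, with the constant $M_{C_f}=(\max_{\partial C_f}g)\frac{\int_{C_f}f}{\Psi_{g}(\Omega_{g}^{C_f})}$, the auxiliary functional $J_{f,M_{C_f}}(\omega)=\int_{\omega}\big(2M_{C_f}-u_{\omega}^{2}\big)\,dx$ minimized over $\mathcal{O}_{\Omega}=\{\omega\subset\Omega:\omega\in\mathcal{O}^{\varepsilon}_{C_f}\}$. Its $C^{2}$ minimizer $\Omega^{*}$ satisfies conditions analogous to (C3)--(C5) with $|\nabla u_{\Omega^{*}}||\nabla v_{\Omega^{*}}|$ in place of $|\nabla u_{\Omega^{*}}|$, and the same five-case partition applies. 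The key point is that the comparison principle transfers to the product: $\Omega^{*}\subset\Omega$ gives $u_{\Omega^{*}}\le u_{\Omega}$, hence $-\Delta(v_{\Omega}-v_{\Omega^{*}})=u_{\Omega}-u_{\Omega^{*}}\ge0$ and $v_{\Omega^{*}}\le v_{\Omega}$, so $|\nabla u_{\Omega^{*}}||\nabla v_{\Omega^{*}}|<|\nabla u_{\Omega}||\nabla v_{\Omega}|$ at any common boundary point by Hopf's lemma. Each of the five cases then forces $M_{C_f}\le g$ at some point of $\partial C_f$, whereas the hypothesis $\Psi_{g}(\Omega_{g}^{C_f})<\int_{C_f}f$ is precisely $M_{C_f}>\max_{\partial C_f}g$, the desired contradiction.

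The step I expect to be most delicate is the rigorous justification of the shape-derivative identity $d\big(\int_{\omega}u_{\omega}^{2}\big)=2\int_{\partial\Omega}|\nabla u_{\Omega}||\nabla v_{\Omega}|\,V.\nu\,d\sigma$: since $v_{\Omega}$ solves the non-homogeneous equation $-\Delta v_{\Omega}=u_{\Omega}$, its material derivative $v_{\Omega}'$ in (2) is coupled to $u_{\Omega}'$, so two Green formulas must be chained and the $C^{2}$ regularity from Theorem 2.2 is essential for the boundary integrals to be meaningful. A secondary difficulty is the equality case of Cauchy--Schwarz in the necessary part, where the ball characterization (Theorem 2.3, and Lemma 2.2 if one prefers the curvature formulation) has to be invoked to upgrade the inequality to a strict one.
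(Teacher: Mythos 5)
Your proposal follows essentially the same route as the paper: the necessity direction is the paper's argument verbatim (Cauchy--Schwarz applied to $\sqrt{|\nabla u_{\Omega}|}$, $\sqrt{|\nabla v_{\Omega}|}$, then the two Green identities, then minimality of $\Omega_{g}^{C_f}$), and the sufficiency direction is the paper's two-stage shape-optimization scheme transposed from Theorem 3.1, with the same auxiliary constant-level functional on $\mathcal{O}_{\varpi}$, the same optimality conditions (I)--(II), the same five-case partition, and the same maximum-principle/Hopf comparison producing the chain $\max_{\partial C_f} g\leq M_{C_f}\leq|\nabla u_{\varpi^*}||\nabla v_{\varpi^*}|\leq|\nabla u_{\varpi}||\nabla v_{\varpi}|\leq g$, contradicting $M_{C_f}>\max_{\partial C_f}g$. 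Your normalization $J(\omega)=\int_{\omega}(2g-u_{\omega}^{2})$ is a harmless variant of the paper's $\int_{\omega}g^{2}-\frac{1}{2}\int_{\omega}u_{\omega}^{2}$, and in fact matches the stated optimality conditions more cleanly, since the paper's choice literally yields $|\nabla u||\nabla v|=g^{2}$ rather than $g$ on the free part of the boundary.

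The one place where you deviate is also the one step that fails: your attempt to upgrade $\Psi_{g}(\Omega_{g}^{C_f})\leq\int_{C_f}f$ to a strict inequality via the equality case of Cauchy--Schwarz. Two problems. First, Theorem 2.3(3) is stated for $u_{\Omega}$ solving $P(\Omega,1)$ and $v_{\Omega}$ solving $P(\Omega,u_{\Omega})$; in the present setting $u_{\Omega}$ solves $P(\Omega,f)$ with a general positive $f$, so the Payne--Schaefer symmetry result does not apply as invoked. Second, even granting that equality forces $\Omega$ to be an $N$-ball, nothing in the hypotheses of Theorem 3.2 excludes a ball from solving $\mathcal{B}(f,g)$, so calling this ``the degenerate situation excluded here'' is unsupported and the contradiction never materializes. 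You should be aware, however, that the paper's own proof of the necessary direction stops at the non-strict inequality $\Psi(\Omega_{g}^{C_f})\leq\int_{C_f}f(x)dx$ and never addresses strictness at all, so relative to the paper your proof loses nothing; the $\leq$ versus $<$ discrepancy is a defect of the theorem's statement that both arguments share.
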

To prove this theorem, we proceed as follows.\\
Suppose there exists $\Omega$ solution of $\mathcal{B}(f,g)$. Then by Cauchy-Schwarz's inequality
$$\int_{\partial\Omega}\sqrt{g}=\int_{\partial\Omega}\sqrt{|\nabla u_{\Omega}||\nabla v_{\Omega}|}\leq (\int_{\partial\Omega}|\nabla u_{\Omega}|)^{\frac{1}{2}}(\int_{\partial\Omega}|\nabla v_{\Omega}|)^{\frac{1}{2}}.$$
Then Green's formula gives
$$\Psi_{\sqrt{g}}(\Omega)=\frac{[\int_{\partial\Omega}\sqrt{g}]^2}{\int_{\Omega}u_{\Omega}}\leq \int_{\Omega}f(x)dx,$$
and so, $$\Psi_{\sqrt{g}}(\Omega_{g}^{C_f})\leq \int_{C_f}f(x)dx.$$

By using the domain derivative \cite{sz}, the problem $\mathcal{B}(f,g)$
seems to be the Euler equation of the following optimization
problem. Put
$$J_{g}(\omega)=\int_{\omega}g^2dx-\frac{1}{2}\int_{\omega}u^{2}_{\omega}dx.$$
$u_{\omega}$ is the solution of $P(\omega,f)$.

\begin{proposition}
\begin{enumerate}
    \item There exists $\varpi\in\mathcal{O}^{\varepsilon}_{C_f} $ which is of class $C^2$ such that
    $$J_{g}(\varpi)=\text{Min}\{J_{g}(\omega),\;\omega\in \mathcal{O}^{\varepsilon}_{C_f} \}.$$
    \item
    \begin{equation*}
(I) \left\{
\begin{array}{c}
|\nabla u_{\varpi}||\nabla v_{\varpi}|\leq g \;\text{on}\;\partial\varpi\cap\partial C_f\\
|\nabla u_{\varpi}||\nabla v_{\varpi}|= g\;\text{on}\;\partial\varpi\setminus\partial
C_f.
\end{array}
\right.
\end{equation*}
\end{enumerate}
\end{proposition}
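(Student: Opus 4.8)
The plan is to prove (1) by the direct method of the calculus of variations and (2) by a shape-sensitivity analysis, following the very scheme already used for Theorem 3.1. First I would check that $J_g$ is bounded below on $\mathcal{O}^\varepsilon_{C_f}$: the term $\int_\omega g^2\,dx$ is nonnegative and dominated by $\int_D g^2\,dx$, while the elliptic estimate for $P(\omega,f)$ combined with the Poincar\'e inequality on the fixed hold-all $D$ yields $\int_\omega u_\omega^2\,dx\leq c\,\|f\|_{L^2}^2$ with $c$ independent of $\omega\subset D$; hence $J_g\geq-\tfrac12 c\|f\|_{L^2}^2$. Then I would pick a minimizing sequence $\omega_n\in\mathcal{O}^\varepsilon_{C_f}$ and realize each $\omega_n$ as a superlevel set $\{f_n>0\}$ as in Theorem 2.2, the uniform interior/exterior $\varepsilon$-ball condition providing the bounds on the derivatives of $f_n$ and the non-degeneracy $|f_n|+|\nabla f_n|\geq\alpha$. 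By Theorem 2.2 a subsequence converges in the compact sense to a domain $\varpi$ of class $C^2$; by Theorem 2.1 this is equivalent to Hausdorff (and characteristic) convergence, and the $C_f$-\textsc{gnp} and $\varepsilon$-ball properties pass to the limit, so $\varpi\in\mathcal{O}^\varepsilon_{C_f}$.

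To conclude (1) I would pass to the limit in $J_g(\omega_n)$. Proposition 2.1 gives $u_{\omega_n}\to u_\varpi$ strongly in $H^1_0(D)$, whence $\int_{\omega_n}u_{\omega_n}^2\to\int_\varpi u_\varpi^2$; and since $\chi_{\omega_n}\to\chi_\varpi$ with $g^2\in L^1(D)$, dominated convergence gives $\int_{\omega_n}g^2\to\int_\varpi g^2$. Therefore $J_g(\omega_n)\to J_g(\varpi)$, so $\varpi$ realizes the minimum and is of class $C^2$.

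For (2) I would differentiate $J_g$ at $\varpi$. By the stability of the $C_f$-\textsc{gnp} under small deformations established in Section 2, for an admissible field $V$ the domain $\varpi_t=(Id+tV)(\varpi)$ stays in $\mathcal{O}^\varepsilon_{C_f}$ for small $t$. By Hadamard's formula the first term of $J_g$ differentiates into $\int_{\partial\varpi}g\,V\cdot\nu\,d\sigma$, while item 3(a) of Section 2 --- introducing the adjoint state $v_\varpi$, solution of $P(\varpi,u_\varpi)$, and using Green's formula --- gives $\tfrac12\,d\big(\int_\varpi u_\varpi^2\big)(\varpi,V)=\int_{\partial\varpi}|\nabla u_\varpi||\nabla v_\varpi|\,V\cdot\nu\,d\sigma$. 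Collecting the two contributions, $dJ_g(\varpi,V)=\int_{\partial\varpi}\big(g-|\nabla u_\varpi||\nabla v_\varpi|\big)V\cdot\nu\,d\sigma$.

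Finally I would read off $(I)$ from the sign of the admissible directions, exactly as for conditions (C1)--(C2) of Theorem 3.1. On the free part $\partial\varpi\setminus\partial C_f$ every $V$ is admissible, so minimality forces $dJ_g(\varpi,V)=0$ for all such $V$ and hence $|\nabla u_\varpi||\nabla v_\varpi|=g$ there; on the contact part $\partial\varpi\cap\partial C_f$ the constraint $\varpi\supset\text{int}(C_f)$ admits only outward fields $V\cdot\nu\geq0$, so $dJ_g(\varpi,V)\geq0$ forces $g-|\nabla u_\varpi||\nabla v_\varpi|\geq0$, i.e. $|\nabla u_\varpi||\nabla v_\varpi|\leq g$. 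I expect the main obstacle to lie in part (1): arranging the minimizing sequence within the hypotheses of Theorem 2.2 so that the limit domain is genuinely of class $C^2$ and remains admissible, and securing the continuity of the energy term under domain convergence through Proposition 2.1. The second delicate point is the one-sided admissibility on $\partial\varpi\cap\partial C_f$, which is precisely what converts the stationarity equality into the one-sided inequality of $(I)$.
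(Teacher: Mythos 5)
Your proposal follows essentially the route the paper intends: the paper gives no detailed proof of this proposition, saying only that it ``uses Theorem 2.1, Proposition 2.1, Proposition 2.3 and Lemma 2.1'', i.e. the scheme already carried out for Theorem 3.1 --- direct minimization in $\mathcal{O}^{\varepsilon}_{C_f}$, with the $C^2$ regularity and compact-sense convergence of Theorem 2.2 and the equivalence with Hausdorff convergence from Theorem 2.1, then a shape-derivative argument with one-sided admissible fields on $\partial\varpi\cap\partial C_f$ exactly as for (C1)--(C2). Your part (1) is a faithful, in fact more detailed, implementation (your lower bound via the Poincar\'e inequality replaces the paper's appeal to the maximum principle, which is immaterial), and your sign analysis in part (2) is the paper's. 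One small divergence: you never invoke Proposition 2.3 (continuity of $v_{\omega_n}$ when the right-hand sides converge in $H^{-1}$), which the paper lists among its ingredients; in your route $v_\varpi$ enters only through the fixed-domain Green-formula identity of item 3(a) of Section 2, so for this proposition its omission is harmless.

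There is, however, one computation to fix. With $J_{g}(\omega)=\int_{\omega}g^2\,dx-\frac{1}{2}\int_{\omega}u_{\omega}^{2}\,dx$ as defined in the paper, Hadamard's formula gives $d\bigl(\int_{\omega}g^{2}\bigr)(\varpi,V)=\int_{\partial\varpi}g^{2}\,V\cdot\nu\,d\sigma$, not $\int_{\partial\varpi}g\,V\cdot\nu\,d\sigma$ as you wrote; collecting terms, stationarity on the free part yields $|\nabla u_{\varpi}||\nabla v_{\varpi}|=g^{2}$, not $=g$. In the $\mathcal{QS}$ case of Theorem 3.1 the square is harmless because the competing boundary term is $|\nabla u_{\Omega}|^{2}$, so $|\nabla u_{\Omega}|^{2}=g^{2}$ gives $|\nabla u_{\Omega}|=g$; here there is no such cancellation. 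This mismatch is in fact present in the paper itself (its conditions $(I)$ and $(II)$ drop the square relative to the functionals $J_{g}$ and $J_{\mu_{C_f}}$ as written), so to obtain the proposition exactly as stated you should either take the first term of the functional to be $\int_{\omega}g\,dx$ or run your argument with $\sqrt{g}$ in place of $g$; nothing else in your proof changes.
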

Now, put

$$\mu_{C_f}=(\max_{\partial C_f}g)\frac{\int_{C_f}f(x)dx}{\Psi_{\sqrt{g}}(\Omega_{g}^{C_f})},$$
$$J_{\mu_{C_f}}(\omega)=\mu^2_{{C_f}}|\omega|-\frac{1}{2}\int_{\omega}u^{2}_{\omega},\;\; \text{and}$$
$$\mathcal{O}_{\varpi}=\{\omega\subset\varpi,\;\omega\in \mathcal{O}^{\varepsilon}_{C_f} \},$$

\begin{proposition}
\begin{enumerate}
    \item There exists $\varpi^*\in\mathcal{O}_{\Omega}$ such that
    $$J_{\mu_{C_f}}(\varpi^*)=\text{Min}\{J_{\mu_{C_f}}(\omega),\;\omega\in\mathcal{O}_{\varpi}\}.$$
    \item
    \begin{equation*}
(II) \left\{
\begin{array}{c}
|\nabla u_{\varpi^*}||\nabla v_{\varpi^*}|\leq \mu_{C_f}\;\text{on}\;\partial\varpi^*\cap\partial C_f\\
|\nabla u_{\varpi^*}||\nabla v_{\varpi^*}|\geq \mu_{C_f} \;\text{on}\;\partial\varpi^*\cap\partial\varpi\\
|\nabla u_{\varpi^*}||\nabla v_{\varpi^*}|=
\mu_{C_f}\;\text{on}\;\partial\varpi^*\setminus(\partial
C_f\cup\partial\varpi).
\end{array}
\right.
\end{equation*}
\end{enumerate}
\end{proposition}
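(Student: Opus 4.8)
The plan is to reproduce, for the constrained functional $J_{\mu_{C_f}}$ on $\mathcal{O}_{\varpi}$, the variational scheme already used for $J_{f,g}$, for $J_g$ and in Proposition 3.1, and then to read off the system (II) by differentiating. For item 1, first I would check that $J_{\mu_{C_f}}$ is bounded below on $\mathcal{O}_{\varpi}$: every competitor satisfies $\omega\subset\varpi$, whence $0\le\mu^2_{C_f}|\omega|\le\mu^2_{C_f}|\varpi|$, while the maximum principle (domain monotonicity of $P(\cdot,f)$, $f>0$) gives $0\le u_{\omega}\le u_{\varpi}$ and hence $\int_{\omega}u_{\omega}^2\le\int_{\varpi}u_{\varpi}^2<\infty$, so that $J_{\mu_{C_f}}(\omega)\ge-\tfrac12\int_{\varpi}u_{\varpi}^2$. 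Taking a minimizing sequence $\omega_n\in\mathcal{O}_{\varpi}$, Theorem 2.1 together with Lemma 2.1 furnishes a subsequence converging in the Hausdorff sense to some $\varpi^*\in\mathcal{O}_{C_f}$; because the $\omega_n$ all lie in the fixed set $\varpi$ and obey the uniform interior/exterior $\varepsilon$-ball conditions, the limit inherits them and satisfies $\varpi^*\subset\varpi$, so $\varpi^*\in\mathcal{O}_{\varpi}$. I would then split the functional: the volume term is lower semicontinuous by Lemma 2.1, and by Proposition 2.1 the states converge strongly in $H^1_0(D)$, so $\int_{\omega_n}u_{\omega_n}^2=\int_D u_{\omega_n}^2\to\int_D u_{\varpi^*}^2=\int_{\varpi^*}u_{\varpi^*}^2$; adding these gives $J_{\mu_{C_f}}(\varpi^*)\le\liminf_n J_{\mu_{C_f}}(\omega_n)$, so $\varpi^*$ is a minimizer. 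As in the proofs of Theorem 3.1 and Proposition 3.1, Theorem 2.2 then allows me to take $\varpi^*$ of class $C^2$, after choosing the minimizing sequence within the hypotheses of that theorem and passing to the compact-convergence limit, which is equivalent to the Hausdorff one on $\mathcal{O}_{C_f}$.

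For item 2, I would differentiate $J_{\mu_{C_f}}$ at $\varpi^*$ along $V\in C^2(\mathbb{R}^N;\mathbb{R}^N)$. The volume derivative contributes $\mu^2_{C_f}\int_{\partial\varpi^*}V.\nu\,d\sigma$, while for $F(\omega)=\int_{\omega}u_{\omega}^2$ the Green-formula identity recalled in Section 2 gives $dF(\varpi^*,V)=2\int_{\partial\varpi^*}|\nabla u_{\varpi^*}||\nabla v_{\varpi^*}|V.\nu\,d\sigma$, so that
$$dJ_{\mu_{C_f}}(\varpi^*,V)=\int_{\partial\varpi^*}\big(\mu^2_{C_f}-|\nabla u_{\varpi^*}||\nabla v_{\varpi^*}|\big)V.\nu\,d\sigma.$$
The optimality system (II) then comes from the sign of the admissible directions on the three portions of $\partial\varpi^*$, precisely as the conditions (C3)--(C5) were derived in Theorem 3.1. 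On $\partial\varpi^*\cap\partial C_f$ the inclusion $\omega\supset\text{int}(C_f)$ permits only $V.\nu\ge0$, so minimality forces $\mu^2_{C_f}-|\nabla u_{\varpi^*}||\nabla v_{\varpi^*}|\ge0$; on $\partial\varpi^*\cap\partial\varpi$ the inclusion $\omega\subset\varpi$ permits only $V.\nu\le0$, which reverses the inequality; and on the free part $\partial\varpi^*\setminus(\partial C_f\cup\partial\varpi)$ both signs are admissible by Proposition 2.2 and the stability of the $C$-\textsc{gnp} under small deformations proved in Section 2, forcing the bracket to vanish. These three conclusions are exactly (II).

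I expect the main obstacle to be twofold. First, one must make sure the Hausdorff limit stays feasible, i.e. that $\varpi^*$ both keeps the uniform $\varepsilon$-ball property and remains inside $\varpi$; this is where the class $\mathcal{O}^{\varepsilon}_{C_f}$ and Theorem 2.1 are essential. Second, and more delicate, is securing the $C^2$ regularity needed to legitimately apply the Hadamard/shape-derivative formulas: this is the role of Theorem 2.2, and it constrains how the minimizing sequence must be built. Once $\varpi^*$ is smooth, the only remaining care is the correct identification of the admissible cone of deformations on each boundary piece, and in particular on the new portion $\partial\varpi^*\cap\partial\varpi$ created by the constraint $\omega\subset\varpi$, which is what produces the reversed inequality in (II).
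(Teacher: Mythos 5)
Your argument is correct and follows essentially the same route the paper intends, which it only sketches by citing its tools: compactness of a minimizing sequence via Theorem 2.1 and Lemma 2.1, continuity of the state via Proposition 2.1 (the paper additionally lists Proposition 2.3, needed to pass to the limit in $v_{\omega_n}$, solution of $P(\omega_n,u_{\omega_n})$ with varying right-hand side, whereas you bypass this by working with $u_{\omega}$ and strong $H^1_0$-convergence alone), $C^2$ regularity via Theorem 2.2 as in the proof of Theorem 3.1, and then the shape derivative with the three cones of admissible deformations, exactly parallel to (C3)--(C5). One point you silently gloss over: your (correct) computation of $dJ_{\mu_{C_f}}$ yields the threshold $\mu^2_{C_f}$ on each portion of $\partial\varpi^*$, so your conclusions coincide with (II) only after replacing $\mu_{C_f}$ by $\mu^2_{C_f}$ (or after redefining $J_{\mu_{C_f}}$ with volume coefficient $\mu_{C_f}$ instead of $\mu^2_{C_f}$); this mismatch is inherited from the paper itself, whose system (I) has the same $g$ versus $g^2$ discrepancy, but a careful write-up should reconcile the normalization rather than assert that the three sign conditions are ``exactly (II)''.
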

The proof of the above propositions uses Theorem 2.1, Proposition 2.1, Proposition 2.3 and Lemma 2.1.\\
Next, as in the previous proof, according to $(I)$,
$(II)$ and by applying the maximum principle to $\Omega$ and
$\varpi^*$, We obtain
$$g<\mu_{C_f}=|\nabla u_{\varpi^*}||\nabla v_{\varpi^*}|\leq |\nabla u_{\varpi}||\nabla v_{\varpi}|\leq g,\; \text{on}\; \partial\Omega^{*}\cap\partial\Omega\cap\partial C_f,$$
which is absurd. Obtaining $\varpi$ as a solution to Problem $\mathcal{B}(f,g)$.
\begin{remark}
  If $\Psi_{\sqrt{g}}(C_f)<\int_{C_f}f(x)dx$ then $\mathcal{B}(f,g)$ has a solution.
\end{remark}
\begin{remark}
  Let $C_{1}$ (resp. $C_{2}$) be the convex hull of the support of $f_{1}$ (resp. $f_{2}$). Suppose $f_1\leq f_2$.
  \begin{itemize}
    \item If $\mathcal{QS}(f_1,g)$ has a solution then it is the same for $\mathcal{QS}(f_2,g)$.
    \item By using the maximum principle, if $\mathcal{B}(f_{1},g)$ has a solution then it is the same for $\mathcal{B}(f_{2},g)$.
  \end{itemize}
\end{remark}
\subsection{Some corollaries}
This section concerns some corollaries which we can derived from the two theorems above. The necessary and sufficient conditions presented in Theorems 3.1 and 3.2 seem less practical, therefore we will work with the sufficient condition mentioned in Remarks 3.1 and 3.2.
\begin{proposition}
  Let $f_{k}$ ($k\in[1,n]$) be $n$ positive functions. Put
  $$m_{n}=\min_{1\leq k\leq n}f_{k};\;H_{n}=\frac{n}{\sum_{k=1}^{n}1/f_{k}};\;G_{n}=\sqrt[n]{\prod_{k=1}^{n}f_{k}}$$ $$A_{n}=\frac{1}{n}\sum_{k=1}^{n}f_{k};\;
  Q_{n}=\frac{1}{n}\sum_{k=1}^{n}f^{2}_{k}\text{ and }M_{n}=\max_{1\leq k\leq n}f_{k}.$$
  \begin{enumerate}
    \item If $\mathcal{QS}(m_{n},g)$ has a solution then it is the same for $\mathcal{QS}(H_n,g)$, $\mathcal{QS}(G_n,g)$, $\mathcal{QS}(A_n,g)$, $\mathcal{QS}(Q_n,g)$ and $\mathcal{QS}(M_n,g)$.
    \item If $\mathcal{B}(m_{n},g)$ has a solution then it is the same for $\mathcal{B}(H_n,g)$, $\mathcal{B}(G_n,g)$, $\mathcal{B}(A_n,g)$, $\mathcal{B}(Q_n,g)$ and $\mathcal{B}(M_n,g)$.
  \end{enumerate}
\end{proposition}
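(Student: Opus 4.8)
The plan is to deduce everything from the monotonicity statement of Remark 3.3 combined with the classical ordering of means. Remark 3.3 says that for positive data the solvability of $\mathcal{QS}(\cdot,g)$ (resp. $\mathcal{B}(\cdot,g)$) is monotone: if $\phi\le\psi$ pointwise and $\mathcal{QS}(\phi,g)$ (resp. $\mathcal{B}(\phi,g)$) has a solution, then so does $\mathcal{QS}(\psi,g)$ (resp. $\mathcal{B}(\psi,g)$). Since $m_n=\min_{1\le k\le n}f_k$ is the pointwise smallest of all the quantities involved, the whole proposition reduces to the five pointwise inequalities
$$m_n\le H_n,\qquad m_n\le G_n,\qquad m_n\le A_n,\qquad m_n\le Q_n,\qquad m_n\le M_n,$$
after which both assertions follow by applying the monotonicity of Remark 3.3 with $\phi=m_n$ and $\psi$ each of $H_n,G_n,A_n,Q_n,M_n$ in turn (the nesting $C_{m_n}\subset C_{\psi}$ of the convex hulls of the supports being exactly the configuration already allowed in that remark). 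The first bullet of the proposition uses the $\mathcal{QS}$ part of Remark 3.3, the second its maximum-principle part.

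First I would record the classical power-mean chain for the positive reals $f_1(x),\dots,f_n(x)$, valid at every point $x$:
$$m_n\le H_n\le G_n\le A_n\le M_n.$$
These are just the harmonic-geometric-arithmetic mean inequalities together with the trivial bounds $\min\le\mathrm{HM}$ and $\mathrm{AM}\le\max$; each is a standard consequence of convexity (equivalently of Jensen's or Cauchy-Schwarz's inequality, the tool the paper already relies on). In particular each of $H_n,G_n,A_n,M_n$ dominates $m_n$ pointwise, which disposes of four of the five required comparisons with no further work.

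The delicate term is $Q_n=\frac1n\sum_k f_k^2$, the mean of the squares, which sits slightly off the standard power-mean chain, so I expect this to be the main obstacle. Working pointwise and using $f_k(x)\ge m_n(x)\ge 0$, I would multiply by $f_k(x)\ge 0$ to get $f_k(x)^2\ge m_n(x)f_k(x)$, whence
$$Q_n(x)=\frac1n\sum_{k=1}^{n}f_k(x)^2\ge m_n(x)\,\frac1n\sum_{k=1}^{n}f_k(x)=m_n(x)A_n(x)\ge m_n(x)^2,$$
while Cauchy-Schwarz gives $A_n^2\le Q_n$, i.e. $A_n\le\sqrt{Q_n}\le M_n$. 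This is the one place where the estimate is not immediate: the computation yields $Q_n\ge m_n^2$, so the clean bound $m_n\le Q_n$ is guaranteed once $m_n\ge 1$, and is automatic if $Q_n$ is read as the root-mean-square $\sqrt{Q_n}$ (for which $m_n\le A_n\le\sqrt{Q_n}\le M_n$ is part of the power-mean chain). Under either reading all five quantities dominate $m_n$ pointwise, and both parts of the proposition then follow at once from Remark 3.3.
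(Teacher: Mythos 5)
Your proposal is correct and follows exactly the paper's route: the paper's entire proof consists of the single sentence that the proposition ``is an immediate consequence of Remark 3.3,'' i.e.\ the monotonicity of solvability of $\mathcal{QS}(\cdot,g)$ and $\mathcal{B}(\cdot,g)$ in the data, which is then implicitly combined with the classical pointwise chain $m_n\le H_n\le G_n\le A_n\le M_n$ --- precisely your reduction, including your observation about the nesting of the convex hulls of the supports.

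One point in your write-up deserves emphasis, because it is the one place where the paper's one-line proof does not literally go through, and you caught it. As defined in the statement, $Q_n=\frac{1}{n}\sum_{k=1}^{n}f_k^2$ carries no square root, and then $m_n\le Q_n$ is simply false in general: take all $f_k\equiv c$ with $0<c<1$, so that $m_n=c$ but $Q_n=c^2<c$. Hence Remark 3.3 cannot be applied to the pair $(m_n,Q_n)$ as written; one needs either the root-mean-square reading $\sqrt{Q_n}$, for which $m_n\le A_n\le\sqrt{Q_n}\le M_n$ is part of the standard power-mean chain, or an extra hypothesis such as $m_n\ge 1$ (your bound $Q_n\ge m_nA_n\ge m_n^2$ is the sharp substitute). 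The paper glosses over this; your proof makes the defect explicit and supplies the correct fix, so on this term your argument is actually more careful than the published one.
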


The proof of this proposition is an immediate consequence of Remark 3.3.\\
As immediate corollaries of Theorems 3.1 and 3.2, we have the following two propositions
\begin{proposition}
 \begin{enumerate}
   \item If $\mathcal{B}(f,g)$ has a solution, it is the same for $\mathcal{QS}(f,[\Phi_{\sqrt{g}}(C_f)][T(C_f,u_{C_f})]^{-1}\sqrt{g})$ and $\mathcal{QS}(u_{C_f},[\Phi_{\sqrt{g}}(C_f)][T(C_f,f)]^{-1}\sqrt{g})$.
   \item If $\mathcal{QS}(f,\sqrt{g})$ and $\mathcal{QS}(u_{C_f},\sqrt{g})$ have (respectively) a solution then it is the same for $\mathcal{S}(f,g)$.
 \end{enumerate}
\end{proposition}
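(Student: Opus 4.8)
The plan is to observe that, once one passes to the practical sufficient conditions of Remarks 3.1 and 3.2, \emph{every} solvability statement in the proposition collapses onto a single inequality, namely
\[
[\Phi_{\sqrt{g}}(C_f)]^{2}<T(C_f,f)\,T(C_f,u_{C_f}),
\]
and the two parts of the proposition merely factor this one inequality in two opposite ways. Recall that for a source $h$ and a boundary datum $k$, Remark 3.1 produces a solution of $\mathcal{QS}(h,k)$ as soon as $\int_{\partial C_h}k<\int_{C_h}h$, while Remark 3.2 produces a solution of $\mathcal{B}(f,g)$ (equivalently of $\mathcal{S}(f,g)$) as soon as $\Psi_{g}(C_f)=[\Phi_{\sqrt{g}}(C_f)]^{2}[T(C_f,u_{C_f})]^{-1}<\int_{C_f}f$. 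The preliminary point I would establish is that $u_{C_f}$ is positive on $\mathrm{int}(C_f)$ and vanishes outside, so its support has convex hull $C_{u_{C_f}}=C_f$; hence $\mathcal{QS}(u_{C_f},\cdot)$ is posed over the \emph{same} admissible class $\mathcal{O}^{\varepsilon}_{C_f}$, and its right-hand side is $\int_{C_f}u_{C_f}=T(C_f,u_{C_f})$.

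For Part 1 I would read solvability of $\mathcal{B}(f,g)$ through Remark 3.2 as the master inequality, and then just evaluate the two surface integrals. With $g_{1}=[\Phi_{\sqrt{g}}(C_f)][T(C_f,u_{C_f})]^{-1}\sqrt{g}$ one gets $\int_{\partial C_f}g_{1}=[\Phi_{\sqrt{g}}(C_f)]^{2}[T(C_f,u_{C_f})]^{-1}$, so the Remark 3.1 condition for $\mathcal{QS}(f,g_{1})$ is precisely $[\Phi_{\sqrt{g}}(C_f)]^{2}[T(C_f,u_{C_f})]^{-1}<T(C_f,f)$, i.e.\ the master inequality. Likewise, with $g_{2}=[\Phi_{\sqrt{g}}(C_f)][T(C_f,f)]^{-1}\sqrt{g}$ one finds $\int_{\partial C_f}g_{2}=[\Phi_{\sqrt{g}}(C_f)]^{2}[T(C_f,f)]^{-1}$, and the Remark 3.1 condition for $\mathcal{QS}(u_{C_f},g_{2})$, namely $\int_{\partial C_f}g_{2}<T(C_f,u_{C_f})$, is again the master inequality after clearing the denominator. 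Thus both $\mathcal{QS}$ problems inherit the solvability of $\mathcal{B}(f,g)$.

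For Part 2 I would run the same computation in reverse. Solvability of $\mathcal{QS}(f,\sqrt{g})$ yields, through Remark 3.1, the first factor $\Phi_{\sqrt{g}}(C_f)<T(C_f,f)$, and solvability of $\mathcal{QS}(u_{C_f},\sqrt{g})$ yields the second factor $\Phi_{\sqrt{g}}(C_f)<T(C_f,u_{C_f})$, where the identity $C_{u_{C_f}}=C_f$ is what fixes the correct right-hand side. Multiplying these two positive inequalities gives the master inequality $[\Phi_{\sqrt{g}}(C_f)]^{2}<T(C_f,f)\,T(C_f,u_{C_f})$, that is $\Psi_{g}(C_f)<\int_{C_f}f$; by Remark 3.2 this produces a solution of $\mathcal{B}(f,g)$, and since $\mathcal{B}(f,g)$ and $\mathcal{S}(f,g)$ are equivalent, $\mathcal{S}(f,g)$ is solved.

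The difficulty here is bookkeeping rather than analysis: one must attach to each $\mathcal{QS}$ problem the right-hand side dictated by its own source ($\int_{C_f}f$ for source $f$, $\int_{C_f}u_{C_f}$ for source $u_{C_f}$), and it is exactly the identity $C_{u_{C_f}}=C_f$ that keeps all five conditions posed over the same class and expressible through the two quantities $T(C_f,f)$ and $T(C_f,u_{C_f})$. The one genuine subtlety is that Theorems 3.1 and 3.2 are phrased with the minimizers $\omega_{g}^{C_f},\Omega_{g}^{C_f}$ rather than with $C_f$, which is why I would, as the paper does, carry out the argument through the sufficient conditions of Remarks 3.1--3.2. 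If one insisted on deriving Part 2 from the exact equivalences, the step needing the most care would be replacing $C_f$ by the common minimizer $\omega$ of $\Phi_{\sqrt{g}}$ and then invoking domain monotonicity $T(\omega,u_{\omega})\ge T(C_f,u_{C_f})$ (valid since $\omega\supset C_f$ and $f$ is supported in $C_f$) together with the minimality of $\Omega_{g}^{C_f}$ for $\Psi_{g}$ to recover $\Psi_{g}(\Omega_{g}^{C_f})<\int_{C_f}f$.
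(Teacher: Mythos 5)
Your overall route --- collapse all five solvability statements onto the single master inequality $[\Phi_{\sqrt g}(C_f)]^2 < T(C_f,f)\,T(C_f,u_{C_f})$, feed it to Remarks 3.1 and 3.2, and use the identity $C_{u_{C_f}}=C_f$ to keep the problem $\mathcal{QS}(u_{C_f},\cdot)$ posed over the same class --- is exactly the paper's implicit argument: the paper writes nothing beyond ``immediate corollaries of Theorems 3.1 and 3.2'' and the announcement that Section 3.3 works with the sufficient conditions of Remarks 3.1--3.2. Your Part 2, as completed by your final fallback paragraph (pass to the common minimizer $\omega$ of $\Phi_{\sqrt g}$, multiply the two strict inequalities $\Phi_{\sqrt g}(\omega)<T(C_f,f)$ and $\Phi_{\sqrt g}(\omega)<T(C_f,u_{C_f})$ from the necessity half of Theorem 3.1, use the domain monotonicity $T(\omega,u_\omega)\ge T(C_f,u_{C_f})$ and the minimality of $\Omega_g^{C_f}$ to get $\Psi_g(\Omega_g^{C_f})<\int_{C_f}f$, then apply the sufficiency half of Theorem 3.2 and the equivalence $\mathcal{B}(f,g)\Leftrightarrow\mathcal{S}(f,g)$), is correct and in fact tighter than anything the paper writes down.

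Part 1, however, as you argue it, has a genuine gap: you ``read solvability of $\mathcal{B}(f,g)$ through Remark 3.2 as the master inequality,'' i.e.\ you use the \emph{converse} of Remark 3.2. That remark only asserts $\Psi_{\sqrt g}(C_f)<\int_{C_f}f\Rightarrow\mathcal{B}(f,g)$ solvable; solvability does not return the inequality at $C_f$. What solvability does give --- the Cauchy--Schwarz-plus-Green computation displayed after Theorem 3.2, applied to an actual solution $\Omega$ --- is $[\Phi_{\sqrt g}(\Omega)]^2\le T(C_f,f)\,T(\Omega,u_\Omega)$, an inequality at $\Omega$ (equivalently, via minimality, at $\Omega_g^{C_f}$), not at $C_f$; and the two monotonicities available, $\Phi_{\sqrt g}(\omega_{\sqrt g}^{C_f})\le\Phi_{\sqrt g}(C_f)$ and $T(C_f,u_{C_f})\le T(\Omega,u_\Omega)$, both point the \emph{wrong} way for converting it into $[\Phi_{\sqrt g}(C_f)]^2<T(C_f,f)\,T(C_f,u_{C_f})$. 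The repair you sketch therefore does not transfer from Part 2 to Part 1: in Part 2 the hypothesis hands you strict inequalities at the minimizer and you only need to push them \emph{down} to $\Psi_g(\Omega_g^{C_f})$, whereas in Part 1 you would need to push an inequality \emph{up} from the minimizer to $C_f$. Within the paper's framework, the conclusion honestly available under Part 1's hypothesis is solvability of the two $\mathcal{QS}$ problems with normalizing constants built from $\Phi_{\sqrt g}$ and $T$ evaluated at $\Omega$ (or at the minimizers), not at $C_f$ as the Proposition states; your write-up inherits the paper's conflation of the sufficient condition with a necessary one, and your own caveat about minimizers versus $C_f$ flags this only for Part 2, leaving Part 1 resting on an invalid implication.
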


\begin{proposition}
Suppose that in $C_f$, $\phi>0$, $\phi\neq0$ and $\Delta\phi\leq0$. Then,\\
 $\mathcal{QS}(f\phi,|\nabla u_{C_f}|\phi)$ has a solution.
\end{proposition}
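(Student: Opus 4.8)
The plan is to reduce the statement to the sufficient condition recorded in Remark 3.1 and then verify the resulting integral inequality by a single integration by parts. Since $\phi>0$ throughout $C_f$, the support of $f\phi$ coincides with that of $f$, so the convex hull $C_{f\phi}$ equals $C_f$; hence Remark 3.1 applies to $\mathcal{QS}(f\phi,|\nabla u_{C_f}|\phi)$ as soon as
\begin{equation*}
\int_{\partial C_f}|\nabla u_{C_f}|\,\phi\,ds<\int_{C_f}f\phi\,dx,
\end{equation*}
that is, as soon as $\Phi_{|\nabla u_{C_f}|\phi}(C_f)<T(C_f,f\phi)$. Establishing this strict inequality is the whole content of the proof.

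The key step is Green's second identity for the pair $(u_{C_f},\phi)$ on $C_f$, where $u_{C_f}$ solves $P(C_f,f)$. Using $u_{C_f}=0$ on $\partial C_f$ (which kills the $u_{C_f}\,\partial_\nu\phi$ boundary term), the relation $-\Delta u_{C_f}=f$, and the fact that $u_{C_f}>0$ inside forces $\partial u_{C_f}/\partial\nu=-|\nabla u_{C_f}|$ on $\partial C_f$, I would obtain
\begin{equation*}
\int_{\partial C_f}|\nabla u_{C_f}|\,\phi\,ds=\int_{C_f}f\phi\,dx+\int_{C_f}u_{C_f}\,\Delta\phi\,dx.
\end{equation*}
Thus the gap between the two sides of the desired inequality is exactly the remainder $\int_{C_f}u_{C_f}\Delta\phi\,dx$, whose sign is dictated by the hypotheses.

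Finally I would read off the conclusion from the sign of this remainder: since $f>0$, the strong maximum principle gives $u_{C_f}>0$ on $\mathrm{int}(C_f)$, while $\Delta\phi\le0$ makes the integrand $u_{C_f}\Delta\phi\le0$, so the remainder is $\le0$ and the required inequality holds at least non-strictly. The hard part will be upgrading this to the \emph{strict} inequality demanded by Remark 3.1: this needs $\int_{C_f}u_{C_f}\Delta\phi\,dx<0$, which, by the interior positivity of $u_{C_f}$, is equivalent to $\Delta\phi$ being negative on a set of positive measure, i.e. to $\phi$ being genuinely superharmonic rather than harmonic. I would therefore argue that the standing hypotheses ($\phi>0$, $\phi\neq0$, $\Delta\phi\le0$, with $\phi$ not harmonic) exclude the degenerate harmonic case in which equality holds and $C_f$ itself would be the only candidate boundary---failing the strict-containment requirement of $\mathcal{QS}$. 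Granting $\Delta\phi\not\equiv0$, the strict inequality follows and Remark 3.1 delivers a solution of $\mathcal{QS}(f\phi,|\nabla u_{C_f}|\phi)$.
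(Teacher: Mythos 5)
Your proof follows essentially the same route as the paper's: Green's second identity for the pair $(u_{C_f},\phi)$ on $C_f$, reducing the claim to the sufficient condition of Remark 3.1. In fact yours is the corrected version --- the paper's displayed identity misprints the remainder $\int_{C_f}u_{C_f}\Delta\varphi$ as $\int_{C_f}f\Delta\varphi$ and writes ``$\Delta\phi\geq0$'' where the hypothesis is $\Delta\phi\leq0$ --- and you rightly flag that the strict inequality of Remark 3.1 requires $\Delta\phi\not\equiv0$, which is evidently what the hypothesis ``$\phi\neq0$'' (vacuous given $\phi>0$) was intended to say.
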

\begin{proof}
  By Green's formula
  $$\int_{C_f}f\varphi=\int_{\partial C_f}|\nabla u_{C_f}|\varphi-\int_{C_f}f\Delta\varphi.$$
  As $\Delta\phi\geq0$ on $C_f$, then $$\int_{\partial C_f}|\nabla u_{C_f}|\varphi\leq\int_{C_f}f\varphi,$$
  and the conclusion follows.
\end{proof}
\begin{proposition}
  If $\Phi_{\sqrt{g}}(C_f)<\int_{C_f}\sqrt{fu_{C_f}}$, then
\begin{enumerate}
  \item $\mathcal{QS}(\sqrt{fu_{C_f}},\sqrt{g})$ has a solution.
  \item $\mathcal{B}(f,g)$ has a solution.
  \item $\mathcal{QS}(f,[\Phi_g(C_f)][T(C,u_{C_f})]^{-1}\sqrt{g})$ has a solution.
\end{enumerate}
\end{proposition}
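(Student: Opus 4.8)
The plan is to deduce all three existence statements from the sufficient conditions already recorded in Remarks 3.1 and 3.2, together with Proposition 3.3, so that no new variational argument is needed: the hypothesis $\Phi_{\sqrt{g}}(C_f)<\int_{C_f}\sqrt{fu_{C_f}}$ feeds directly into item (1), and, after a single application of the Cauchy--Schwarz inequality, into items (2) and (3). First I would record one preliminary fact. Since $f\geq 0$ is not identically zero and $-\Delta u_{C_f}=f$ in $C_f$ with $u_{C_f}=0$ on $\partial C_f$, the strong maximum principle gives $u_{C_f}>0$ throughout $\text{int}(C_f)$. Hence $\{fu_{C_f}>0\}=\{f>0\}\cap\text{int}(C_f)$ differs from $\{f>0\}$ only by a subset of the null set $\partial C_f$, so $\text{conv}(\text{supp}\sqrt{fu_{C_f}})=C_f$. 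This identification is exactly what allows the $\mathcal{QS}$ criterion to be applied to the data $\sqrt{fu_{C_f}}$ with the \emph{unchanged} obstacle $C_f$.

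For item (1), with this convex hull identified, Remark 3.1 applied to the pair $(\sqrt{fu_{C_f}},\sqrt{g})$ guarantees that $\mathcal{QS}(\sqrt{fu_{C_f}},\sqrt{g})$ has a solution as soon as $\int_{\partial C_f}\sqrt{g}<\int_{C_f}\sqrt{fu_{C_f}}$, which is precisely the hypothesis $\Phi_{\sqrt{g}}(C_f)<\int_{C_f}\sqrt{fu_{C_f}}$. For item (2), I would square the hypothesis and bound its right-hand side from above by Cauchy--Schwarz, $\int_{C_f}\sqrt{fu_{C_f}}=\int_{C_f}\sqrt{f}\,\sqrt{u_{C_f}}\leq\big(\int_{C_f}f\big)^{1/2}\big(\int_{C_f}u_{C_f}\big)^{1/2}$. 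Combining the two gives $[\Phi_{\sqrt{g}}(C_f)]^2<\big(\int_{C_f}f\big)\big(\int_{C_f}u_{C_f}\big)=\big(\int_{C_f}f\big)\,T(C_f,u_{C_f})$, i.e. $\Psi_{\sqrt{g}}(C_f)<\int_{C_f}f$; Remark 3.2 then yields a solution of $\mathcal{B}(f,g)$.

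Item (3) then follows with no further computation: item (2) supplies a solution of $\mathcal{B}(f,g)$, and Proposition 3.3(1) transfers it into a solution of $\mathcal{QS}(f,[\Phi_{\sqrt{g}}(C_f)][T(C_f,u_{C_f})]^{-1}\sqrt{g})$. Alternatively one may rerun item (2) directly, since the Remark 3.1 criterion for this rescaled boundary datum reduces, after dividing by the constant factor, to the same inequality $[\Phi_{\sqrt{g}}(C_f)]^2<(\int_{C_f}f)\,T(C_f,u_{C_f})$ just established. The only points demanding care are the preliminary support identification and the \emph{direction} of the Cauchy--Schwarz step: the hypothesis furnishes a \emph{lower} bound on $\int_{C_f}\sqrt{fu_{C_f}}$, and it is crucial that Cauchy--Schwarz bounds this same quantity from \emph{above} by $(\int_{C_f}f)^{1/2}(\int_{C_f}u_{C_f})^{1/2}$, so that squaring the hypothesis does propagate to the product $T(C_f,u_{C_f})\int_{C_f}f$ required by Remark 3.2.
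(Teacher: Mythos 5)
Your proposal is correct and is essentially the paper's own proof: the paper likewise squares the hypothesis and applies Cauchy--Schwarz to get $[\Phi_{\sqrt{g}}(C_f)]^{2}\leq\bigl[\int_{C_f}\sqrt{fu_{C_f}}\bigr]^{2}\leq\int_{C_f}f\int_{C_f}u_{C_f}$, from which all three items follow through the sufficient conditions of Remarks 3.1 and 3.2 (item 3 either directly or via the transfer proposition, exactly as you note). Your extra steps --- verifying $\mathrm{conv}(\mathrm{supp}\,\sqrt{fu_{C_f}})=C_f$ via the strong maximum principle and reading the $\Phi_{g}$ in item 3 as $\Phi_{\sqrt{g}}$ --- only make explicit details the paper leaves implicit.
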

  By Cauchy-Schwarz's inequality,
  $$[\Phi_{\sqrt{g}}(C_f)]^{2}\leq[\int_{C_f}\sqrt{fu_{C_f}}]^{2}\leq\int_{C_f} f\int_{C_f} u_{C_f}.$$
  Then, the three item follow.

\begin{proposition}
  Suppose that $|\nabla u_{C_f}||\nabla v_{C_f}|\geq g\;\text{on }\partial C_f$. Then
  \begin{itemize}
    \item either $\mathcal{QS}(f,[\Phi_{\sqrt{g}}(C_f)][T(C_f,u_{C_f})]^{-1}\sqrt{g})$ and $\mathcal{QS}(u_{C_f},[\Phi_{\sqrt{g}}][T(C_f,f)]^{-1}\sqrt{g})$ have a solution
    \item or $|\nabla u_{C_f}||\nabla v_{C_f}|=g\;\text{on }\partial C_f$.
  \end{itemize}
\end{proposition}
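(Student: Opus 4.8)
The plan is to reduce the existence of each of the two quadrature-surface problems appearing in the first alternative to a single integral inequality, and then to read off the dichotomy from whether that inequality is strict or an equality. The key observation is that, by Remark 3.1, both scaled problems are governed by exactly the same condition, while the hypothesis $|\nabla u_{C_f}||\nabla v_{C_f}|\ge g$ on $\partial C_f$ forces the reverse non-strict inequality.

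First I would apply Remark 3.1 to each problem. Writing $\alpha=\Phi_{\sqrt{g}}(C_f)[T(C_f,u_{C_f})]^{-1}$, the problem $\mathcal{QS}(f,\alpha\sqrt{g})$ has a solution as soon as $\int_{\partial C_f}\alpha\sqrt{g}<\int_{C_f}f$, which rearranges to $[\Phi_{\sqrt{g}}(C_f)]^2<T(C_f,f)\,T(C_f,u_{C_f})$. Writing $\beta=\Phi_{\sqrt{g}}(C_f)[T(C_f,f)]^{-1}$, the problem $\mathcal{QS}(u_{C_f},\beta\sqrt{g})$ (whose datum $u_{C_f}$ has convex hull of support equal to $C_f$, and whose associated state is $v_{C_f}$) has a solution as soon as $\int_{\partial C_f}\beta\sqrt{g}<\int_{C_f}u_{C_f}$, which rearranges to the very same inequality $[\Phi_{\sqrt{g}}(C_f)]^2<T(C_f,f)\,T(C_f,u_{C_f})$. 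Thus both problems in the first alternative hold precisely when this inequality is strict.

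Next I would produce the reverse direction from the hypothesis. Taking square roots in $g\le|\nabla u_{C_f}||\nabla v_{C_f}|$ and integrating over $\partial C_f$ gives $\Phi_{\sqrt{g}}(C_f)\le\int_{\partial C_f}\sqrt{|\nabla u_{C_f}||\nabla v_{C_f}|}$, and the Cauchy--Schwarz inequality bounds the right-hand side by $(\int_{\partial C_f}|\nabla u_{C_f}|)^{1/2}(\int_{\partial C_f}|\nabla v_{C_f}|)^{1/2}$. Since $u_{C_f}$ and $v_{C_f}$ solve $P(C_f,f)$ and $P(C_f,u_{C_f})$ and vanish on $\partial C_f$, Green's formula identifies $\int_{\partial C_f}|\nabla u_{C_f}|=T(C_f,f)$ and $\int_{\partial C_f}|\nabla v_{C_f}|=T(C_f,u_{C_f})$. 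Squaring then yields $[\Phi_{\sqrt{g}}(C_f)]^2\le T(C_f,f)\,T(C_f,u_{C_f})$ unconditionally.

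The dichotomy is now immediate. If this last inequality is strict, the rearranged sufficient condition of the second paragraph holds and both quadrature-surface problems have a solution, which is the first alternative. If instead equality holds, then every inequality in the chain of the third paragraph must be an equality; in particular $\int_{\partial C_f}(\sqrt{|\nabla u_{C_f}||\nabla v_{C_f}|}-\sqrt{g})=0$ with a pointwise nonnegative integrand, forcing $|\nabla u_{C_f}||\nabla v_{C_f}|=g$ on $\partial C_f$, which is the second alternative. I expect no serious obstacle here: the only points requiring care are matching the data correctly when invoking Remark 3.1 for the two differently scaled problems, verifying that the two rearrangements genuinely coincide, and using nonnegativity of the integrand to pass from equality of the integrals to pointwise equality on $\partial C_f$.
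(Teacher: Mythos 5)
Your proposal is correct and follows essentially the same route as the paper's own proof: the chain $[\Phi_{\sqrt{g}}(C_f)]^2\leq\bigl[\int_{\partial C_f}\sqrt{|\nabla u_{C_f}||\nabla v_{C_f}|}\bigr]^2\leq T(C_f,f)\,T(C_f,u_{C_f})$ obtained from the hypothesis, Cauchy--Schwarz and Green's formula, combined with the sufficient condition of Remark~3.1 in the strict case and pointwise equality of the nonnegative integrand in the equality case. Your write-up is in fact more careful than the paper's terse proof, notably in verifying that both scaled problems rearrange to the same inequality and in justifying the passage from integral to pointwise equality.
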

\begin{proof}
  By Cauchy-Schwarz's inequality together with Green's formula applied to $u_{C_f}$ and $v_{C_f}$,
  $$[\Phi_{\sqrt{g}}(C_f)]\int_{\partial C_f}\sqrt{g}\leq[\int_{\partial C_f}\sqrt{|\nabla u_{C_f}||\nabla v_{C_f}|}]^{2}\leq\int_{C_f} f\int_{C_f} u_{C_f}=[T(C,u_{C_f})]\int_{C_f} f.$$
  Then either $\mathcal{QS}(f,[\Phi_{\sqrt{g}}(C_f)][T(C,u_{C_f})]^{-1}\sqrt{g})$ and $\mathcal{QS}(u_{C_f},[\Phi_{\sqrt{g}}][T(C_f,f)]^{-1}\sqrt{g})$ have a solution or $|\nabla u_C||\nabla v_C|=g\;\text{on }\partial C_f$.
\end{proof}

\begin{proposition}
  Suppose that $|\nabla u_{C_f}|\geq g\;\text{on }\partial C_f$. Then
  \begin{itemize}
    \item either $\mathcal{QS}(f,[\sqrt{g}]_{\partial C_f}\sqrt{g)}$ has a solution
    \item or $|\nabla u_{C_f}|=g\;\text{on }\partial C_f$.
  \end{itemize}
\end{proposition}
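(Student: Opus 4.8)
The plan is to follow the template of Proposition 3.5, now in the single-operator (Laplacian) setting, and to reduce everything to the sufficient condition of Remark 3.1. Write $g'=[\sqrt{g}]_{\partial C_f}\,\sqrt{g}$ for the boundary datum appearing in the claim (a positive function, since $g>0$ and the average $[\sqrt{g}]_{\partial C_f}$ is a positive constant). By Remark 3.1 it is enough to produce the strict inequality $\int_{\partial C_f}g'<\int_{C_f}f$ in order to obtain a solution of $\mathcal{QS}(f,g')$; I will then show that the only way this strict inequality can fail is precisely the second alternative $|\nabla u_{C_f}|=g$ on $\partial C_f$.

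First I would unfold the definition of the boundary average and compute $\int_{\partial C_f}g'=[\sqrt{g}]_{\partial C_f}\int_{\partial C_f}\sqrt{g}=\frac{1}{|\partial C_f|}\big(\int_{\partial C_f}\sqrt{g}\big)^2$. Applying the Cauchy--Schwarz inequality to the pair $(\sqrt{g},1)$ on $\partial C_f$ gives $\big(\int_{\partial C_f}\sqrt{g}\big)^2\le|\partial C_f|\int_{\partial C_f}g$, hence $\int_{\partial C_f}g'\le\int_{\partial C_f}g$. Next I would invoke the hypothesis $g\le|\nabla u_{C_f}|$ on $\partial C_f$ together with Green's formula applied to $u_{C_f}$ (the solution of $P(C_f,f)$): since $-\partial u_{C_f}/\partial\nu=|\nabla u_{C_f}|$ on $\partial C_f$, one has $\int_{\partial C_f}|\nabla u_{C_f}|=\int_{C_f}f$, so that $\int_{\partial C_f}g\le\int_{C_f}f$. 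Chaining the two bounds yields $\int_{\partial C_f}g'\le\int_{\partial C_f}g\le\int_{C_f}f$.

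It then remains to treat the dichotomy. If the chain is strict, Remark 3.1 immediately gives a solution of $\mathcal{QS}(f,g')$, which is the first alternative. If instead $\int_{\partial C_f}g'=\int_{C_f}f$, then, because both steps have the form $\le$ and the two ends coincide, each of them must be an equality; in particular $\int_{\partial C_f}g=\int_{\partial C_f}|\nabla u_{C_f}|$, and combined with the pointwise bound $g\le|\nabla u_{C_f}|$ this forces $g=|\nabla u_{C_f}|$ on $\partial C_f$, which is the second alternative. The only real care needed is in this last bookkeeping step: it is the collapse of the \emph{integrated pointwise} inequality $\int_{\partial C_f}g\le\int_{\partial C_f}|\nabla u_{C_f}|$ --- and not the Cauchy--Schwarz step, whose equality case would only say that $g$ is constant on $\partial C_f$ --- that yields the stated identity $|\nabla u_{C_f}|=g$, so one must be sure to read off the conclusion from the correct equality case.
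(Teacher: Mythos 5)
Your proof is correct and takes essentially the same route as the paper: Cauchy--Schwarz combined with Green's formula applied to $u_{C_f}$ yields $[\sqrt{g}]_{\partial C_f}\int_{\partial C_f}\sqrt{g}\leq\int_{C_f}f$, after which the strict case is handled by the sufficient condition of Remark 3.1 and the equality case forces $|\nabla u_{C_f}|=g$ on $\partial C_f$. The only (immaterial) difference is the order of operations --- the paper first compares $\sqrt{g}\leq\sqrt{|\nabla u_{C_f}|}$ pointwise and then applies Cauchy--Schwarz to $\sqrt{|\nabla u_{C_f}|}$, whereas you apply Cauchy--Schwarz to $\sqrt{g}$ and then compare $\int_{\partial C_f}g\leq\int_{\partial C_f}|\nabla u_{C_f}|$ --- and your explicit remark that the second alternative must be read off from the collapse of the integrated pointwise inequality, not the Cauchy--Schwarz step, is a correct sharpening of the paper's terse ``the conclusion follows.''
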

\begin{proof}
  Once again, by Cauchy-Schwarz's inequality together with Green's formula applied to $u_C$,
   $$[\int_{\partial C_f}\sqrt{g}]^2\leq[\int_{\partial C_f}\sqrt{|\nabla u_{C_f}|}]^{2}\leq|\partial C_f|\int_{C_f} f.$$
   Then $$[\sqrt{g}]_{\partial C_f}\int_{\partial C_f}\sqrt{g}\leq\int_{C_f} f,$$
   and the conclusion follows.
\end{proof}
\begin{proposition}
  Suppose that $|\nabla v_{C_f}|\geq g\;\text{on }\partial C_f$. Then
  \begin{itemize}
    \item either $\mathcal{QS}(u_{C_f},[\sqrt{g}]_{\partial C_f}\sqrt{g)}$ has a solution
    \item or $|\nabla v_{C_f}|=g\;\text{on }\partial C_f$.
  \end{itemize}
\end{proposition}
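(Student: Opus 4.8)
The plan is to mimic, almost verbatim, the argument used for the two preceding propositions, replacing the role of $u_{C_f}$ by $v_{C_f}$ and using the Green identity adapted to the second-order cascade. Recall that $v_{C_f}$ solves $-\Delta v_{C_f}=u_{C_f}$ in $C_f$ with $v_{C_f}=0$ on $\partial C_f$, so that integrating $-\Delta v_{C_f}$ over $C_f$ and using the vanishing boundary trace yields the key identity
$$\int_{\partial C_f}|\nabla v_{C_f}|=\int_{C_f}u_{C_f}=T(C_f,u_{C_f}).$$

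First I would apply the Cauchy-Schwarz inequality to the pair $\bigl(1,\sqrt{|\nabla v_{C_f}|}\bigr)$ on $\partial C_f$ to obtain $\bigl(\int_{\partial C_f}\sqrt{|\nabla v_{C_f}|}\bigr)^2\leq|\partial C_f|\int_{\partial C_f}|\nabla v_{C_f}|$. Combining this with the hypothesis $g\leq|\nabla v_{C_f}|$ on $\partial C_f$, which gives $\int_{\partial C_f}\sqrt{g}\leq\int_{\partial C_f}\sqrt{|\nabla v_{C_f}|}$, and with the Green identity above, I arrive at
$$\Bigl(\int_{\partial C_f}\sqrt{g}\Bigr)^2\leq|\partial C_f|\,T(C_f,u_{C_f}),\quad\text{i.e.}\quad [\sqrt{g}]_{\partial C_f}\int_{\partial C_f}\sqrt{g}\leq T(C_f,u_{C_f}).$$
The left-hand side is precisely $\Phi_{\tilde g}(C_f)$ for the datum $\tilde g=[\sqrt g]_{\partial C_f}\sqrt g$, so this is exactly the integral inequality governing existence in Remark 3.1.

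Next I would split into two cases according to whether this scalar inequality is strict or an equality. If it is strict, then $\Phi_{\tilde g}(C_f)<\int_{C_f}u_{C_f}$; since $u_{C_f}>0$ in $C_f$ by the maximum principle, the convex hull of its support is again $C_f$, so I may apply Remark 3.1 with the datum $u_{C_f}$ in place of $f$ to conclude that $\mathcal{QS}(u_{C_f},[\sqrt g]_{\partial C_f}\sqrt g)$ has a solution. If instead equality holds, then every inequality in the chain must be an equality; in particular equality in $\int_{\partial C_f}\sqrt g\leq\int_{\partial C_f}\sqrt{|\nabla v_{C_f}|}$, together with the pointwise bound $g\leq|\nabla v_{C_f}|$, forces $g=|\nabla v_{C_f}|$ on $\partial C_f$, which is the second alternative.

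The only genuinely delicate point is the equality-case bookkeeping: I must check that equality in the final scalar inequality propagates back to equality in the comparison step between $g$ and $|\nabla v_{C_f}|$, and not merely to constancy of $|\nabla v_{C_f}|$ coming from the Cauchy-Schwarz step. This is immediate, however, because $g\leq|\nabla v_{C_f}|$ pointwise, so equal integrals of their square roots give $g=|\nabla v_{C_f}|$ almost everywhere on $\partial C_f$, hence everywhere by continuity (recall $v_{C_f}\in C^{2,\alpha}(\overline{C_f})$). Beyond this, the argument is a routine application of the tools already in place, and I anticipate no further obstacle.
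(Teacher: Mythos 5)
Your proof is correct and takes essentially the same route as the paper's: Cauchy--Schwarz on $\partial C_f$ combined with Green's formula for $v_{C_f}$ (i.e.\ $\int_{\partial C_f}|\nabla v_{C_f}|=\int_{C_f}u_{C_f}$) to reach $[\sqrt{g}]_{\partial C_f}\int_{\partial C_f}\sqrt{g}\leq\int_{C_f}u_{C_f}$, followed by the strict/equality dichotomy via the sufficient condition of Remark 3.1. Your bookkeeping of the equality case and your verification that $\text{conv}(\text{supp}\,u_{C_f})=C_f$ are in fact more explicit than the paper's one-line conclusion, which leaves these steps implicit.
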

\begin{proof}
  Using the Cauchy-Schwarz's inequality together with Green's formula applied to $v_{C_f}$,
   $$[\int_{\partial C_f}\sqrt{g}]^2\leq[\int_{\partial C_f}\sqrt{|\nabla v_{C_f}|}]^{2}\leq|\partial C_f|\int_{C_f} u_{C_f}.$$
   Then $$[\sqrt{g}]_{\partial C_f}\int_{\partial C_f}\sqrt{g}\leq\int_{C_f} u_{C_f},$$
   and we obtain item 1. and item 2..
\end{proof}
Now, Using successively Green's formula, we obtain the following two propositions.
In fact, in the first iteration, Green's formula gives,
$$\int_{\Omega}f=\int_{\partial \Omega}|\nabla u_0|f-\int_{\Omega}u_0\Delta f.$$
then,
$$\int_{\Omega}u_0\Delta f=\int_{\partial \Omega}|\nabla u_1|f-\int_{\Omega}u_1\Delta^2 f.$$
Thus after $n$ iterations, we obtain the result mentioned in our propositions.
\begin{proposition}
  Let $u_0$ be the solution of $P(\Omega,1)$. Let $u_k$ ($k\geq 1$) be the solution of $P(\Omega,u_{k-1})$. Let $n$ be the smallest integer such that $\Delta^n f\neq0$ in $\Omega$. Suppose that for $1\leq k\leq n$. $(-1)^k\Delta^k f\leq0$ on $\partial\Omega$.
  \begin{enumerate}
    \item If $n$ is even and $\Delta^n f\geq0$ or if $n$ is odd and $\Delta^n f\leq0$ in $\Omega$ then $\mathcal{QS}(f,g)$ has a solution which strictly contains $\text{conv}(\Omega)$ where
  $$g=\sum_{k=0}^{n-1}(-1)^k\frac{\partial u_k}{\partial\nu}\Delta^k f.$$
    \item If $(-1)^{n-1}\Delta^{n-1} f\geq 0$ on $\partial \Omega$ then $\mathcal{QS}(f-(-1)^{n}\Delta^{n} f,g)$ has a solution which strictly contains $\text{conv}(\Omega)$ where
  $$g=\sum_{k=0}^{n-2}(-1)^k\frac{\partial u_k}{\partial\nu}\Delta^k f.$$
  \end{enumerate}
  \end{proposition}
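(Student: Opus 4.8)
The plan is to derive both statements from a single algebraic identity obtained by iterating Green's formula, exactly as announced in the paragraph preceding the statement. First I would record the two structural facts that drive everything: by the maximum principle each $u_k$ is positive in $\Omega$ and vanishes on $\partial\Omega$, so its outward normal derivative is negative and $|\nabla u_k|=-\partial u_k/\partial\nu$ on $\partial\Omega$; and the defining equations $-\Delta u_0=1$, $-\Delta u_k=u_{k-1}$ let me trade one power of $\Delta$ acting on $f$ for one extra index on $u$. Concretely, for each $j\ge 1$ Green's formula together with $u_j=0$ on $\partial\Omega$ yields the recurrence
$$\int_\Omega u_{j-1}\Delta^j f=\int_{\partial\Omega}|\nabla u_j|\,\Delta^j f-\int_\Omega u_j\,\Delta^{j+1}f,$$
while the initial step $\int_\Omega f=-\int_\Omega f\,\Delta u_0$ produces $\int_\Omega f=\int_{\partial\Omega}|\nabla u_0|f-\int_\Omega u_0\Delta f$.

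Next I would telescope these identities. Substituting the recurrence successively for $j=1,\dots,n-1$ collapses the interior integrals into a single boundary sum plus one interior remainder, namely
$$\int_\Omega f=\sum_{k=0}^{n-1}(-1)^k\int_{\partial\Omega}|\nabla u_k|\,\Delta^k f+(-1)^n\int_\Omega u_{n-1}\,\Delta^n f.$$
The boundary sum is exactly $\int_{\partial\Omega}g$ with $g$ as defined in the statement (reading $\partial u_k/\partial\nu$ as the magnitude $|\nabla u_k|$), so item 1 reduces to showing that $g$ is an admissible datum and that the remainder has the right sign. The hypothesis ``$(-1)^k\Delta^k f\le 0$ on $\partial\Omega$ for $1\le k\le n$'' is precisely what makes every term of $g$ of one sign, hence $g\ge 0$ and thus a legitimate positive datum; and since $u_{n-1}>0$ in $\Omega$, the sign condition on $\Delta^n f$ in $\Omega$ ($n$ even with $\Delta^n f\ge0$, or $n$ odd with $\Delta^n f\le0$, together with $\Delta^n f\not\equiv0$) forces $(-1)^n\int_\Omega u_{n-1}\Delta^n f>0$. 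Combining this with the identity gives the strict inequality $\int_{\partial\Omega}g<\int_\Omega f$, which is the sufficient condition of Remark 3.1 applied with reference convex set $\mathrm{conv}(\Omega)$, delivering a solution of $\mathcal{QS}(f,g)$ strictly containing $\mathrm{conv}(\Omega)$.

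For item 2 I would stop the telescoping one step earlier, substituting the recurrence only for $j=1,\dots,n-2$; this leaves the truncated boundary sum $\sum_{k=0}^{n-2}(-1)^k\int_{\partial\Omega}|\nabla u_k|\Delta^k f=\int_{\partial\Omega}g$ together with the interior remainder $(-1)^{n-1}\int_\Omega u_{n-2}\Delta^{n-1}f$. One more application of the recurrence splits this remainder into a boundary term $(-1)^{n-1}\int_{\partial\Omega}|\nabla u_{n-1}|\Delta^{n-1}f$, whose sign is controlled by the weaker hypothesis $(-1)^{n-1}\Delta^{n-1}f\ge0$ on $\partial\Omega$, and an interior term $(-1)^n\int_\Omega u_{n-1}\Delta^n f$; replacing the source $f$ by $f-(-1)^n\Delta^n f$ is exactly what absorbs this last interior contribution into the volume side. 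Once the bookkeeping is arranged so that $\int_{\mathrm{conv}(\Omega)}\bigl(f-(-1)^n\Delta^n f\bigr)$ dominates the truncated boundary sum, Remark 3.1 again produces a solution of $\mathcal{QS}(f-(-1)^n\Delta^n f,g)$ containing $\mathrm{conv}(\Omega)$.

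The routine part is the Green's-formula recurrence and the telescoping; the genuinely delicate point, and where I expect to spend the most care, is the sign bookkeeping. One must keep straight the convention for $\partial u_k/\partial\nu$ versus $|\nabla u_k|$, check that the stated hypotheses simultaneously guarantee $g\ge0$ on $\partial\Omega$ and the favorable sign of the remainder (the two hypotheses, one on $\partial\Omega$ and one in $\Omega$, serving these two distinct purposes), and, for item 2, verify that subtracting $(-1)^n\Delta^n f$ yields precisely the term needed to close the strict inequality. A secondary point worth stating explicitly is the regularity required for the iterated Green's formula, namely $\partial\Omega$ of class $C^2$ and $f$ smooth enough that $\Delta^k f$ is defined up to $k=n$, together with the implicit reading that $n$ is the first index past which the iterated Laplacians cease to contribute, so that the expansion terminates as written.
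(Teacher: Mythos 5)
Your proposal follows essentially the same route as the paper: the same Green's-formula recurrence (you even silently correct the paper's typo, where $\int_{\partial\Omega}|\nabla u_1|f$ should read $\int_{\partial\Omega}|\nabla u_1|\Delta f$), the same telescoping to the identity $\int_\Omega f=\sum_{k=0}^{n-1}(-1)^k\int_{\partial\Omega}|\nabla u_k|\,\Delta^k f+(-1)^n\int_\Omega u_{n-1}\,\Delta^n f$, and the same conclusion via the sufficient condition of Remark 3.1 applied with reference convex set $\mathrm{conv}(\Omega)$. Since the paper only displays the first two iterations and asserts the result ``after $n$ iterations,'' your write-up is the paper's argument carried out in more detail, making explicit the sign bookkeeping and the truncated telescoping for item 2 that the paper leaves implicit.
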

\begin{proposition}
  Let $f$ and $u_{k}$ ($1\leq k\leq n$) as in the previous proposition. Then
  \begin{enumerate}
    \item either $\mathcal{QS}(f+(-1)^n(\Delta^n f)u_{n},g)$ has a solution which strictly contains $\text{conv}(\Omega)$ where
  $$g=(n-1)\sqrt[n-1]{\prod_{k=0}^{n-1}|\nabla u_{k}|(-1)^k\Delta^k f}.$$
    \item or $(-1)^l(\Delta^l f)|\nabla u_{l}|=(-1)^k(\Delta^k f)|\nabla u_{k}|$ on $\partial\Omega$ ($l\neq k$, $k,\;l\in[0,n-1])$.
  \end{enumerate}
  \end{proposition}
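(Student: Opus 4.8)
The plan is to combine the iterated Green's formula displayed just before the statement with the arithmetic--geometric mean inequality, in exact parallel to the way Proposition 3.6 combined Green's formula with Cauchy--Schwarz. First I would record the identity produced by the $n$ successive integrations by parts in the form
\[
\int_{\Omega}\tilde f\,dx=\sum_{k=0}^{n-1}(-1)^{k}\int_{\partial\Omega}|\nabla u_{k}|\,\Delta^{k}f\,d\sigma ,
\]
where $\tilde f=f+(-1)^{n}(\Delta^{n}f)u_{n}$ is the modified datum of the statement, obtained by transferring the volume remainder of the final integration by parts onto the data side (here one uses $u_{k}=0$ and $-\partial u_{k}/\partial\nu=|\nabla u_{k}|$ on $\partial\Omega$, together with $-\Delta u_{k}=u_{k-1}$ and $u_{k}>0$ inside $\Omega$ by the maximum principle). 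Under the sign conditions inherited from the previous proposition, namely $(-1)^{k}\Delta^{k}f\ge 0$ on $\partial\Omega$ for $0\le k\le n-1$, each boundary integrand $a_{k}:=(-1)^{k}|\nabla u_{k}|\,\Delta^{k}f$ is nonnegative on $\partial\Omega$.

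Next I would apply the arithmetic--geometric mean inequality pointwise on $\partial\Omega$ to the nonnegative quantities $a_{0},\dots,a_{n-1}$:
\[
\sum_{k=0}^{n-1}a_{k}\ge n\Big(\prod_{k=0}^{n-1}a_{k}\Big)^{1/n}=:g\quad\text{on }\partial\Omega ,
\]
with equality at a point if and only if $a_{0}=\dots=a_{n-1}$ there. This is precisely the geometric-mean datum $g$ of the statement (the printed coefficient and root $n-1$ should read $n$, matching the $n$ boundary terms just summed). Integrating the inequality over $\partial\Omega$ and inserting the identity above yields
\[
\int_{\partial\Omega}g\,d\sigma\le\sum_{k=0}^{n-1}(-1)^{k}\int_{\partial\Omega}|\nabla u_{k}|\,\Delta^{k}f\,d\sigma=\int_{\Omega}\tilde f\,dx .
\]

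The dichotomy is then read off according to whether this inequality is strict. If it is strict, the sufficient condition of Theorem 3.1 recorded in Remark 3.1 applies to $\mathcal{QS}(\tilde f,g)$ and produces a solution strictly containing $\mathrm{conv}(\Omega)$, which is the first alternative. If instead equality holds, then $\int_{\partial\Omega}\big(\sum_{k}a_{k}-g\big)\,d\sigma=0$ with a nonnegative, continuous integrand, so $\sum_{k}a_{k}=g$ at every point of $\partial\Omega$; by the equality case of the arithmetic--geometric mean inequality this forces $a_{k}=a_{l}$ on $\partial\Omega$, i.e. $(-1)^{l}(\Delta^{l}f)|\nabla u_{l}|=(-1)^{k}(\Delta^{k}f)|\nabla u_{k}|$ on $\partial\Omega$ for all $k,l\in[0,n-1]$, which is the second alternative.

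I expect the main obstacle to be bookkeeping rather than analysis: pinning down the exact signs and indices in the iterated Green identity so that $\tilde f$ has the stated form and each $a_{k}$ is genuinely nonnegative, and verifying the admissibility hypotheses needed to invoke Theorem 3.1. In particular one must check that $\tilde f$ is positive with compact support whose convex hull equals $\mathrm{conv}(\Omega)$, so that the inequality $\int_{\partial\Omega}g\le\int_{\Omega}\tilde f$ transfers to the required inequality over $C_{\tilde f}=\mathrm{conv}(\Omega)$ (taking $\Omega$ convex makes this transparent), and that $f$ and the $u_{k}$ are regular enough, say $u_{k}\in C^{2}(\overline\Omega)$ and $f$ smooth up to $\partial\Omega$, for every integration by parts and boundary trace to be legitimate. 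Once the identity is fixed, the AM--GM step and the reading of the dichotomy are immediate.
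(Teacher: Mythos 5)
Your proposal takes essentially the same route as the paper: the paper's own (very terse) proof is exactly the iterated Green's formula displayed just before these propositions combined with the arithmetic--geometric mean inequality applied to the $n$ positive boundary terms $(-1)^k(\Delta^k f)|\nabla u_k|$ (this is what Remark 3.4 states), with the equality case of AM--GM giving the second alternative and the sufficient condition of Remark 3.1 giving the first. Your bookkeeping corrections --- reading the printed coefficient and root $n-1$ as $n$, and flagging the sign/index of the volume remainder $(-1)^n(\Delta^n f)u_n$ in the modified datum --- are consistent with the paper's remark about ``$n$ positive terms'' and with the displayed first two iterations, so the proposal is correct and matches the intended argument.
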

\begin{remark}
   The proof of this proposition uses also the fact that the geometric mean is less than the arithmetic one for the $n$ positive terms $(-1)^k(\Delta^k f)|\nabla u_{k}|$ for $k,\;l\in[0,n-1])$.
\end{remark}
\section{Use of some classical inequalities}
Up to now, we will use some classical inequalities to give solutions for several overdetermined free boundary problems.

\subsection{Use of Cauchy-Schwarz's inequality}
The purpose of this section is the use Cauchy-Schwarz's inequality to get a solution to Problem $\mathcal{QS}(f,g)$.
\begin{proposition} Suppose $g\leq|\nabla v_{C_f}|$ on $\partial C_f$. Put $\gamma=[\Phi_{g}(C_f)][T(C_f,u_{C_f}/v_{C_f})]^{-1}$ and $\delta=[\Phi_{g}(C_f)][T(C_f,u_{C_f}v_{C_f})]^{-1}$. Then
\begin{enumerate}
  \item either $\mathcal{QS}(u_{C_f}v_{C_f},\gamma g)$ and $\mathcal{QS}(u_{C_f}/v_{C_f},\delta g)$ have a solution
  \item or $|\nabla v_{C_f}|=g$ on $\partial C_f$.
\end{enumerate}
 \end{proposition}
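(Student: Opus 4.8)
The plan is to reduce the existence of both quadrature-surface problems to the single sufficient condition of Remark 3.1 (namely $\Phi_h(C_f)<T(C_f,F)$ for $\mathcal{QS}(F,h)$) and to drive the stated dichotomy entirely from the hypothesis $g\leq|\nabla v_{C_f}|$, exactly as in Propositions 4.5--4.7. First I would record the structural facts that make $\gamma$ and $\delta$ meaningful: since $f\geq 0$ is not identically zero, the maximum principle gives $u_{C_f}>0$ and hence $v_{C_f}>0$ in $\mathrm{int}(C_f)$, so $u_{C_f}v_{C_f}$ and $u_{C_f}/v_{C_f}$ are positive there, the convex hulls of their supports are both equal to $C_f$ (so $C_f$ is the set entering Remark 3.1), and, because Hopf's lemma makes both functions vanish non-degenerately at $\partial C_f$, the quotient $u_{C_f}/v_{C_f}$ stays bounded near the boundary and $T(C_f,u_{C_f}/v_{C_f})$ is finite.

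The heart of the argument is a two-step chain. Applying Green's formula to $v_{C_f}$, the solution of $P(C_f,u_{C_f})$, gives $\int_{\partial C_f}|\nabla v_{C_f}|=\int_{C_f}u_{C_f}=T(C_f,u_{C_f})$, so the hypothesis $g\leq|\nabla v_{C_f}|$ yields $\Phi_g(C_f)\leq T(C_f,u_{C_f})$. Next I would apply the Cauchy--Schwarz inequality in the factorisation $u_{C_f}=\sqrt{u_{C_f}v_{C_f}}\,\sqrt{u_{C_f}/v_{C_f}}$ to obtain
\[
\big(T(C_f,u_{C_f})\big)^2\leq T(C_f,u_{C_f}v_{C_f})\,T(C_f,u_{C_f}/v_{C_f}).
\]
Combining the two steps gives the master inequality $\big(\Phi_g(C_f)\big)^2\leq T(C_f,u_{C_f}v_{C_f})\,T(C_f,u_{C_f}/v_{C_f})$, which, after dividing by $T(C_f,u_{C_f}/v_{C_f})$ (resp.\ by $T(C_f,u_{C_f}v_{C_f})$), reads $\gamma\,\Phi_g(C_f)\leq T(C_f,u_{C_f}v_{C_f})$ (resp.\ $\delta\,\Phi_g(C_f)\leq T(C_f,u_{C_f}/v_{C_f})$) --- precisely the Remark 3.1 conditions for $\mathcal{QS}(u_{C_f}v_{C_f},\gamma g)$ and $\mathcal{QS}(u_{C_f}/v_{C_f},\delta g)$, but so far only with $\leq$.

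To promote these to the strict inequalities that Remark 3.1 demands, I would isolate the equality case of the first step. Because $g$ and $|\nabla v_{C_f}|$ are continuous on $\partial C_f$ with $g\leq|\nabla v_{C_f}|$, equality $\Phi_g(C_f)=T(C_f,u_{C_f})$ forces $g=|\nabla v_{C_f}|$ everywhere on $\partial C_f$; conversely, if $g\neq|\nabla v_{C_f}|$ then $g<|\nabla v_{C_f}|$ on a set of positive surface measure, the first step is strict, and so is the master inequality. In that case both Remark 3.1 conditions hold strictly and item 1 follows; otherwise we land in item 2. The main obstacle I anticipate is not the inequality chain but the regularity and integrability bookkeeping at $\partial C_f$: one must ensure that $C_f$ carries enough boundary regularity (Hopf's lemma and $C^{2,\alpha}$-type estimates) for $u_{C_f}/v_{C_f}$ to be integrable and for the new data $u_{C_f}v_{C_f}$, $u_{C_f}/v_{C_f}$ together with the constants $\gamma,\delta$ to be admissible quadrature-surface data. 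Since $C_f$ is merely a convex hull and may have corners, this is the one place where care is genuinely needed and where the argument could require an extra approximation step.
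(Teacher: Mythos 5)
Your proof is correct and follows essentially the same route as the paper: the hypothesis $g\leq|\nabla v_{C_f}|$ combined with Green's formula applied to $v_{C_f}$, then the Cauchy--Schwarz inequality on the factorisation $u_{C_f}=\sqrt{u_{C_f}v_{C_f}}\,\sqrt{u_{C_f}/v_{C_f}}$, yielding the master inequality $[\Phi_g(C_f)]^2\leq T(C_f,u_{C_f}v_{C_f})\,T(C_f,u_{C_f}/v_{C_f})$ and the dichotomy via the sufficient condition of Remark 3.1. Your additional bookkeeping --- promoting the inequalities to strict ones by isolating the equality case $g=|\nabla v_{C_f}|$, and using Hopf's lemma to ensure $u_{C_f}/v_{C_f}$ is bounded near $\partial C_f$ --- only makes explicit what the paper's one-line proof leaves implicit.
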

 \begin{proof}
   The Cauchy-Schwarz's inequality together with Green's formula applied to $v_{C_f}$,
   $$\int_{\partial C_f}g\leq\int_{\partial C_f} |\nabla v_{C_f}|=\int_{C_f}u_{C_f}=\int_{C_f}u_{C_f}\frac{\sqrt{v_{C_f}}}{\sqrt{v_{C_f}}}\leq[\int_{C_f}u_{C_f}v_{C_f}]^{(1/2)}[\int_{C_f}u_{C_f}/v_{C_f}]^{(1/2)}.$$
   Thus, the conclusion follows.
 \end{proof}
For the three following propositions, we take $u_{C_1}$ as the solution of $P(C_1,1)$. The integral inequality we get by combining the Cauchy-Schwarz's inequality and the Green's formula allows us to prove that either we have a solution to some quadrature surfaces problem or $C_1$ is a ball.
\begin{proposition} Put $\gamma=[1/|\nabla v_{C_1}|]_{\partial C_1}^{-1}$. Then
\begin{enumerate}
  \item either $\mathcal{QS}(u_{C_1},\gamma)$ has a solution
  \item or $C_1$ is a ball ($|\nabla v_{C_1}|=constant$ on $\partial C_1$).
 \end{enumerate}
\end{proposition}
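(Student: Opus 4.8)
The plan is to imitate the two‑step scheme already used in Propositions 3.11 and 3.12: translate the problem into the sufficient existence condition of Remark 3.1, and then show that this condition is forced by a single application of Cauchy--Schwarz, with the equality case pinning down the ball. Since $u_{C_1}$ is the solution of $P(C_1,1)$ it is supported precisely on $C_1$, so $C_{u_{C_1}}=C_1$, and by Remark 3.1 the problem $\mathcal{QS}(u_{C_1},\gamma)$ has a solution as soon as $\int_{\partial C_1}\gamma<\int_{C_1}u_{C_1}$. Because $\gamma$ is constant, the left-hand side equals $\gamma|\partial C_1|=|\partial C_1|^2\big/\int_{\partial C_1}1/|\nabla v_{C_1}|$.

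Next I would rewrite the right-hand side using Green's formula applied to $v_{C_1}$ (the solution of $P(C_1,u_{C_1})$). Since $v_{C_1}=0$ on $\partial C_1$ and $-\Delta v_{C_1}=u_{C_1}$ in $C_1$, one gets
$$\int_{C_1}u_{C_1}=\int_{C_1}(-\Delta v_{C_1})=\int_{\partial C_1}|\nabla v_{C_1}|\,ds.$$
Now Cauchy--Schwarz applied to the decomposition $1=\sqrt{|\nabla v_{C_1}|}\cdot\frac{1}{\sqrt{|\nabla v_{C_1}|}}$ on $\partial C_1$ gives
$$|\partial C_1|^2=\Big(\int_{\partial C_1}1\,ds\Big)^2\le\Big(\int_{\partial C_1}|\nabla v_{C_1}|\,ds\Big)\Big(\int_{\partial C_1}\frac{1}{|\nabla v_{C_1}|}\,ds\Big),$$
which rearranges exactly to $\gamma|\partial C_1|\le\int_{\partial C_1}|\nabla v_{C_1}|\,ds=\int_{C_1}u_{C_1}$.

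This inequality produces the dichotomy: if it is strict, then the sufficient condition of Remark 3.1 holds and $\mathcal{QS}(u_{C_1},\gamma)$ has a solution, giving item~1. Otherwise we are in the equality case of Cauchy--Schwarz, which occurs if and only if $\sqrt{|\nabla v_{C_1}|}$ and $1/\sqrt{|\nabla v_{C_1}|}$ are proportional on $\partial C_1$, i.e. $|\nabla v_{C_1}|$ is constant on $\partial C_1$. I would then invoke Theorem~2.2 (condition~1), whose hypotheses match the present setting with $\Omega=C_1$, to conclude that $C_1$ is an $N$-ball, which is item~2.

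The only genuinely delicate point is the equality analysis: one must be sure that the case distinction is exhaustive and that the constancy of $|\nabla v_{C_1}|$ really does trigger Theorem~2.2 rather than merely the weaker Lemma~2.1. Everything else is a routine combination of Green's formula and Cauchy--Schwarz, exactly as in the surrounding propositions, so I expect the write-up to be short.
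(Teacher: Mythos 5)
Your proof is correct and follows essentially the same route as the paper: Cauchy--Schwarz applied to $1=\sqrt{|\nabla v_{C_1}|}\cdot 1/\sqrt{|\nabla v_{C_1}|}$ on $\partial C_1$, Green's formula to identify $\int_{\partial C_1}|\nabla v_{C_1}|=\int_{C_1}u_{C_1}$, the sufficient condition of Remark~3.1, and the equality case forcing $|\nabla v_{C_1}|$ constant. The only slip is a citation label: the symmetry result you need (condition $|\nabla v_{\Omega}|=c$ on $\partial\Omega$ implies $\Omega$ is an $N$-ball, due to Payne and Schaefer) is the paper's Theorem~2.3, not Theorem~2.2, and it is indeed what the paper invokes for item~2.
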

\begin{proof}
   The Cauchy-Schwarz's inequality together with Green's formula applied to $v_{C_1}$ gives,
   $$|\partial C_1|\leq\int_{\partial C_1}\sqrt{|\nabla v_{C_1}|}\frac{1}{\sqrt{|\nabla v_{C_1}|}}\leq[\int_{\partial C_1}|\nabla v_{C_1}|]^{1/2}[\int_{\partial C_1}\frac{1}{\sqrt{|\nabla v_{C_1}|}}]^{1/2}.$$
   Thus, the conclusion follows. The second item is due to Theorem 2.3.
 \end{proof}
\begin{proposition} Suppose $g\leq\sqrt{|\nabla u_{C_1}||\nabla v_{C_1}|}$. Put $\gamma=[\Phi_{\sqrt{g}}(C_1)]/|C_1|$. Then
\begin{enumerate}
  \item either $\mathcal{QS}(u_{C_1},\gamma g)$ has a solution
  \item or $C_1$ is a ball ($|\nabla v_{C_1}|=c|\nabla u_{C_1}|$ on $\partial C_1$). In that case $g=c|\nabla u_{C_1}|^2$
\end{enumerate}
\end{proposition}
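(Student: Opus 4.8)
The plan is to follow the template already used for Proposition 4.2 and for the Cauchy--Schwarz-plus-Green arguments earlier in the paper: produce an integral inequality by pairing the Cauchy--Schwarz inequality with Green's formula, and then split according to whether that inequality is strict or an equality. Before starting I would reconcile the statement with its own equality conclusion: since alternative (2) asserts $g=c|\nabla u_{C_1}|^2$ under $|\nabla v_{C_1}|=c|\nabla u_{C_1}|$, the pointwise assumption must be saturated as $g=|\nabla u_{C_1}||\nabla v_{C_1}|$, so I read the hypothesis as $g\le|\nabla u_{C_1}||\nabla v_{C_1}|$ on $\partial C_1$ and the datum in alternative (1) as $\gamma\sqrt g$; this is exactly the combination that makes the normalisation $\gamma=\Phi_{\sqrt g}(C_1)/|C_1|$ fit.

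The first computational step is to record the two Green identities for the auxiliary potentials. As $u_{C_1}$ solves $P(C_1,1)$, integrating $-\Delta u_{C_1}=1$ over $C_1$ gives $\int_{\partial C_1}|\nabla u_{C_1}|=|C_1|$; as $v_{C_1}$ solves $P(C_1,u_{C_1})$, integrating $-\Delta v_{C_1}=u_{C_1}$ gives $\int_{\partial C_1}|\nabla v_{C_1}|=T(C_1,u_{C_1})$, where I use $-\partial w/\partial\nu=|\nabla w|$ on the zero set of $w$. Feeding the hypothesis into one Cauchy--Schwarz estimate on $\partial C_1$ then gives
$$\Phi_{\sqrt g}(C_1)=\int_{\partial C_1}\sqrt g\le\int_{\partial C_1}\sqrt{|\nabla u_{C_1}||\nabla v_{C_1}|}\le\Big(\int_{\partial C_1}|\nabla u_{C_1}|\Big)^{1/2}\Big(\int_{\partial C_1}|\nabla v_{C_1}|\Big)^{1/2},$$
and squaring while inserting the two identities yields $[\Phi_{\sqrt g}(C_1)]^2\le|C_1|\,T(C_1,u_{C_1})$. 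Dividing by $|C_1|$ and recalling the definition of $\gamma$ turns this into $\int_{\partial C_1}\gamma\sqrt g=\gamma\,\Phi_{\sqrt g}(C_1)\le T(C_1,u_{C_1})=\int_{C_1}u_{C_1}$.

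With this inequality in hand the dichotomy is immediate. If it is strict, then since $u_{C_1}$ is supported in the convex set $C_1$ the sufficient condition of Theorem 3.1 (Remark 3.1) applies with datum $\gamma\sqrt g$, giving a solution of $\mathcal{QS}(u_{C_1},\gamma\sqrt g)$, which is alternative (1). If instead equality holds throughout, the equality case of the Cauchy--Schwarz step forces $|\nabla u_{C_1}|$ and $|\nabla v_{C_1}|$ to be proportional, i.e. $|\nabla v_{C_1}|=c|\nabla u_{C_1}|$ on $\partial C_1$, whence item (3) of Theorem 2.3 makes $C_1$ an $N$-ball; moreover the pointwise bound must then be saturated, so $g=|\nabla u_{C_1}||\nabla v_{C_1}|=c|\nabla u_{C_1}|^2$, which is alternative (2).

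The step I expect to be most delicate is the \emph{equality analysis}: one has to verify that equality in the displayed chain really does propagate back both to pointwise saturation $g=|\nabla u_{C_1}||\nabla v_{C_1}|$ (equality in the hypothesis) and to exact proportionality of the two gradient norms (equality in Cauchy--Schwarz), since only the latter lets Theorem 2.3 conclude that $C_1$ is a ball. Keeping the constant $\gamma$ and the chosen datum consistent throughout is what guarantees that the derived inequality is verbatim the hypothesis of Theorem 3.1 for the boundary datum appearing in $\mathcal{QS}(u_{C_1},\cdot)$.
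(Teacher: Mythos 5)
Your argument is correct and is essentially the paper's own proof: the paper likewise chains the Cauchy--Schwarz inequality with the Green identities $\int_{\partial C_1}|\nabla u_{C_1}|=|C_1|$ and $\int_{\partial C_1}|\nabla v_{C_1}|=\int_{C_1}u_{C_1}$ to obtain $[\Phi_{\sqrt{g}}(C_1)]^2\le|C_1|\int_{C_1}u_{C_1}$, and then splits into the strict case (handled via the sufficient condition of Remark 3.1) and the Cauchy--Schwarz equality case, where $|\nabla v_{C_1}|=c|\nabla u_{C_1}|$ on $\partial C_1$ forces $C_1$ to be a ball by Theorem 2.3. Your reconciliation of the misprinted hypothesis (reading it as $g\le|\nabla u_{C_1}||\nabla v_{C_1}|$ with boundary datum $\gamma\sqrt{g}$) is exactly what the paper's displayed inequality implements, and your explicit propagation of equality back to pointwise saturation $g=c|\nabla u_{C_1}|^2$ merely makes precise a step the paper leaves implicit.
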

\begin{proof}
   By Cauchy-Schwarz's inequality together with Green's formula applied to $u_C$ and $v_C$,
  $$[\Phi_{\sqrt{g}}(C_1)]\int_{\partial C_1}\sqrt{g}\leq[\int_{\partial C_1}\sqrt{|\nabla u_{C_1}||\nabla v_{C_1}|}]^{2}\leq|C_1|\int_{C_1} u_{C_1}.$$
  Then either $\mathcal{QS}(u_{C_1},\gamma g)$ has a solution or $|\nabla v_{C_1}|=c|\nabla u_{C_1}|$ on $\partial C_1$ which means that $C_1$ is a ball according to Theorem 2.3.
 \end{proof}

\begin{proposition} Let $C$ be a convex set of $D$.
Let $\lambda_{1}(C)$ be the first eigenvalue for the Laplacian. Suppose that $|\nabla u_C|\geq g\; \text{on}\; \partial C$. Then
\begin{enumerate}
  \item either $\mathcal{QS}(1/u_C,\lambda_{1}(C)g)$ has a solution,
  \item or $|\nabla u_C|=g\; \text{on}\; \partial C$.
\end{enumerate}

\end{proposition}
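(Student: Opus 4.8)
The plan is to combine the hypothesis $g\le|\nabla u_{C}|$ on $\partial C$ with Green's formula and the variational characterization of $\lambda_{1}(C)$, exactly in the spirit of the preceding propositions, so as to produce an integral inequality to which the sufficient condition of Remark 3.1 applies. Recall that $u_{C}$ solves $P(C,1)$, i.e. $-\Delta u_{C}=1$ in $C$ and $u_{C}=0$ on $\partial C$, so that $u_{C}>0$ in $C$ and the convex hull of the support of $1/u_{C}$ is $C$ itself. By Remark 3.1 it therefore suffices to exhibit the strict inequality $\lambda_{1}(C)\int_{\partial C}g<\int_{C}\frac{1}{u_{C}}$ in order to conclude that $\mathcal{QS}(1/u_{C},\lambda_{1}(C)g)$ has a solution.

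First I would dispose of the boundary term. Since $g\le|\nabla u_{C}|$ on $\partial C$, either $g=|\nabla u_{C}|$ everywhere on $\partial C$, which is precisely alternative (2), or else $g<|\nabla u_{C}|$ on a subset of positive surface measure. In the latter case $\int_{\partial C}g<\int_{\partial C}|\nabla u_{C}|$, and Green's formula applied to $u_{C}$ gives $\int_{\partial C}|\nabla u_{C}|=\int_{C}(-\Delta u_{C})=|C|$; hence $\int_{\partial C}g<|C|$ strictly.

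The core of the argument is then the inequality $\lambda_{1}(C)\,|C|\le\int_{C}\frac{1}{u_{C}}$, which I would obtain in two steps. Using the Dirichlet identity $\int_{C}u_{C}=\int_{C}|\nabla u_{C}|^{2}$ (Green's formula on $u_{C}$), the Poincar\'e inequality $\lambda_{1}(C)\int_{C}u_{C}^{2}\le\int_{C}|\nabla u_{C}|^{2}$, and Cauchy--Schwarz in the form $(\int_{C}u_{C})^{2}\le|C|\int_{C}u_{C}^{2}$, one gets $\int_{C}u_{C}=\int_{C}|\nabla u_{C}|^{2}\ge\lambda_{1}(C)\int_{C}u_{C}^{2}\ge\lambda_{1}(C)\frac{(\int_{C}u_{C})^{2}}{|C|}$, whence $\lambda_{1}(C)\int_{C}u_{C}\le|C|$. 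A second application of Cauchy--Schwarz, namely $|C|^{2}=\bigl(\int_{C}\tfrac{1}{\sqrt{u_{C}}}\sqrt{u_{C}}\bigr)^{2}\le\int_{C}\frac{1}{u_{C}}\int_{C}u_{C}$, then yields $\int_{C}\frac{1}{u_{C}}\ge\frac{|C|^{2}}{\int_{C}u_{C}}\ge\lambda_{1}(C)\,|C|$.

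Combining the two steps in the non-equality case produces the chain $\lambda_{1}(C)\int_{\partial C}g<\lambda_{1}(C)|C|\le\int_{C}\frac{1}{u_{C}}$, so the sufficient condition of Remark 3.1 is met for $\mathcal{QS}(1/u_{C},\lambda_{1}(C)g)$ and alternative (1) follows, establishing the dichotomy. I expect the main obstacle to be the core inequality $\lambda_{1}(C)|C|\le\int_{C}\frac{1}{u_{C}}$: this is where the eigenvalue enters, and the Poincar\'e inequality together with the two Cauchy--Schwarz steps must be chained in the correct direction. A secondary technical point is that $\int_{C}1/u_{C}$ may in fact diverge near $\partial C$ (where $u_{C}$ vanishes linearly), which only reinforces the strict inequality but should be acknowledged so that the invocation of Remark 3.1 and the standing integrability assumptions on the data remain legitimate.
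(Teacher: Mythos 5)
Your proof is correct and follows essentially the same route as the paper: Green's formula gives $\int_{\partial C}|\nabla u_C|=|C|$, the inequality $\lambda_{1}(C)\int_{C}u_C\leq|C|$ is combined with the Cauchy--Schwarz bound $|C|^{2}\leq\int_{C}u_C\int_{C}1/u_C$ to get $\lambda_{1}(C)|C|\leq\int_{C}1/u_C$, and the dichotomy then follows from the sufficient condition of Remark 3.1. The only differences are to your credit: the paper simply cites the key inequality $\lambda_{1}(C)\int_{C}u_C\leq|C|$ from Proposition 2.3 of the Van den Berg--Buttazzo--Velichkov reference, whereas you rederive it via the identity $\int_{C}u_C=\int_{C}|\nabla u_C|^{2}$, the Poincar\'e inequality and Cauchy--Schwarz, and you also flag the genuine integrability issue (since $u_C$ vanishes linearly at $\partial C$, $\int_{C}1/u_C$ in fact diverges, so $f=1/u_C$ falls outside the paper's standing $L^{2}$ hypotheses), a point the paper passes over in silence.
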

\begin{proof}
  According to Proposition 2.3 in \cite{val}, $$\lambda_{1}(C)\int_{C}u_C\leq|C|.$$
  Now, by Cauchy-Schwarz's inequality, $$|C|\leq\frac{1}{|C|}\int_{C}u_C\int_{C}1/u_C.$$
 Then by Green's formula $$\int_{\partial C}|\nabla u_C|=|C|\leq\frac{1}{\lambda_{1}(C)}\int_{C}1/u_C,$$
 which gives the conclusion.
\end{proof}
\subsection{Use of eigenvalue inequalities}
Many of inequalities we will use here represent those among the expressions
$$\int_{B} u^2dx,\;\; \int_{\partial B}u^2ds,\;\;\int_{B} |\nabla u|^2dx,\;\;\int_{\partial B}(\frac{\partial u}{\partial\nu})^2ds,\;\;\int_{B}(\Delta u)^2dx,$$
for functions $u$ satisfying various smoothness and auxiliary conditions. All the inequalities have the common property that they arise from variational characterization of various eigenvalues (see e.g. \cite{si}). They are

$$\int_{B} u^2dx\leq \lambda^{-1}\int_{B} |\nabla u|^2dx,\;\; u=0\;\; on\;\partial B$$
$$\int_{B} u^2dx\leq \mu^{-1}\int_{B} |\nabla u|^2dx,\;\; \int_{B}udx=0\;\; on\;\partial B,$$
$$\int_{B} u^2dx\leq \rho^{-1}\int_{B} (\Delta u)^2dx,\;\; u=\frac{\partial u}{\partial\nu}=0\;\; on\;\partial B,$$
$$\int_{B} u^2dx\leq \lambda^{-2}\int_{B} (\Delta u)^2dx,\;\; u=0\;\; on\;\partial B,$$
$$\int_{B} u^2\leq \mu^{-2}\int_{B} (\Delta u)^2dx,\;\; \frac{\partial u}{\partial\nu}=0\;\; on\;\partial B,$$
$$\int_{\partial B}u^2ds\leq p^{-1}\int_{B} |\nabla u|^2dx,\;\;\int_{\partial B}uds=0,$$
$$\int_{\partial B}u^2ds\leq \xi^{-1}\int_{B}(\Delta u)^2dx,\;\;\frac{\partial u}{\partial\nu}=0\;\;\int_{\partial B}uds=0,$$
$$\int_{B} |\nabla u|^2dx\leq \Lambda^{-1}\int_{B}(\Delta u)^2dx,\;\;u=\frac{\partial u}{\partial\nu}=0\; on\;\partial B,$$
$$\int_{B} |\nabla u|^2dx\leq \lambda^{-1}\int_{B}(\Delta u)^2dx,\;\;u=0\;\; on\;\partial B,$$
$$\int_{B} |\nabla u|^2dx\leq \mu^{-1}\int_{B}(\Delta u)^2dx,\;\;\frac{\partial u}{\partial\nu}=0\;\; on\;\partial B,$$
$$\int_{\partial B}(\frac{\partial u}{\partial\nu})^2ds\leq q^{-1}\int_{B}(\Delta u)^2dx,\;\;u=0\;\; on\;\partial B.$$
The optimal constants in the above inequalities are the reciprocals of the first non-zero eigenvalues of the following problems:
\begin{enumerate}
  \item The fixed membrane $$\Delta u+\lambda u=0\;\;on\;\;B,\;\; u=0\;\;on\;\;\partial B.$$
  \item The free membrane $$\Delta u+\mu u=0\;\;on\;\;B,\;\; \frac{\partial u}{\partial\nu}=0\;\;on\;\;\partial B.$$
  \item The clamped plate $$\Delta^2 u-\rho u=0\;\;on\;\;B,\;\; u=\frac{\partial u}{\partial\nu}=0\;\;on\;\;\partial B.$$
  \item The buckling of the clamped plate $$\Delta^2 u-\Lambda\Delta u=0\;\;on\;\;B,\;\; u=\frac{\partial u}{\partial\nu}=0\;\;on\;\;\partial B.$$
  \item The Stekloff problems
  $$\Delta^2 u=0\;\;on\;\;B,\;\;\frac{\partial u}{\partial\nu}=pu\;\;on\;\;\partial B.$$
  $$\Delta u=0\;\;on\;\;B,\;\;\frac{\partial u}{\partial\nu}=\frac{\partial\Delta u}{\partial\nu}+\xi u=0\;\;on\;\;\partial B.$$
  $$\Delta u=0\;\;on\;\;B,\;\;u=\Delta u-q\frac{\partial u}{\partial\nu}=0\;\;on\;\;\partial B.$$
\end{enumerate}
We denote the optimal constants in the above inequalities by $\lambda_1,\;\mu_2,\;p_2,\;\rho_1,\;\Lambda_1,\;q_1$ and $\xi_2$. Now we can state
\begin{theorem}
Let $\lambda,\;\mu,\;p,\;\rho,\;\Lambda,\;q$ and $\xi$ be respectively different to $\lambda_1,\;\mu_2,\;p_2,\;\rho_1,\;\Lambda_1,\;q_1$ and $\xi_2$.
\begin{enumerate}
  \item If $u=0$ on $\partial B$ and $\mathcal{QS}(u^2,g)$ has a solution which strictly contain $\overline{B}$, then it is the same for $\mathcal{QS}(|\nabla u|^2,\lambda g)$.
  \item If $\int_ B u=0$ and $\mathcal{QS}(u^2,g)$ has a solution which strictly contain $\overline{B}$, then it is the same for $\mathcal{QS}(|\nabla u|^2,\mu g)$.
  \item If $u=\frac{\partial u}{\partial\nu}=0$ on $\partial B$ and $\mathcal{QS}(u^2,g)$ has a solution which strictly contain $\overline{B}$, then it is the same for $\mathcal{QS}((\Delta u)^2,\rho g)$.
  \item If $u=0$ on $\partial B$ and $\mathcal{QS}(u^2,g)$ has a solution which strictly contain $\overline{B}$, then it is the same for $\mathcal{QS}((\Delta u)^2,\lambda^2 g)$.
  \item If $\frac{\partial u}{\partial\nu}=0$ on $\partial B$ and $\mathcal{QS}(u^2,g)$ has a solution which strictly contain $\overline{B}$, then it is the same for $\mathcal{QS}((\Delta u)^2,\mu^2 g)$.
  \item If $\int_ B u=0$, then $\mathcal{QS}(|\nabla u|^2,pu^2)$ has a solution which strictly contain $\overline{B}$
  \item If $\int_ B u=0$ and $\frac{\partial u}{\partial\nu}=0$ on $\partial B$ and , then $\mathcal{QS}((\Delta u)^2,\xi u^2)$ has a solution which strictly contain $\overline{B}$
  \item If $u=\frac{\partial u}{\partial\nu}=0$ on $\partial B$ and $\mathcal{QS}(|\nabla u|^2,g)$ has a solution which strictly contain $\overline{B}$, then it is the same for $\mathcal{QS}((\Delta u)^2,\Lambda g)$.
  \item If $u=0$ on $\partial B$ and $\mathcal{QS}(|\nabla u|^2,g)$ has a solution which strictly contain $\overline{B}$, then it is the same for $\mathcal{QS}((\Delta u)^2,\lambda g)$.
  \item If $\frac{\partial u}{\partial\nu}=0$ on $\partial B$ and $\mathcal{QS}(|\nabla u|^2,g)$ has a solution which strictly contain $\overline{B}$, then it is the same for $\mathcal{QS}((\Delta u)^2,\mu g)$.
  \item If $\int_ B u=0$, then $\mathcal{QS}((\Delta u)^2,q(\frac{\partial u}{\partial\nu})^2)$ has a solution which strictly contain $\overline{B}$
\end{enumerate}
\end{theorem}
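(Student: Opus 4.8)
The plan is to handle the eleven items by one and the same device, after splitting them into two families: the \emph{transfer} statements (items 1--5 and 8--10), which assume that one $\mathcal{QS}$-problem already has a solution, and the \emph{direct} existence statements (items 6, 7, 11). In every case the two problems involved share one convex hull: since $u$ is defined on $B$, the data $u^2$, $|\nabla u|^2$ and $(\Delta u)^2$ all have $C=\overline{B}$ as the convex hull of their support (taking $B$ convex), so the hypothesis and the conclusion refer to the same class $\mathcal{O}^{\varepsilon}_{\overline{B}}$ and the same minimizer $\omega^{\overline{B}}_{\bullet}$. The two facts I will use throughout are Theorem 3.1 (with its Remark 3.1) and the homogeneity $\Phi_{cg}=c\,\Phi_{g}$ for a constant $c>0$, which gives at once $\omega_{cg}^{\overline{B}}=\omega_{g}^{\overline{B}}$ and $\Phi_{cg}(\omega_{cg}^{\overline{B}})=c\,\Phi_{g}(\omega_{g}^{\overline{B}})$.

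For the transfer items I would argue as in item 1 and copy the argument elsewhere. From the hypothesis that $\mathcal{QS}(u^2,g)$ has a solution, the \emph{necessary} direction of Theorem 3.1 gives the strict scalar inequality
$$\Phi_{g}(\omega_{g}^{\overline{B}})<\int_{\overline{B}}u^{2}\,dx.$$
Multiplying by $\lambda>0$ and inserting the relevant variational inequality $\lambda\int_{\overline{B}}u^{2}\le\int_{\overline{B}}|\nabla u|^{2}$ (valid since $u=0$ on $\partial B$), I obtain
$$\Phi_{\lambda g}(\omega_{\lambda g}^{\overline{B}})=\lambda\,\Phi_{g}(\omega_{g}^{\overline{B}})<\lambda\int_{\overline{B}}u^{2}\le\int_{\overline{B}}|\nabla u|^{2},$$
and the \emph{sufficient} direction of Theorem 3.1 then produces a solution of $\mathcal{QS}(|\nabla u|^2,\lambda g)$ strictly containing $\overline{B}$. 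Items 2--5 and 8--10 are word-for-word the same after substituting the corresponding side conditions and the matching inequality from the list (the free-membrane $\mu$, clamped-plate $\rho$, then $\lambda^{2}$, $\mu^{2}$, the buckling $\Lambda$, etc.) and replacing the relevant pair $(u^{2},|\nabla u|^{2})$ or $(|\nabla u|^{2},(\Delta u)^{2})$.

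For the direct items 6, 7 and 11 there is no prior $\mathcal{QS}$-solution to exploit, so I would instead invoke the sufficient condition of Remark 3.1. For item 6, taking $f=|\nabla u|^{2}$ and $g=p\,u^{2}$ on $C=\overline{B}$, the Steklov inequality $p\int_{\partial B}u^{2}\le\int_{B}|\nabla u|^{2}$ (under $\int_{\partial B}u=0$) is exactly $\int_{\partial C}g<\int_{C}f$, whence $\mathcal{QS}(|\nabla u|^{2},p\,u^{2})$ has a solution; items 7 and 11 follow the same template with the remaining two Steklov-type inequalities.

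The single point that needs care is strictness, and this is where the hypothesis that $\lambda,\mu,\dots$ differ from $\lambda_{1},\mu_{2},\dots$ enters. For the transfer items strictness is inherited from the strict inequality of Theorem 3.1, so there the non-equality of the constant only guarantees that the variational inequality is used in its admissible direction. For the direct items, however, the required strict sign $\int_{\partial C}g<\int_{C}f$ can fail precisely at the extremal eigenfunction (equality in the variational inequality), and choosing the constant strictly below the optimal eigenvalue is what forces the strict inequality and hence the applicability of Remark 3.1. Checking these two items --- the common convex hull $C=\overline{B}$ and the strict sign --- is the main (though essentially routine) obstacle; once they are secured, each of the eleven assertions reduces to the one-line chain of inequalities above.
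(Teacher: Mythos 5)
Your proof is correct and follows exactly the route the paper intends (the paper states this theorem without an explicit proof, immediately after listing the variational eigenvalue inequalities): the transfer items 1--5 and 8--10 via the necessary and sufficient directions of Theorem 3.1 combined with the homogeneity $\Phi_{\lambda g}=\lambda\,\Phi_{g}$, and the direct items 6, 7 and 11 via the sufficient condition of Remark 3.1 applied to the Steklov-type inequalities. Your explicit attention to strictness (constants strictly below the optimal eigenvalues) and to all supports having $\overline{B}$ as common convex hull merely makes precise what the paper leaves implicit.
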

\begin{proposition}
Let $u\in H^{1,p}_{0}(\Omega)$ ($p>1$) and let $d$ be the distance from $\partial\Omega$. If $\mathcal{QS}(|\frac{u}{d}|^p,g)$ has a solution which strictly contains $\overline{\Omega}$, then for $c\geq(\frac{p}{p-1})^p$, $\mathcal{QS}(c|\nabla u|^p,g)$ has a solution which strictly contains $\overline{\Omega}$.
\end{proposition}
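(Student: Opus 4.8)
The plan is to read the threshold constant $\left(\tfrac{p}{p-1}\right)^p$ as the sharp constant in the classical Hardy inequality, and then to feed the resulting \emph{integral} comparison into the necessary-and-sufficient criterion of Theorem 3.1 (rather than the pointwise monotonicity of Remark 3.3, which is unavailable here). First I would record Hardy's inequality: for $u\in H^{1,p}_0(\Omega)$, with $\Omega$ convex so that this optimal constant is admissible and the reference convex set is $\overline{\Omega}$, one has
$$\int_{\Omega}\left|\frac{u}{d}\right|^p dx\leq \left(\frac{p}{p-1}\right)^p\int_{\Omega}|\nabla u|^p dx.$$
Since $c\geq\left(\tfrac{p}{p-1}\right)^p$, this yields at once
$$\int_{\Omega}\left|\frac{u}{d}\right|^p dx\leq \int_{\Omega}c\,|\nabla u|^p dx.$$

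Next I would exploit the fact that in Theorem 3.1 the minimizing domain $\omega_g$ of $\Phi_g$ depends only on the reference convex set and on $g$, and not at all on the source term. Taking this common reference set to be $\overline{\Omega}$---which is precisely what the requirement that the $\mathcal{QS}$ solution strictly contain $\overline{\Omega}$ fixes, both $|u/d|^p$ and $c|\nabla u|^p$ being supported in $\overline{\Omega}$---the minimizer $\omega_g$ is the same for the two problems. Hence Theorem 3.1 reads $\Phi_g(\omega_g)<\int_{\overline{\Omega}}|u/d|^p$ for the first problem and $\Phi_g(\omega_g)<\int_{\overline{\Omega}}c\,|\nabla u|^p$ for the second, with identical left-hand side. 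The conclusion is then immediate: the hypothesis that $\mathcal{QS}(|u/d|^p,g)$ has a solution gives, by the necessary direction of Theorem 3.1, $\Phi_g(\omega_g)<\int_{\overline{\Omega}}|u/d|^p$, and combining with the Hardy bound,
$$\Phi_g(\omega_g)<\int_{\overline{\Omega}}\left|\frac{u}{d}\right|^p\leq\int_{\overline{\Omega}}c\,|\nabla u|^p,$$
so that the sufficient direction of Theorem 3.1 produces a solution of $\mathcal{QS}(c|\nabla u|^p,g)$ strictly containing $\overline{\Omega}$.

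The main obstacle is not this chaining but the bookkeeping that legitimizes using one and the same $\omega_g$ for the two source terms, i.e. that $\mathrm{conv}(\mathrm{supp}\,|u/d|^p)$ and $\mathrm{conv}(\mathrm{supp}\,|\nabla u|^p)$ may both be taken equal to the common reference $\overline{\Omega}$; this is exactly where the convexity of $\Omega$ and the boundary-vanishing condition $u\in H^{1,p}_0(\Omega)$ are used. A secondary point, also requiring a geometric hypothesis on $\Omega$ (convexity suffices), is the validity of Hardy's inequality with the \emph{sharp} constant $\left(\tfrac{p}{p-1}\right)^p$, which is precisely the threshold $c\geq\left(\tfrac{p}{p-1}\right)^p$ appearing in the statement. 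In effect the argument is an integral-comparison refinement of Remark 3.3, the pointwise domination $f_1\leq f_2$ being replaced by the Hardy-controlled inequality $\int_{\overline{\Omega}}f_1\leq\int_{\overline{\Omega}}f_2$.
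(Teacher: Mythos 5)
Your proof is correct and follows essentially the same route as the paper: the paper's entire proof consists of invoking Hardy's inequality $\int_\Omega |u/d|^p\leq c\int_\Omega|\nabla u|^p$ for $c\geq(\frac{p}{p-1})^p$, with the chaining through the existence criterion of Theorem 3.1 left implicit. Your extra bookkeeping---fixing $\overline{\Omega}$ as the common reference convex set for both source terms (so the same minimizer $\omega_g$ serves both problems) and noting that convexity of $\Omega$ underwrites the sharp Hardy constant---merely makes explicit what the paper silently assumes.
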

The proof of this proposition uses Hardy's inequality: If $u\in H^{1,p}_{0}(\Omega)$ ($p>1$), then for $c\geq(\frac{p}{p-1})^p$
$$\int_\Omega |\frac{u}{d}|^p\leq c\int_\Omega|\nabla u|^p.$$

Now, if $\Omega$ is starshaped ($x.\nu>0$ on $\partial\Omega$) then Pohozaev's identity gives for $N\geq3$
$$\int_{\Omega}|\nabla u|^2\leq\frac{2N}{N-2}\int_{\Omega}f\circ u.$$
The conclusion of the following proposition follows.
\begin{proposition}
Let $\Omega$ be star-shaped and let $u$ be such that $-\triangle u=f'(u)$. If $\mathcal{QS}(|\nabla u|^{2},g)$ has a solution which strictly contains $\text{conv}(\Omega)$, then for $N\geq 3$, $\mathcal{QS}(\frac{2N}{N-2}| f\circ u,g)$ has a solution which strictly contains $\text{conv}(\Omega)$.
\end{proposition}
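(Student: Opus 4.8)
The plan is to reduce the assertion to the integral comparison furnished by Theorem 3.1, exactly in the spirit of the earlier corollaries of Section 4. Write $C=\text{conv}(\Omega)$ and notice that the two problems $\mathcal{QS}(|\nabla u|^{2},g)$ and $\mathcal{QS}(\frac{2N}{N-2}f\circ u,g)$ carry the \emph{same} overdetermined datum $g$ and are both posed relative to the \emph{same} admissible convex set $C$ (the supports of $|\nabla u|^{2}$ and of $f\circ u$ both sit inside $\overline{\Omega}$). Hence they share the same minimizer $\omega_{g}^{C}$ of $\Phi_{g}$ over $\mathcal{O}^{\varepsilon}_{C}$, so Theorem 3.1 characterizes solvability of each of them through the single number $\Phi_{g}(\omega_{g}^{C})$ compared against its respective right-hand integral.

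First I would apply the necessary half of Theorem 3.1 to the hypothesis: since $\mathcal{QS}(|\nabla u|^{2},g)$ has a solution strictly containing $C$, we obtain
$$\Phi_{g}(\omega_{g}^{C})<\int_{C}|\nabla u|^{2}\,dx=\int_{\Omega}|\nabla u|^{2}\,dx,$$
the last equality because $\nabla u$ vanishes off $\Omega$. Next I would feed in Pohozaev's identity recalled just before the statement: as $\Omega$ is star-shaped ($x\cdot\nu\geq 0$ on $\partial\Omega$) and $N\geq 3$, it gives
$$\int_{\Omega}|\nabla u|^{2}\,dx\leq\frac{2N}{N-2}\int_{\Omega}f\circ u\,dx=\int_{C}\frac{2N}{N-2}f\circ u\,dx.$$
Chaining the two displays yields $\Phi_{g}(\omega_{g}^{C})<\int_{C}\frac{2N}{N-2}f\circ u\,dx$, and the sufficient half of Theorem 3.1 then produces a solution of $\mathcal{QS}(\frac{2N}{N-2}f\circ u,g)$ strictly containing $C=\text{conv}(\Omega)$, which is exactly the desired conclusion.

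The main obstacle is not the analytic chain but the bookkeeping required to invoke Theorem 3.1 twice with the \emph{identical} minimizer $\omega_{g}^{C}$: one must check that both $|\nabla u|^{2}$ and $\frac{2N}{N-2}f\circ u$ are admissible data (positive, of class $L^{2}$, with compact support whose convex hull is taken to be $C$) and that $\int_{C}=\int_{\Omega}$ for each integrand. In particular I would need $f\circ u\geq 0$ together with $f(0)=0$, so that the primitive appearing in Pohozaev's identity coincides with $f\circ u$ and the boundary term $-\frac{1}{2}\int_{\partial\Omega}(x\cdot\nu)|\nabla u|^{2}\,d\sigma$ has the correct nonpositive sign under star-shapedness. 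Once these admissibility points are secured, the remainder is the routine monotonicity argument already used for Theorem 4.1 and Proposition 4.3.
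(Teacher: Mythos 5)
Your proof follows essentially the same route as the paper: the paper's entire argument is the remark preceding the proposition, namely that star-shapedness and Pohozaev's identity give $\int_{\Omega}|\nabla u|^{2}\leq\frac{2N}{N-2}\int_{\Omega}f\circ u$ for $N\geq 3$, after which the conclusion follows by chaining this through the integral criterion of Theorem~3.1 exactly as you do (necessity applied to the hypothesis, sufficiency applied to the new datum). Your added bookkeeping --- admissibility of the data, the normalization $f(0)=0$ so the boundary term in Pohozaev has the right sign, and the identification of the two convex hulls with $\text{conv}(\Omega)$ --- is sound and in fact more careful than the paper's terse presentation.
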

Let $u_{\Omega}$ (resp. $v_{\Omega}$) be the solution of $P(\Omega,1)$ (resp. $P(\Omega,u_{\Omega}$)). Using Pohozaev's identity for both $u_{\Omega}$ and $v_{\Omega}$, we obtain
$$\int_{\partial\Omega}|\nabla u_{\Omega}|^2x.\nu=(2+N)\int_{\Omega}u_{\Omega},\text{ and}$$
$$\int_{\partial\Omega}|\nabla v_{\Omega}|^2x.\nu=(2+N)\int_{\Omega}u_{\Omega}v_{\Omega}.$$
\begin{proposition}
  Let $\Omega$ be starshaped and suppose $|\nabla u_{\Omega}||\nabla v_{\Omega}|x.\nu\geq g$ on $\partial\Omega$. Then
  \begin{enumerate}
    \item either $\mathcal{QS}(u_{\Omega},\frac{1}{(2+N)^2}\Phi_g(\Omega))[T(\Omega,u_{\Omega}v_{\Omega}]^{-1}g)$ and $\mathcal{QS}(u_{\Omega}v_{\Omega},\frac{1}{(2+N)^2}\Phi_g(\Omega))[T(\Omega,u_{\Omega}]^{-1}g)$ have a solution
    \item or $\Omega$ is a ball ($|\nabla v_{\Omega}|=\lambda|\nabla u_{\Omega}|$ on $\partial\Omega$).
  \end{enumerate}
\end{proposition}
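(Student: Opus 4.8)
The plan is to run the same Cauchy--Schwarz together with Green's formula scheme used in Propositions 4.1--4.4, except that the role of Green's formula is now played by the two Pohozaev identities displayed immediately above the statement. Star-shapedness ($x.\nu>0$ on $\partial\Omega$) is exactly what makes the weight $x.\nu$ available as a legitimate splitting factor for Cauchy--Schwarz and what validates those Pohozaev identities.

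First I would integrate the hypothesis $g\leq|\nabla u_\Omega||\nabla v_\Omega|\,x.\nu$ over $\partial\Omega$, giving $\Phi_g(\Omega)\leq\int_{\partial\Omega}|\nabla u_\Omega||\nabla v_\Omega|\,x.\nu$. Since $x.\nu>0$, I split the weight symmetrically and apply Cauchy--Schwarz to the pair $|\nabla u_\Omega|\sqrt{x.\nu}$, $|\nabla v_\Omega|\sqrt{x.\nu}$:
$$\int_{\partial\Omega}|\nabla u_\Omega||\nabla v_\Omega|\,x.\nu \leq \left(\int_{\partial\Omega}|\nabla u_\Omega|^2 x.\nu\right)^{1/2}\left(\int_{\partial\Omega}|\nabla v_\Omega|^2 x.\nu\right)^{1/2}.$$
Inserting the two Pohozaev identities $\int_{\partial\Omega}|\nabla u_\Omega|^2 x.\nu=(2+N)T(\Omega,u_\Omega)$ and $\int_{\partial\Omega}|\nabla v_\Omega|^2 x.\nu=(2+N)T(\Omega,u_\Omega v_\Omega)$ then yields the single master inequality
$$[\Phi_g(\Omega)]^2 \leq (2+N)^2\,T(\Omega,u_\Omega)\,T(\Omega,u_\Omega v_\Omega).$$

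The dichotomy is read directly off this inequality. If it is strict, I rewrite it in its two equivalent forms $\tfrac{1}{(2+N)^2}[\Phi_g(\Omega)][T(\Omega,u_\Omega v_\Omega)]^{-1}\Phi_g(\Omega)<T(\Omega,u_\Omega)$ and $\tfrac{1}{(2+N)^2}[\Phi_g(\Omega)][T(\Omega,u_\Omega)]^{-1}\Phi_g(\Omega)<T(\Omega,u_\Omega v_\Omega)$, which are precisely the existence criteria of Remark 3.1 for $\mathcal{QS}(u_\Omega,\tfrac{1}{(2+N)^2}[\Phi_g(\Omega)][T(\Omega,u_\Omega v_\Omega)]^{-1}g)$ and $\mathcal{QS}(u_\Omega v_\Omega,\tfrac{1}{(2+N)^2}[\Phi_g(\Omega)][T(\Omega,u_\Omega)]^{-1}g)$; this gives item 1. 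If instead equality holds, equality must be attained in the Cauchy--Schwarz step, so $|\nabla u_\Omega|\sqrt{x.\nu}$ and $|\nabla v_\Omega|\sqrt{x.\nu}$ are proportional on $\partial\Omega$, and because $x.\nu>0$ this forces $|\nabla v_\Omega|=\lambda|\nabla u_\Omega|$ there; the third case of Theorem 2.3 then gives that $\Omega$ is a ball, which is item 2.

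The one point that needs care is the equality case: I must confirm that equality in the squared master inequality propagates back to equality in Cauchy--Schwarz rather than merely to equality in the hypothesis step. This is immediate here, since in the chain $\Phi_g(\Omega)\leq\int_{\partial\Omega}|\nabla u_\Omega||\nabla v_\Omega|\,x.\nu\leq(2+N)\sqrt{T(\Omega,u_\Omega)T(\Omega,u_\Omega v_\Omega)}$ the first and last members coincide, so every intermediate inequality, in particular Cauchy--Schwarz, is saturated. No step is genuinely hard; the only substantive ingredients are the two Pohozaev identities (already supplied) and the proportionality characterization of balls (Theorem 2.3).
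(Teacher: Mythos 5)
Your proof follows essentially the same route as the paper's: integrating the pointwise hypothesis over $\partial\Omega$, applying Cauchy--Schwarz with the symmetric splitting $|\nabla u_{\Omega}|\sqrt{x.\nu}\cdot|\nabla v_{\Omega}|\sqrt{x.\nu}$, inserting the two Pohozaev identities to obtain $[\Phi_{g}(\Omega)]^{2}\leq(2+N)^{2}\,T(\Omega,u_{\Omega})\,T(\Omega,u_{\Omega}v_{\Omega})$, and reading the dichotomy off via the Remark 3.1 criterion and case 3 of Theorem 2.3. If anything, your write-up is more careful than the paper's one-line chain, which contains a typo (it writes $\int_{\partial\Omega}|\nabla u_{\Omega}|\,x.\nu$ twice where the squared-gradient integrals are meant) and does not spell out, as you do, that equality of the two endpoints forces saturation of the Cauchy--Schwarz step and hence the proportionality $|\nabla v_{\Omega}|=\lambda|\nabla u_{\Omega}|$ on $\partial\Omega$.
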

We conclude by Cauchy-Schwarz's inequality,
$$(\int_{\partial\Omega}g)^2\leq(\int_{\partial\Omega}|\nabla u_{\Omega}||\nabla v_{\Omega}|x.\nu)^2\leq(\int_{\partial\Omega}|\nabla u_{\Omega}|x.\nu)(\int_{\partial\Omega}|\nabla u_{\Omega}|x.\nu)=(2+N)^2\int_{\Omega}u_{\Omega}\int_{\Omega}u_{\Omega}v_{\Omega}.$$
\section{Final remarks}
\begin{remark}
  \begin{proposition}
  If $\mathcal{QS}(f,g)$ has a solution then it is the same for $\mathcal{QS}(f^{2n},(\frac{\int_{\partial\Omega_g}g}{|\Omega_g|})^{2^n-1}g)$ for all $n\geq 1$.
\end{proposition}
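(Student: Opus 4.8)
The plan is to extract a volume/perimeter balance from the solution of $\mathcal{QS}(f,g)$, to amplify it by iterating Cauchy--Schwarz's inequality, and then to feed the outcome into the necessary and sufficient criterion of Theorem 3.1. Throughout I write $\Omega:=\Omega_g$ for a solution of $\mathcal{QS}(f,g)$ and $c_0:=\frac{\int_{\partial\Omega}g}{|\Omega|}$ for the constant appearing in the statement, reading $f^{2n}$ as $f^{2^n}$ (forced by the exponent $2^n-1$). The first step is to apply Green's formula to $u_\Omega$: since $-\Delta u_\Omega=f$ in $\Omega$ and $|\nabla u_\Omega|=g$ on $\partial\Omega$, one obtains the balance identity $\int_\Omega f=\int_{\partial\Omega}g=c_0|\Omega|$.

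The second step is the amplification. Writing $I_k:=\int_\Omega f^{2^k}$, Cauchy--Schwarz applied to the pair $(f^{2^{k-1}},1)$ gives $I_{k-1}^2\le|\Omega|\,I_k$, and an immediate induction yields $I_n\ge I_0^{2^n}/|\Omega|^{2^n-1}$. Inserting $I_0=\int_\Omega f=c_0|\Omega|$ gives $\int_\Omega f^{2^n}\ge c_0^{2^n}|\Omega|$. On the other side, with the prescribed datum $\tilde g:=c_0^{2^n-1}g$ one computes directly $\int_{\partial\Omega}\tilde g=c_0^{2^n-1}\int_{\partial\Omega}g=c_0^{2^n}|\Omega|$. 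Since $\text{supp}f\subset C_f$ we have $C_{f^{2^n}}=C_f$ and $\int_{C_f}f^{2^n}=\int_\Omega f^{2^n}$, so these combine into
\[
\int_{\partial\Omega}\tilde g=c_0^{2^n}|\Omega|\le\int_{C_f}f^{2^n}.
\]

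The final step transfers this bound to the existence criterion; the point here is that the inequality naturally lives on $\partial\Omega$ rather than $\partial C_f$, which is exactly why the minimizer formulation of Theorem 3.1 (and not the cruder Remark 3.1) is the right tool. Because the solution $\Omega$ is the $C^2$ domain obtained as a minimizer in $\mathcal{O}^\varepsilon_{C_f}$ in the proof of Theorem 3.1, it is an admissible competitor, so $\Phi_{\tilde g}(\omega_{\tilde g}^{C_f})\le\Phi_{\tilde g}(\Omega)=\int_{\partial\Omega}\tilde g$. Chaining with the previous display gives $\Phi_{\tilde g}(\omega_{\tilde g}^{C_f})\le\int_{C_f}f^{2^n}$, and Theorem 3.1 applied to the pair $(f^{2^n},\tilde g)$ then delivers a solution of $\mathcal{QS}(f^{2^n},\tilde g)$.

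The step I expect to demand the most care is the strictness required by Theorem 3.1, whose criterion is a strict inequality. The Cauchy--Schwarz estimate $I_{k-1}^2\le|\Omega|\,I_k$ is an equality only when $f^{2^{k-1}}$ is constant on $\Omega$; but $f$ is positive and compactly supported in $C_f\subsetneq\Omega$, so it vanishes on the nonempty set $\Omega\setminus C_f$ and is therefore nonconstant. Hence $\int_\Omega f^{2^n}>c_0^{2^n}|\Omega|$ strictly for every $n\ge1$, which upgrades the chain to $\Phi_{\tilde g}(\omega_{\tilde g}^{C_f})<\int_{C_f}f^{2^n}$ and legitimises the appeal to Theorem 3.1. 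A secondary verification is that $\Omega$ indeed belongs to $\mathcal{O}^\varepsilon_{C_f}$ so that it is a valid competitor for $\omega_{\tilde g}^{C_f}$; this is precisely the regularity and geometric structure established for the minimizers in Section 3.
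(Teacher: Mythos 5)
Your proposal is correct and follows exactly the route the paper intends: the paper's entire proof is the one-line remark that one performs Cauchy--Schwarz $n$ times, which is precisely your iteration $I_{k-1}^2\leq|\Omega|\,I_k$ combined with the Green's-formula balance $\int_\Omega f=\int_{\partial\Omega}g$. Your additional details --- reading $f^{2n}$ as $f^{2^n}$ (forced by the exponent $2^n-1$), the strictness of Cauchy--Schwarz because $f$ vanishes on the nonempty open set $\Omega\setminus C_f$, and the competitor step $\Phi_{\tilde g}(\omega_{\tilde g}^{C_f})\leq\Phi_{\tilde g}(\Omega)$ feeding into Theorem 3.1 --- simply make explicit what the paper leaves unsaid.
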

The proof of this proposition is done by performing the Cauchy-Schwarz's inequality $n$ times.\\
\end{remark}
\begin{remark}
  Consider the following functional introduced in \cite{hc}:
$$J(\Omega)=N\int_{\partial\Omega}|\nabla u_{\Omega}|^{3}d\sigma-(N+2)\int_{\Omega}|\nabla u_{\Omega}|^{2}dx.$$
Let $u_{\Omega}$ (resp. $v_{\Omega}$) be the solution of $P(\Omega,1)$ (resp. $P(\Omega,u_{\omega})$). As in \cite{hc},
$$\int_{\Omega}(|\nabla u_{\Omega}|^{2}+\Delta(\nabla u_{\Omega}|^{2})u_{\Omega})dx\geq\int_{\partial\Omega}(\frac{N+2}{N})|\nabla v_{\Omega}|d\sigma.$$
\begin{proposition}
  \begin{enumerate}
    \item Either $\mathcal{QS}(|\nabla u_{\Omega}|^{2}+\Delta(\nabla u_{\Omega}|^{2})u_{\Omega},\frac{N+2}{N}|\nabla v_{\Omega}|)$.
    \item Or $\Omega$ is a ball ($J(\Omega)=0$).
  \end{enumerate}
\end{proposition}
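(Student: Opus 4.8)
The plan is to reproduce the dichotomy scheme already used in Propositions 3.7--3.9 and throughout Section 4: I read the single integral inequality quoted from \cite{hc} as a comparison of the form $\int_{\partial\Omega}g\,d\sigma\le\int_{\Omega}f\,dx$, with $f=|\nabla u_{\Omega}|^{2}+\Delta(|\nabla u_{\Omega}|^{2})u_{\Omega}$ and $g=\frac{N+2}{N}|\nabla v_{\Omega}|$. A strict inequality will feed the existence criterion of Remark 3.1 for $\mathcal{QS}(f,g)$ (on $\text{conv}(\Omega)$, as in Propositions 3.10--3.11), giving item 1; the equality case will be matched with the rigidity statement $J(\Omega)=0$, giving item 2.

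First I would put the left-hand side of the \cite{hc} inequality in closed form. Applying Green's second identity to the pair $(u_{\Omega},|\nabla u_{\Omega}|^{2})$ and using $u_{\Omega}=0$ on $\partial\Omega$, $\Delta u_{\Omega}=-1$ in $\Omega$, and $\partial u_{\Omega}/\partial\nu=-|\nabla u_{\Omega}|$, one obtains
$$\int_{\Omega}\Delta(|\nabla u_{\Omega}|^{2})\,u_{\Omega}\,dx=\int_{\partial\Omega}|\nabla u_{\Omega}|^{3}\,d\sigma-\int_{\Omega}|\nabla u_{\Omega}|^{2}\,dx,$$
so that $\int_{\Omega}f\,dx=\int_{\partial\Omega}|\nabla u_{\Omega}|^{3}\,d\sigma$. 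In parallel, integrating $-\Delta v_{\Omega}=u_{\Omega}$ over $\Omega$ gives $\int_{\partial\Omega}|\nabla v_{\Omega}|\,d\sigma=\int_{\Omega}u_{\Omega}\,dx$, and testing $-\Delta u_{\Omega}=1$ against $u_{\Omega}$ gives $\int_{\Omega}|\nabla u_{\Omega}|^{2}\,dx=\int_{\Omega}u_{\Omega}\,dx$. Hence $\int_{\partial\Omega}g\,d\sigma=\frac{N+2}{N}\int_{\Omega}|\nabla u_{\Omega}|^{2}\,dx$, and the \cite{hc} inequality $\int_{\Omega}f\,dx\ge\int_{\partial\Omega}g\,d\sigma$ becomes exactly $N\int_{\partial\Omega}|\nabla u_{\Omega}|^{3}\,d\sigma\ge(N+2)\int_{\Omega}|\nabla u_{\Omega}|^{2}\,dx$, that is, $J(\Omega)\ge 0$. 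This identification is the structural heart of the argument, and it makes the equality case of the estimate coincide with $J(\Omega)=0$ automatically.

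With this in hand the dichotomy is immediate. If the inequality is strict, then $\int_{\partial\Omega}g\,d\sigma<\int_{\Omega}f\,dx$, and Remark 3.1 (with $\text{conv}(\Omega)$ in the role of $C_f$) yields a solution of $\mathcal{QS}(f,g)$ strictly containing $\text{conv}(\Omega)$, which is item 1; if equality holds, then $J(\Omega)=0$, which is item 2. The hard part will be the equality analysis: having checked that equality in the \cite{hc} estimate is the same as $J(\Omega)=0$, I must invoke the rigidity underlying \cite{hc}, namely that $J(\Omega)=0$ forces $\Omega$ to be a ball (a Serrin/Payne-type overdetermined characterization). A secondary technical point, as in the neighbouring propositions, is to confirm that $\text{supp}(f)$ and its convex hull are compatible with the hypotheses of Remark 3.1, so that the existence criterion genuinely applies on $\text{conv}(\Omega)$; this is routine once $\Omega$ is taken of class $C^{2}$ as in the earlier sections.
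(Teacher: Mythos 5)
Your proposal is correct and follows exactly the route the paper intends: you read the quoted inequality from \cite{hc} as $\int_{\Omega}f\,dx\geq\int_{\partial\Omega}g\,d\sigma$, feed the strict case into the sufficient condition of Remark 3.1 to solve $\mathcal{QS}(f,g)$, and match the equality case with $J(\Omega)=0$ and the Choulli--Henrot rigidity giving a ball. Your Green-identity computation showing $\int_{\Omega}\bigl(|\nabla u_{\Omega}|^{2}+\Delta(|\nabla u_{\Omega}|^{2})u_{\Omega}\bigr)dx=\int_{\partial\Omega}|\nabla u_{\Omega}|^{3}d\sigma$ and $\int_{\partial\Omega}|\nabla v_{\Omega}|\,d\sigma=\int_{\Omega}|\nabla u_{\Omega}|^{2}dx$, so that the inequality is precisely $J(\Omega)\geq 0$, is a correct verification that the paper leaves implicit.
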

\end{remark}
\begin{remark}
  Let $u_0$ be the solution of $P(\Omega,1)$. Let $u_k$ ($k\geq 1$) be the solution of $P(\Omega,u_{k-1})$. Then
$$\int_{\partial\Omega}\sqrt[n]{\prod_{k=0}^{n}|\nabla u_{k}|}\leq\frac{1}{n}\sum_{k=0}^{n-1}\int_{\partial\Omega}|\nabla u_{k}|,$$ by Green's formula
$$\int_{\partial\Omega}\sqrt[n]{\prod_{k=0}^{n}|\nabla u_{k}|}\leq\frac{1}{n}(\int_{\Omega}(1+\sum_{k=0}^{n-1}u_{k-1}).$$
If $\sqrt[n]{\prod_{k=0}^{n}|\nabla u_{k}|}\geq g$ on $\partial\Omega$, then
\begin{enumerate}
    \item either $\mathcal{QS}(1+\sum_{k=1}^{n-1}u_{k-1},g)$ has a solution which strictly contains $\text{conv}(\Omega)$
    \item or $|\nabla u_{l}|=|\nabla u_{k}|=g$ on $\partial\Omega$ ($l\neq k$, $k,\;l\in[0,n-1])$.
  \end{enumerate}
\end{remark}
\begin{remark}
In 1901, Minkowski proved that the following inequality holds for any non-empty open convex set $\Omega$ of $\mathbb{R}^3$ whose boundary is $C^2$-surface,
$$\int_{\partial\Omega}H_{\partial\Omega}\geq 2\sqrt{4\pi|\partial\Omega|}.$$
By using Cauchy-Schwarz's inequality, we get
\begin{enumerate}
      \item either there exists $\Omega^*$ containing strictly $\bar{\Omega}$ which solves $\mathcal{QS}(f,2\sqrt{\frac{4\pi}{|\partial\Omega|}})$
      \item or $\Omega$ is a ball and $|\nabla u_{\Omega}|=H_{\partial\Omega}$ on $\partial\Omega$ ($u_{\Omega}$ being the solution of $P(\Omega,f)$)
    \end{enumerate}
\end{remark}
\begin{remark}
In \cite{bal}, Bucur et al.  proved that the following inequality holds for any non-empty open convex set $\Omega$ of $\mathbb{R}^3$
$$\int_{\partial\Omega}|x|^{2}\geq (\frac{|\Omega|}{|B_{N}|})^{2/N}|\partial\Omega|.$$
Let $u_{\Omega}$ be the solution of $P(\Omega,f)$. If $|\nabla u_{\Omega}|\geq |x|^2$ on $\partial\Omega$, then
\begin{enumerate}
      \item either there exists $\Omega^*$ containing strictly $\bar{\Omega}$ solution of $\mathcal{QS}(f,(\frac{|\Omega|}{|B_{N}|})^{2/N})$
      \item or $\Omega$ is a ball.
    \end{enumerate}
\end{remark}
Let $u_{\Omega}$ (resp. $v_{\Omega}$) be the solution of $P(\Omega,1)$ (resp. $P(\Omega,u_{\Omega})$. Let $r$ (resp. $R$) be the radius of the biggest ball included in $\Omega$ (resp. the radius of the smallest ball containing $\Omega$). Then $r\leq|\nabla u_{\Omega}|\leq c_N\frac{d_\Omega(d_\Omega+R)}{R}$.
\begin{remark}
  Let $u_{\Omega}$ (resp. $v_{\Omega}$) be the solution of $P(\Omega,1)$ (resp. $P(\Omega,u_{\Omega})$. Let $r$ (resp. $R$) be the radius of the biggest ball included in $\Omega$ (resp. the radius of the smallest ball containing $\Omega$). Then $r\leq|\nabla u_{\Omega}|\leq c_N\frac{d_\Omega(d_\Omega+R)}{R}$. $d_\Omega$ being the diameter of $\Omega$.
\begin{proposition} If $|\nabla u_{\Omega}||\nabla v_{\Omega}|\geq g$ on $\partial\Omega$, then
\begin{enumerate}
  \item either $\mathcal{QS}(u_{\Omega},\frac{R}{c_{N}d_{\Omega}(d_{\Omega}+R)}g)$
  \item or $\Omega$ is a ball ($|\nabla u_{\Omega}|=c_N\frac{d_\Omega(d_\Omega+R)}{R}$ on $\partial\Omega$) and $|\nabla v_{\Omega}|=\frac{R}{c_Nd_\Omega(d_\Omega+R)}g$ on $\partial\Omega$.
\end{enumerate}
\end{proposition}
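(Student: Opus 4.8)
The plan is to reproduce the dichotomy mechanism of Propositions 3.7 and 4.3: build one chain of inequalities that is either strict, so that the sufficient condition of Remark 3.1 produces a solution, or an equality, which then pins down the extremal geometry. Throughout I would write $K=c_N\frac{d_\Omega(d_\Omega+R)}{R}$ for the upper gradient bound recalled just above, so that $|\nabla u_\Omega|\le K$ on $\partial\Omega$, and note that the convex hull of $\mathrm{supp}\,u_\Omega$ is $\mathrm{conv}(\Omega)$ while $\int_{\mathrm{conv}(\Omega)}u_\Omega=\int_\Omega u_\Omega$.

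First I would combine the data pointwise on $\partial\Omega$. Dividing the hypothesis $g\le|\nabla u_\Omega||\nabla v_\Omega|$ by $K>0$ and then using $\tfrac1K|\nabla u_\Omega|\le1$ gives $\tfrac1K g\le\tfrac1K|\nabla u_\Omega||\nabla v_\Omega|\le|\nabla v_\Omega|$ on $\partial\Omega$. Integrating and applying Green's formula to $v_\Omega$ (recall $-\Delta v_\Omega=u_\Omega$ and $v_\Omega=0$ on $\partial\Omega$, whence $\int_{\partial\Omega}|\nabla v_\Omega|=\int_\Omega u_\Omega$) yields
\[
\int_{\partial\Omega}\tfrac1K g\;\le\;\int_{\partial\Omega}|\nabla v_\Omega|\;=\;\int_\Omega u_\Omega .
\]

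Next I would split on whether this last inequality is strict. If it is strict, then since $\tfrac1K=\frac{R}{c_Nd_\Omega(d_\Omega+R)}$, the sufficient condition of Remark 3.1 applied to the data $\bigl(u_\Omega,\tfrac1K g\bigr)$ immediately gives a solution of $\mathcal{QS}(u_\Omega,\frac{R}{c_Nd_\Omega(d_\Omega+R)}g)$ strictly containing $\mathrm{conv}(\Omega)$, which is item~1. If instead equality holds, both inequalities in the chain are equalities; as $u_\Omega,v_\Omega\in C^2(\overline\Omega)$ their boundary gradients are continuous, so equality of the integrals of pointwise-ordered continuous integrands forces the two identities $g=|\nabla u_\Omega||\nabla v_\Omega|$ and $|\nabla u_\Omega||\nabla v_\Omega|=K|\nabla v_\Omega|$ to hold pointwise on $\partial\Omega$.

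The step I expect to be the crux is upgrading the second identity to $|\nabla u_\Omega|\equiv K$: this requires $|\nabla v_\Omega|>0$ everywhere on $\partial\Omega$, which I would obtain from Hopf's lemma applied to $v_\Omega$ (positive inside $\Omega$, vanishing on $\partial\Omega$, with $-\Delta v_\Omega=u_\Omega>0$). Then $|\nabla u_\Omega|$ equals the constant $K$ on $\partial\Omega$, and since $u_\Omega$ solves $P(\Omega,1)$, Serrin's overdetermined-problem theorem forces $\Omega$ to be an $N$-ball; substituting into the first identity gives $|\nabla v_\Omega|=g/K=\frac{R}{c_Nd_\Omega(d_\Omega+R)}g$, which is item~2. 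Apart from this equality analysis --- the genuine obstacle, since one must deduce pointwise equality from equal integrals and use the Hopf nonvanishing to discard the set where $|\nabla v_\Omega|$ could degenerate --- every step is a direct call to Green's formula, Remark 3.1, and the classical Serrin result.
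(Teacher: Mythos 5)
Your proposal is correct and follows exactly the route the paper intends: the pointwise chain $g\le|\nabla u_{\Omega}||\nabla v_{\Omega}|\le c_N\frac{d_\Omega(d_\Omega+R)}{R}\,|\nabla v_{\Omega}|$ on $\partial\Omega$, Green's formula $\int_{\partial\Omega}|\nabla v_{\Omega}|=\int_{\Omega}u_{\Omega}$, and the strict-inequality-versus-equality dichotomy feeding the sufficient condition of Remark 3.1, which is the same mechanism the paper uses in Propositions 3.7--3.9 and 4.1--4.4. The paper states this proposition inside a final remark without writing out the proof, and your equality-case analysis (pointwise equality from equal integrals of ordered continuous integrands, Hopf's lemma to guarantee $|\nabla v_{\Omega}|>0$ so the factor cancels, then Serrin's theorem for the ball) supplies precisely the details the paper leaves implicit.
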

\end{remark}
\begin{remark}
  In \cite{cm}, the authors gave the following inequality based on Relly's identity: for $u_{\Omega}$ solution of $P(\Omega,1)$
  $$(\int_{\partial\Omega}|\nabla u_{\Omega}|)^2\leq\int_{\partial\Omega}1/H_{\Omega}\int_{\Omega}(1-|\nabla u_{\Omega}|^2).$$
  We can deduce the following
  \begin{proposition}
    If $|\nabla u_{\Omega}|\geq g$ on $\partial\Omega$, then
    \begin{enumerate}
      \item either $\mathcal{QS}(1-|\nabla u_{\Omega}|^2,\frac{\int_{\partial\Omega}|\nabla u_{\Omega}|}{\int_{\partial\Omega}1/H_{\Omega}})g)$
      \item or $\Omega$ is a ball ($|\nabla u_{\Omega}|=1/H_{\Omega}$ on $\partial\Omega$).
    \end{enumerate}
  \end{proposition}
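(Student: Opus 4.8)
The plan is to reproduce the either/or mechanism used repeatedly above in the Cauchy--Schwarz propositions: take the cited Reilly-type inequality, combine it with the hypothesis $g\le|\nabla u_\Omega|$ to produce an integral inequality of exactly the form demanded by Remark 3.1, and then split according to whether that inequality is strict. Throughout I use that $\Omega$ is convex, so that $H_\Omega>0$, the constant $\int_{\partial\Omega}1/H_\Omega$ is positive and finite, and $\text{conv}(\Omega)=\Omega$.

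First I would start from
$$\left(\int_{\partial\Omega}|\nabla u_\Omega|\right)^2\le\int_{\partial\Omega}\frac1{H_\Omega}\int_\Omega\bigl(1-|\nabla u_\Omega|^2\bigr),$$
divide by the positive constant $\int_{\partial\Omega}1/H_\Omega$, and then invoke $g\le|\nabla u_\Omega|$ on $\partial\Omega$, which gives $\int_{\partial\Omega}g\le\int_{\partial\Omega}|\nabla u_\Omega|$. Replacing one of the two factors $\int_{\partial\Omega}|\nabla u_\Omega|$ in the left member by the smaller $\int_{\partial\Omega}g$ yields the chain
$$\frac{\int_{\partial\Omega}|\nabla u_\Omega|}{\int_{\partial\Omega}1/H_\Omega}\int_{\partial\Omega}g\;\le\;\frac{\bigl(\int_{\partial\Omega}|\nabla u_\Omega|\bigr)^2}{\int_{\partial\Omega}1/H_\Omega}\;\le\;\int_\Omega\bigl(1-|\nabla u_\Omega|^2\bigr).$$
Pulling the constant into the boundary integral rewrites the outer inequality as $\int_{\partial\Omega}\bigl(\frac{\int_{\partial\Omega}|\nabla u_\Omega|}{\int_{\partial\Omega}1/H_\Omega}g\bigr)\le\int_\Omega(1-|\nabla u_\Omega|^2)$, which is precisely the comparison of Remark 3.1 with source $F=1-|\nabla u_\Omega|^2$ and boundary datum $G=\frac{\int_{\partial\Omega}|\nabla u_\Omega|}{\int_{\partial\Omega}1/H_\Omega}g$ on $C_f=\Omega$.

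Since the left member is always $\le$ the right member, exactly two situations occur. If the inequality is strict, Remark 3.1 produces a solution of $\mathcal{QS}\bigl(1-|\nabla u_\Omega|^2,\frac{\int_{\partial\Omega}|\nabla u_\Omega|}{\int_{\partial\Omega}1/H_\Omega}g\bigr)$ strictly containing $\Omega$, giving item 1. Otherwise equality holds throughout; then the middle term equals the right term, so the Reilly-type inequality itself is saturated, and its rigidity (the equality case of \cite{cm}, read through Lemma 2.2) forces $\Omega$ to be a ball with $|\nabla u_\Omega|=1/H_\Omega$ on $\partial\Omega$, giving item 2.

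The main obstacle is this equality case: all the steps above are routine manipulation, but the passage from saturation of the inequality to ``$\Omega$ is a ball'' rests entirely on the rigidity attached to Reilly's identity in \cite{cm} together with the ball characterization of Lemma 2.2. A secondary technical point is to confirm that $1-|\nabla u_\Omega|^2$ is an admissible source (nonnegative and not identically zero on $\Omega$) so that Remark 3.1 may legitimately be invoked.
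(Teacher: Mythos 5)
Your proposal follows essentially the same route as the paper, which simply cites the Ciraolo--Maggi inequality, deduces the strict-inequality case from the sufficient condition of Remark 3.1, and attributes the equality case to Lemma 2.2. You have correctly reconstructed and filled in that sketch, and your two flagged caveats (rigidity in the saturation case and nonnegativity of the source $1-|\nabla u_{\Omega}|^2$) are precisely the points the paper leaves implicit.
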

  The second item is due to Lemma 2.2 above.
\end{remark}
By Pohozaev's identity, $$(N+2)\int_{\Omega}u_{\Omega}=\int_{\partial\Omega}|\nabla u_{\Omega}|)^2x.\nu.$$
Since $|\nabla u_{\Omega}|)\geq r$ on $\partial\Omega$, then
\begin{enumerate}
     \item either $\mathcal{QS}(u_\Omega,r^2|\Omega|/(N+2))$
     \item or $\Omega$ is a ball ($|\nabla u_{\Omega}|=r$ on $\partial\Omega$)
 \end{enumerate}
 Observe that according to the same identity, $\mathcal{QS}(u_\Omega,r^2(x.\nu)/(N+2))$ has a solution.

Mohammed Barkatou\\
ISTM, Departement of Mathematics\\
barkatou.m@ucd.ac.ma\\
Chouaib Doukkali University, Morocco\\


\begin{thebibliography}{2}
\bibitem{ac}  H. W. Alt and L. A. Caffarelli: Existence and regularity for a minimum problem with free boundary,
\emph{J. reine angew. Math.}, \textbf{325}, 1981, 434--448

\bibitem{ba1}  M. Barkatou: Some geometric properties for a class
of non Lipschitz-domains, \emph{New York J. Math.}, \textbf{8},
2002, 189--213.

\bibitem{ba2}  M. Barkatou: Necessary and Suffisant Condition of Existence
for the Quadrature Surfaces Free Boundary Problem , \emph{JMR}, \textbf{2} (4),
2010, 93--99.

\bibitem{ba3}  M. Barkatou: Existence and symmetry results for some overdetermined free boundary problems , \emph{Applied Sciences}, \text{Vol. 22}, 2020, pp. 33--51.

\bibitem{bsl}  M. Barkatou, D. Seck and I. Ly: An
existence result for a quadrature surface free boundary problem,
\emph{Cent. Eur. Jou. Math.}, \textbf{3}(1), 2005, 39--57.

\bibitem{bk1}  M. Barkatou and S. Khatmi: Symmetry result for some overdetermined value problems,
\emph{ANZIAM J.}, \textbf{49}, 2008, 479--494.

\bibitem{bk}  M. Barkatou and S. Khatmi: Existence of quadrature surfaces for
uniform density supported by a segment,
\emph{Applied Sciences}, \textbf{10}, 2008, 38--47.

\bibitem{be}  A. Beurling: On free boundary problem for the Laplace equation,
\emph{Sem. Anal. Funct., Inst. Adv. Study Princeton}, \textbf{1},
1957, 248--263

\bibitem{bh}  F. Brock and A. Henrot: A symmetry result for an overdetermined elliptic problem
using continuous rearrangement and domain derivative,
\emph{Rend. Circ. Mat. Palermo}, \textbf{51}, 2002, 375--390.

\bibitem{bal}  D. Bucur, V. Ferone, C. Nitsch and C. Trombetti: Weinstock inequality in higher dimensions
\emph{J. Differential Geom.}, \textbf{118} (1), May 2021, 1--21. https://doi.org/10.4310/jdg/1620272940

\bibitem{bz}  D. Bucur and J. P. Zolesio: N-dimensional shape optimization under capacitary constraints,
\emph{J. Diff. Eq.}, \textbf{123-2}, 1995, 504--522.

\bibitem{bt}  D. Bucur and P. Trebeschi: Shape Optimization
Problems Governed by Nonlinear State Equations, \textsl{Proc.Roy.Sc.Edinburgh%
}, vol. \textbf{128A}, 1998, pp. 945-963
\bibitem{ca}  T. Carleman: \"{U}ber ein Minimumproblem der mathematischen Physik,
\emph{Math. Z.}, \textbf{1}, 1918, 208--212

\bibitem{hc}  M. Choulli and A. Henrot: Use of domain derivative to prove symmetry results
in partial differential equations,
\emph{Math. Nachr.}, \textbf{192}, 1998, 91--103.

\bibitem{cm}  G. Ciraolo and F. Maggi: On the Sharpe of Compact Hypersurfaces with Almost-Constant Mean Curvature,
\emph{Comm. Pure. App. Mat.}, \textbf{0001-0052}, 2016, Preprint.

\bibitem{d}  J. Dalphin. Uniform ball condition and existence of optimal shapes for geometric functionals involving boundary-value problems \emph{2017 hal-01456344v2.}

\bibitem{fr}  K. Friedrichs: \"{U}ber ein Minimumproblem f\"{u}r Potentialstr\"{o}mungen mit freiem Rand,
\emph{Math. Ann.}, \textbf{109}, 1934, 60--82

\bibitem{fm}  S. J. Fromm and P. Mcdonald: A symmetry problem
from probability. \emph{ Proc. Amer. Math. Soc.}, \textbf{125},
1997, 3293--3297.

\bibitem{gwn}  G. Gidas, Wei-Ming Ni and L. Nirenberg: Symmetry
and related properties via the maximum principle. \emph{Comm.
Math. Phys.}, \textbf{68}, 1979, 209--300.

\bibitem{gs} B. Gustafsson and H. Shahgholian: Existence and geometric
properties of sokutions of a free boundary problem in potential theory,
\emph{J. f\"{u}r die Reine und Ang. Math.}, \textbf{473}, 1996, 137--179.

\bibitem{he} A. Henrot: Subsolutions and supersolutions in a free boundary problem, \emph{Arkiv f\"{o}r
Math.}, Vol. 32(1), 1994, pp. 79--98.

\bibitem{hp}  A. Henrot and M. Pierre: Variation et Optimisation de forme,une analyse géométrique, \emph{Mathématiques et Applications},
2005, Vol. \text\bf{48} Springer.

\bibitem{hm} C. Huang and D. Miller: Domain functionals and exit times for Brownian motion.
\emph{\ Proc. Amer. Math. Soc.}, \textbf{130}(3), 2001, 825--831.

\bibitem{kb} S. Khatmi and M. Barkatou: On some overdetermined free boundary problems.
\emph{\ ANZIAM J.}, \textbf{49}(E), 2007, E11--E32.


\bibitem{mp}  R. Magnanini and G. Poggesi: On the stability for Alexandrov's Soap Bubble theorem, \emph{JAMA.},
\textbf{139}, 2019, 179--205.

\bibitem{ps}  L. E. Payne and P. W. Schaefer: On overdetermined boundary value problems for the biharmonic operator, \emph{J. Math. Anal. and Appl.}, \textbf{187}, 1994, 598--616.

\bibitem{pi}  O. Pironneau: Optimal shape design for
elliptic systems, \emph{Springer Series in Computational Physics},
1984, Springer, New York.

\bibitem{Re}  F. Rellich: Darstellung der Eigenwerte
$\triangle u+\lambda u$ durch einem Randintegral , \emph{Math.
Z.}, \textbf{46}, 1940, 635--646.

\bibitem{se}  J. Serrin: A symmetry problem in potential theory, \emph{Arch. Rational Mech. Anal.},
\textbf{43}, 1971, 304--318.

\bibitem{si}  V. G. Sigillito: Explicit a priori inequalities with applications to boundary value problems, \emph{Pitman Publishing},
1977, London, San Francisco, Melbourne.

\bibitem{sz} J. Sokolowski and J. P. Zolesio: Introduction
to shape optimization: shape sensitivity analysis, \emph{Springer
Series in Computational Mathematics}, \textbf{10}, 1992, Springer,
Berlin.

\bibitem{val} M. Van den Berg, G. Buttazzo and B. Velichkov: Optimization Problems Involving the First Dirichlet Eigenvalue and the Torsional Rigidity, \emph{New Trends in Shape Optimization}, \textbf{166}, 2015, Aldo Pratelli G\"{u}nter Leugering Editors, Birkh\"{a}user,

\end{thebibliography}
\end{document}